\theoremstyle{plain}
\newtheorem{thm}{Theorem}[section]
\newtheorem{theorem}[thm]{Theorem}
\newtheorem{mthm}[thm]{Main Theorem}
\newtheorem{cor}[thm]{Corollary}
\newtheorem{corollary}[thm]{Corollary}
\newtheorem{lm}[thm]{Lemma}
\newtheorem{lemma}[thm]{Lemma}
\newtheorem{prop}[thm]{Proposition}
\newtheorem{proposition}[thm]{Proposition}
\theoremstyle{definition}
\newtheorem{de}[thm]{Definition}
\newtheorem{exm}[thm]{Example}
\newtheorem{rem}[thm]{Remark}
\newtheorem{remark}[thm]{Remark}
\newtheorem{example}[thm]{Example}
\newcommand{\Z}{\mathbb{Z}}
\newcommand{\A}{\mathcal{A}}
\newcommand{\im}{\mathrm{Im}}
\newcommand{\zie}[1]{{\color{green}{#1}\color{black}{}}}
\newcommand{\aut}[1]{\mathrm{Aut}(#1)}
\newcommand{\dis}[1]{\mathrm{Dis}(#1)}
\newcommand{\lmlt}[1]{\mathrm{LMlt}(#1)}
\newcommand{\ld}{\backslash}
\newcommand{\orb}[2]{\mathrm{Orb}_{#1}(#2)}
\newcommand{\Ker}{\mathop{\mathrm{Ker}}}
\newcommand{\siq}{\mathop{\mathcal{SIQ}}}
\numberwithin{equation}{section}
\begin{document}

\title{Subdirectly irreducible medial quandles}

\author{P\v remysl Jedli\v cka}
\author{Agata Pilitowska}
\author{Anna Zamojska-Dzienio}

\address{(P.J.) Department of Mathematics, Faculty of Engineering, Czech University of Life Sciences, Kam\'yck\'a 129, 16521 Praha 6, Czech Republic}
\address{(A.P., A.Z.) Faculty of Mathematics and Information Science, Warsaw University of Technology, Koszykowa 75, 00-662 Warsaw, Poland}

\email{(P.J.) jedlickap@tf.czu.cz}
\email{(A.P.) apili@mini.pw.edu.pl}
\email{(A.Z.) A.Zamojska-Dzienio@mini.pw.edu.pl}

\thanks{While working on this paper, the second and third authors were supported by the
Statutory Grant of Warsaw University of Technology
504/02476/1120.}

\keywords{Quandles, medial quandles, modes, reductive algebras, subdirectly irreducible algebras.}
\subjclass[2010]{Primary: 08B26, 15A78. Secondary: 08A30, 03C05, 08A05, 20N02.}

\date{\today}

\begin{abstract}
We describe all subdirectly irreducible medial quandles. We show that they fall within one of four disjoint classes. In particular, in the finite case they are either connected
(and therefore Alexander quandles) or reductive. Moreover, we provide a representation of all non-connected subdirectly
irreducible medial quandles.
\end{abstract}

\maketitle

\section{Introduction}\label{sec1}
An algebra $(Q,\ast,\ld)$, with two basic binary operations $\ast$ and $\ld$, is called a \emph{quandle} if the
following conditions hold, for every $x,y,z\in Q$:
\begin{enumerate}
\item $x\ast x=x$,
\item $x\ast(y\ast z)=(x\ast y)\ast(x\ast z)$,
\item $x\ld (x\ast y)=y=x\ast (x\ld y)$.
\end{enumerate}
The operations $\ast$ and $\ld$ are called: \emph{multiplication} and \emph{left division}, respectively.  Condition $(1)$ says that multiplication is \emph{idempotent} and $(2)$ says it is \emph{left distributive}. Conditions $(3)$ define \emph{left quasigroup} property i.e., the equation $x\ast u=y$ has a unique solution $u\in Q$. It easily follows that left division is idempotent, too.
A quandle $Q$ is called \emph{medial} if, for every $x,y,u,v\in Q$,
\begin{align*}
(x\ast y)\ast(u\ast v)&=(x\ast u)\ast(y\ast v),\\
(x\ld y)\ld(u\ld v)&=(x\ld u)\ld(y\ld v),\\
(x\ld y)\ast(u\ld v)&=(x\ast u)\ld(y\ast v).
\end{align*}
Other names for the above property include: entropicity, bi-commutativity, alternation, bisymmetry, and abelianity.
Clearly, the class of all medial quandles forms a variety. A prototypic example of medial quandles is the class of \emph{Alexander quandles}: given a left $\Z[t,t^{-1}]$-module $A$, one defines the quandle over the set $A$ with the operations
\begin{align*}
x*y=(1-t)\cdot x+t\cdot y\;\;\text{and}\;\; x\ld y=(1-t^{-1})\cdot x+t^{-1}\cdot y.
\end{align*}
Alternatively, Alexander quandles can be regarded as pairs $(A,f)$, where $(A,+)$ is an abelian group, $f$ its automorphism and operations are given by
\begin{align*}
x*y=x-f(x)+f(y)=(1-f)(x)+f(y)\;\;\text{and}\;\; x\ld y=(1-f^{-1})(x)+f^{-1}(y).
\end{align*}
The variety of medial quandles is generated by Alexander quandles \cite{JPZ17}. Following universal algebra terminology, quandles embeddable into Alexander quandles (as subreducts of modules) will be called \emph{quasi-affine}.

This paper continues the research on medial quandles we started in \cite{JPSZ}. The primary significance of the quandle laws is that they are algebraic interpretations of the Reidemeister moves in knot theory \cite{Joy}. One of the strongest and oldest knot invariant is the \emph{Alexander polynomial}. The simplest way to understand it is in terms of Alexander quandles \cite[Chapter 3]{EN}. Similar role in case of another famous invariant given by \emph{Jones-Conway polynomial} play \emph{Conway algebras} (medial left quasigroups) \cite{Przyt}. Furthermore, most of quandles with a small number of elements are medial quandles \cite{JPSZ}.

Here, our aim is to further develop the structure theory of medial quandles
and to apply the theory to classify all subdirectly irreducible medial quandles.

An algebra is called \emph{simple} if it has exactly two congruence relations (i.e., equivalence relations invariant with respect to the operations).
Finite simple quandles were classified independently in \cite{AG,J82b}. The only two-element simple quandle is a \emph{projection} quandle satisfying the identity $x\ast y\approx y$. Since the orbit decomposition provides a congruence, simple quandles with more than two elements must be connected, hence, in the medial case, they must be Alexander quandles \cite[Theorem 7.3]{HSV}. As a special case of the classification, we obtain that
a finite medial quandle $Q$ is simple if and only if $Q$ is the Alexander quandle $(\Z_p^k,M)$ where $p$ is a prime and $M$ is the companion matrix of an irreducible monic polynomial in $\mathbb F_p[t]$.

An algebra $A$ is called a \emph{subdirect product} of algebras $S_i$, $i\in I$, if it embeds into the direct product $\prod_{i\in I} S_i$ in a way that every projection $A\to S_i$ is onto. An algebra $S$ is called \emph{subdirectly irreducible} (SI) if it admits no non-trivial subdirect representation. Birkhoff's theorem says that every algebra in a variety $\mathcal V$ embeds in a subdirect product of SI algebras from $\mathcal V$. Therefore, knowledge of SI algebras in a given variety provides a powerful tool for representation.
An easy-to-use criterion of subdirect irreducibility is provided by the following: an algebra $S$ is subdirectly irreducible if and only if the intersection of all non-trivial congruences, called the \emph{monolith congruence}, is non-trivial. In particular, every simple algebra is SI. See \cite[Section 3.3]{Ber} for details.

Here we summarize previous work on subdirectly irreducible medial quandles. On one hand, there are two papers which deal with special cases:
Roszkowska \cite{R99b} gave an explicit construction of all finite SI medial quandles that are involutory (2-symmetric), and Romanowska and Roszkowska \cite{RR89} did the same for the finite 2-reductive ones. Note that, in both papers, quandles were considered as algebras with one basic binary operation of multiplication; nevertheless the operation $\ld$ was implicitly present there anyway. It is well-known that for so-called $n$-symmetric (and therefore for all finite) quandles, the operation $\ld$ need not be taken into consideration since it is defined by means of multiplication (see e.g. \cite[Section 8.6]{RS} or \cite{St08}). In our previous paper \cite{JPSZ}, the operation $\ld$ was not taken into account, either. The reason was that
we worked with some defining structures rather than the operations themselves and
both the operations were actually implicitly present in the structure.

In a broader perspective, medial quandles are examples of \emph{modes} \cite{RS}, idempotent algebras
with a commutative clone of term operations. Kearnes \cite{K99} classifies SI modes according to the algebraic properties of blocks of their monolith. An SI mode $S$ has precisely one of the following three types:
\begin{itemize}
	\item the \emph{set type}: the clone of each monolith block is a clone of projections;
	\item the \emph{quasi-affine type}: the clone of each monolith block has either a cancellative binary operation or the clone is generated by $x+y+z\;\text{mod}\;2$;
	\item the \emph{semilattice type}: the clone of each monolith block has a non-cancellative essentially binary operation.
\end{itemize}
It is also shown that the SI mode is of the semilattice type if and only if it has a semilattice term \cite[Theorem 2.3]{K99}, and it is of the quasi-affine type if and only if each monolith block is a quasi-affine algebra which is not term equivalent to a set \cite[Theorem 2.12]{K99}.

The semilattice type is well understood \cite{K95} but cannot appear in quandles: the monolith of an SI semilattice mode $S$ has exactly one non-trivial block $M$ which consists of two special elements \cite[Theorem 3.1]{K95}. By idempotency, each block is a subalgebra of $S$. For quandles, $M$ is a subquandle of $S$ and left cancellativity implies that the clone of $M$ is a clone of projections.

Algebras of quasi-affine type are related to quasi-affine algebras
and questions about their properties are often reduced to module-theoretical questions.
SI medial quandles of quasi-affine type are presented in Section \ref{sec:si}.

In general, very little is known about set type SI modes.
Besides \cite{RR89}, a notable exception is \cite{St}, a classification of 2-reductive SI modes (they are all of set type).
Our paper fills partially the gap. We show that each SI medial quandle of the set type is either quasi-reductive or a two-element projection quandle.

The construction of all non-connected quasi-reductive SI medial quandles (Theorem \ref{thm1:subdir}) together with Main Theorem \ref{all_si} gives actually a complete description of all SI medial quandles.

It is also interesting to note that classification of SI racks (\emph{non-idempotent} quandles) uses substantially different techniques, see \cite{JKS,St08}.

The paper is organized as follows. In Section \ref{sec:decomposition theorem} some notions and results from \cite{JPSZ}
are recalled, in particular the representation of medial quandles as sums of affine meshes.
Each medial quandle can be constructed from abelian groups which are naturally equipped with the structure
of a $\Z[t,t^{-1}]$-module. In Section~\ref{sec:congruences on quandles} we describe a relationship between congruences
of a medial quandle that are below the orbit decomposition and submodules of the modules from which it is built.
Then, in Section \ref{sec:si}, we present SI medial quandles of quasi-affine type and we show that every finite SI medial quandle is either reductive or connected (thus an Alexander quandle).
In Section \ref{sec:reductivity} we develop more structure theory of reductive medial quandles.
In Section~\ref{sec:set-type} we analyze SI medial quandles of the set type. In particular, Main Theorem \ref{all_si} characterizes all SI medial quandles.
In Section \ref{sec:subdirectly irreducible finite reductive quandles} we give an explicit construction of non-connected
SI medial quandles of the set type (Theorem \ref{thm1:subdir}, Corollary \ref{cor:SI_red})
and also provide several examples of SI medial quandles.
In Section~\ref{sec:isomorphism} we tackle the question of isomorphisms between the quandles constructed
in the section before.
In Section~\ref{sec:classification} we present a classification of SI medial quandles in Theorem \ref{thm:all_class}. Next, we describe all infinite SI 2-reductive medial quandles (Theorem \ref{thm:all2red}) and all infinite SI involutory medial quandles (Theorem \ref{th:inf_inv}), which completes the classification given by Romanowska and Roszkowska.

\subsection*{Notation and basic terminology} The identity bijection
will always be denoted by 1 and the zero group homomorphism by $0$.
For two bijections $\alpha,\beta$, we write $^\beta\alpha=\beta\alpha\beta^{-1}$.
The commutator is defined as $[\beta,\alpha]=(^\beta\alpha)\alpha^{-1}$. If a group $G$ acts on a set $X$
then
the stabilizer of $e\in X$ will be denoted by~$G_e$.

When studying left quasigroups, as important tools we use the mappings $L_e\colon x\mapsto e\ast x$,
called the \emph{left translations}. We use also the right translations $R_e\colon x\mapsto x\ast e$.
The idempotency and the mediality imply that both $L_e$ and $R_e$ are endomorphisms.
The left quasigroup property means that $L_e$ is an automorphism. A quandle is called \emph{latin} (or a
\emph{quasigroup}), if the right translations, $R_e$, are bijective,
too.

We will often use the following observation: for every $a\in Q$ and $\alpha\in\aut Q$,
\begin{equation}
^\alpha L_a=L_{\alpha(a)}.\label{auto}
\end{equation}

A quandle $(Q,\ast,\ld)$ is $m$-\emph{reductive}, if it satisfies the identity
\begin{equation*}
(((x\ast\underbrace{y)\ast y)\ast\dots )\ast y}_{m-\text{times}}\approx y.
\end{equation*}
A quandle $Q$ is called \emph{reductive}, if it is $m$-reductive for some $m$ and $Q$ is \emph{strictly $m$-reductive} if it is $m$-reductive and not $k$-reductive for any $k<m$. In particular, every Alexander quandle $(A,1)$ is 1-reductive and is a projection quandle.

\section{Orbit decomposition}\label{sec:decomposition theorem}

In this section, we recall notions and results from \cite{JPSZ}
on representing medial quandles as sums of affine meshes.
We start with the definition of important permutation groups acting on~$Q$.

\begin{de}
The (left) \emph{multiplication group} of a quandle $Q$ is the
permutation group generated by left translations, i.e.,
\begin{equation*}
\lmlt Q=\langle L_a\mid a\in Q\rangle\leq \mathrm{Aut}(Q).
\end{equation*}
We define the \emph{displacement group} as the subgroup
\begin{equation*}
\dis Q=\langle L_aL_b^{-1}\mid a,b\in Q\rangle.
\end{equation*}
\end{de}
It follows that $\dis Q=\{L_{a_1}^{k_1}\dots L_{a_n}^{k_n}\mid a_1,\dots,a_n\in Q\text{ and }\sum_{i=1}^n k_i=0\}.$
Both groups act naturally on~$Q$ and
it was proved in~\cite[Proposition 2.1]{HSV} that $\lmlt{Q}$ and $\dis Q$ have the same orbits of action.
We refer to the orbits of transitivity of the groups $\lmlt Q$ and $\dis Q$ simply as \emph{the orbits of $Q$}, and denote $$Qe=\{\alpha(e)\mid\alpha\in\lmlt Q\}=\{\alpha(e)\mid\alpha\in\dis Q\}$$ the orbit containing an element $e\in Q$.
Notice that orbits are subquandles of $Q$.

One of the main results of~\cite{HSV} was that every orbit of a quandle~$Q$ admits a certain group
representation, called a \emph{homogeneous representation}, based on~$\dis Q$. In particular,
if $Q$ has only one orbit (such a quandle is called \emph{connected})
then the homogeneous representation based on $\dis{Q}$ is, in a sense, minimal
such a representation of the quandle~$Q$.

This article deals with medial quandles. From the group-theoretical point of view, the importance
of medial quandles comes from the fact that $\dis Q$ is abelian.

\begin{proposition}\cite[Section 1]{J82b}
 Let~$Q$ be a quandle. Then $Q$ is medial if and only if $\dis Q$ is commutative.
\end{proposition}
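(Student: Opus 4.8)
The plan is to translate both the mediality laws and the commutativity of $\dis Q$ into statements about products of left translations, using the dictionary $a\ast b=L_a(b)$, $a\ld b=L_a^{-1}(b)$, together with the conjugation formula
\begin{equation*}
L_{a\ast b}=L_{L_a(b)}={}^{L_a}L_b=L_aL_bL_a^{-1},
\end{equation*}
which is an instance of \eqref{auto} since $L_a\in\aut Q$; applying \eqref{auto} with $\alpha=L_a^{-1}$ gives likewise $L_{a\ld b}=L_a^{-1}L_bL_a$. I will also use the elementary fact that, for any fixed $e\in Q$, the set $\{L_aL_e^{-1}\mid a\in Q\}$ generates $\dis Q$; this follows from $L_aL_b^{-1}=(L_aL_e^{-1})(L_bL_e^{-1})^{-1}$ together with the description of $\dis Q$ recalled above.

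For the forward implication, assume $Q$ is medial. Rewriting the first mediality law $(x\ast y)\ast(u\ast v)=(x\ast u)\ast(y\ast v)$ via the dictionary gives $L_xL_yL_x^{-1}L_u(v)=L_xL_uL_x^{-1}L_y(v)$ for all $v$; cancelling the bijection $L_x$ on the left and then multiplying on the right by $L_x^{-1}$ yields
\begin{equation*}
(L_yL_x^{-1})(L_uL_x^{-1})=(L_uL_x^{-1})(L_yL_x^{-1})
\end{equation*}
for all $x,y,u$. Fixing $x=e$, the generators $L_yL_e^{-1}$ of $\dis Q$ pairwise commute, and a group generated by pairwise commuting elements is abelian; hence $\dis Q$ is commutative. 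Only the first of the three mediality laws is needed here.

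For the converse, assume $\dis Q$ is commutative. Each of the three mediality laws becomes, after applying the same dictionary, the assertion that two specific elements of $\dis Q$ commute. For the first law this is exactly the displayed identity above, read backwards. For the second law $(x\ld y)\ld(u\ld v)=(x\ld u)\ld(y\ld v)$, substituting $L_{a\ld b}=L_a^{-1}L_bL_a$ and cancelling reduces it to $(L_y^{-1}L_x)(L_u^{-1}L_x)=(L_u^{-1}L_x)(L_y^{-1}L_x)$, and both factors lie in $\dis Q$. For the mixed law $(x\ld y)\ast(u\ld v)=(x\ast u)\ld(y\ast v)$, the same substitutions rewrite the two sides as $(L_x^{-1}L_y)(L_xL_u^{-1})$ and $(L_xL_u^{-1})(L_x^{-1}L_y)$, again a pair of commuting elements of $\dis Q$. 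Since $\dis Q$ is abelian, all three equalities hold and $Q$ is medial.

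The only real work is the bookkeeping in these reductions: one must keep careful track of which composite of translations appears on each side, verify in each case that the relevant elements have total translation-exponent zero (so that they belong to $\dis Q$), and confirm the generation claim so that pairwise commutativity of the generators $L_aL_e^{-1}$ propagates to all of $\dis Q$. No genuinely hard step arises once the conjugation formula \eqref{auto} and the correspondence $a\ld b=L_a^{-1}(b)$ are in hand.
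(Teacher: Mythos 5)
Your proof is correct: the conjugation identities $L_{a\ast b}=L_aL_bL_a^{-1}$ and $L_{a\ld b}=L_a^{-1}L_bL_a$ (both instances of \eqref{auto}), the reduction of each of the three medial laws to a commutation of two exponent-sum-zero words in left translations, and the generation of $\dis Q$ by $\{L_aL_e^{-1}\mid a\in Q\}$ all check out, and the reductions are reversible since they consist only of multiplications by invertible maps. The paper gives no proof of its own here --- it cites the result to Joyce --- and your argument is precisely the standard translation argument underlying that citation, with the worthwhile extra observation that the first medial law alone already forces $\dis Q$ to be abelian, so that in any quandle the second and third medial laws are consequences of the first.
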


As we said, every orbit of a medial quandle~$Q$ admits a homogeneous representation
based on~$\dis Q$ and the fact that $\dis Q$ is abelian implies that the representation
actually reduces to the definition of an Alexander quandle.
The main result of~\cite{JPSZ} was a structural description of medial quandles
based on their orbits -- the tool we used to reconstruct the whole quandle
from its orbits was the affine mesh.

\begin{de}
An \emph{affine mesh} over a non-empty set $I$ is a triple
$$\mathcal A=((A_i)_{i\in I};\,(\varphi_{i,j})_{i,j\in I};\,(c_{i,j})_{i,j\in I})$$ where $A_i$ are abelian groups, $\varphi_{i,j}:A_i\to A_j$ homomorphisms, and $c_{i,j}\in A_j$ constants, satisfying the following conditions for every $i,j,j',k\in I$:
\begin{enumerate}
    \item[(M1)] $1-\varphi_{i,i}$ is an automorphism of $A_i$;
    \item[(M2)] $c_{i,i}=0$;
    \item[(M3)] $\varphi_{j,k}\varphi_{i,j}=\varphi_{j',k}\varphi_{i,j'}$, i.e., the following diagram commutes:
$$\begin{CD}
A_i @>\varphi_{i,j}>> A_j\\ @VV\varphi_{i,j'}V @VV\varphi_{j,k}V\\
A_{j'} @>\varphi_{j',k}>> A_k
\end{CD}$$
    \item[(M4)] $\varphi_{j,k}(c_{i,j})=\varphi_{k,k}(c_{i,k}-c_{j,k})$.
\end{enumerate}
If the index set is clear from the context, we shall write briefly $\mathcal A=(A_i;\varphi_{i,j};c_{i,j})$. Moreover, if $I$ is a finite set we will sometimes display an affine mesh as a triple $((A_i)_{i\in I};\,\Phi;C)$ where $\Phi=(\varphi_{i,j})_{i,j\in I}$ and $C=\,(c_{i,j})_{i,j\in I}$ are $\left|I\right|\times \left|I\right|$ matrices.
\end{de}

\begin{de}
The \emph{sum of an affine mesh} $(A_i;\varphi_{i,j};c_{i,j})$ over a set $I$ is an algebra defined on the disjoint union of the sets $A_i$,
with two operations
\begin{align*}
a\ast b&=c_{i,j}+\varphi_{i,j}(a)+(1-\varphi_{j,j})(b),\\
a\ld b&=(1-\varphi_{j,j})^{-1}(b-\varphi_{i,j}(a)-c_{i,j}),
\end{align*}
for every $a\in A_i$ and $b\in A_j$.
\end{de}

It was proved in \cite[Lemma 3.8]{JPSZ} that the sum of any affine mesh is a medial quandle.
Every summand $A_i$ becomes a subquandle of the sum, and for $a,b\in A_i$ we have
\begin{align*}
a\ast b&=\varphi_{i,i}(a)+(1-\varphi_{i,i})(b),\\
a\ld b&=(1-(1-\varphi_{i,i})^{-1})(a)+(1-\varphi_{i,i})^{-1}(b),
\end{align*}
hence $(A_i,\ast,\ld)$ is the Alexander quandle $(A_i,1-\varphi_{i,i})$.
Moreover, every summand turns out to be a union of orbits. If we want every summand to be a single orbit,
we have to add the indecomposability condition.

\begin{de}
An affine mesh $(A_i;\varphi_{i,j};c_{i,j})$ over a set $I$ is called \emph{indecomposable} if
$$A_j=\left<\bigcup_{i\in I}\left(c_{i,j}+\im(\varphi_{i,j})\right)\right>,$$ for every $j\in I$.
Equivalently, the group $A_j$ is generated by all the elements $c_{i,j}$, $\varphi_{i,j}(a)$ with $i\in I$ and $a\in A_i$.
\end{de}

\begin{theorem}\cite[Theorem 3.14]{JPSZ}\label{thm:decomposition}
An algebra is a medial quandle if and only if it is the sum of an indecomposable affine mesh. The orbits of the quandle coincide with the groups of the mesh.
\end{theorem}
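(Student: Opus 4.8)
The plan is to prove the two implications separately, using the already-cited \cite[Lemma 3.8]{JPSZ} that the sum of any affine mesh is a medial quandle. With that in hand, only two things remain: first, that for an \emph{indecomposable} mesh the summands $A_i$ are exactly the orbits of the sum; and second, the converse, that every medial quandle is the sum of some indecomposable mesh. I would treat the orbit statement for the indecomposable case first, as it also bootstraps the verification of indecomposability in the converse.

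For the orbit statement I would compute directly with the displacement group. Each summand $A_j$ is a union of orbits, so it suffices to show each $A_j$ is a single orbit; moreover every element of $\lmlt Q$ fixes each orbit setwise, and $a\ast b\in A_j$ whenever $b\in A_j$, so the action stays inside $A_j$. I would then evaluate a generator $L_aL_b^{-1}$ of $\dis Q$ on an arbitrary element of $A_j$: substituting the two displayed formulas for $\ast$ and $\ld$, one finds that $L_aL_b^{-1}$ restricts to $A_j$ as the translation by $(c_{i,j}+\varphi_{i,j}(a))-(c_{i',j}+\varphi_{i',j}(b))$, where $a\in A_i$ and $b\in A_{i'}$. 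Hence $\dis Q$ acts on $A_j$ by translations whose translation parts generate the subgroup spanned by all differences of the elements $c_{i,j}+\varphi_{i,j}(a)$; since $c_{j,j}+\varphi_{j,j}(0)=0$ is itself among these elements, that subgroup equals $\bigl\langle\bigcup_{i}(c_{i,j}+\im\varphi_{i,j})\bigr\rangle$, which by indecomposability is all of $A_j$. Thus $\dis Q$ is transitive on $A_j$, so $A_j$ is one orbit.

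For the converse, let $Q$ be a medial quandle and decompose it into the orbits $O_i$ of $\dis Q$, which is abelian by the Proposition. I would fix a base point $e_i$ in each orbit and use the homogeneous representation based on $\dis Q$ to make $O_i$ into an abelian group $A_i$ with $e_i$ as neutral element; abelianness of $\dis Q$ turns this orbit into the Alexander quandle $(A_i,1-\varphi_{i,i})$, where $1-\varphi_{i,i}=L_{e_i}|_{A_i}\in\aut{A_i}$, which is exactly (M1). The cross-data are then read off from the translations: since $L_{e_i}\in\aut Q$ restricts to $A_j$ as a quandle automorphism of $(A_j,1-\varphi_{j,j})$, it is affine of the form $b\mapsto c_{i,j}+(1-\varphi_{j,j})(b)$, which defines $c_{i,j}:=e_i\ast e_j\in A_j$ (and (M2) is just idempotency $e_i\ast e_i=e_i$), while the right translation $R_{e_j}$, an endomorphism of $Q$ carrying $A_i$ into $A_j$, yields $\varphi_{i,j}(a):=a\ast e_j-c_{i,j}$.

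The heart of the matter, and the step I expect to be the main obstacle, is the master formula $a\ast b=c_{i,j}+\varphi_{i,j}(a)+(1-\varphi_{j,j})(b)$ for \emph{all} $a\in A_i$ and $b\in A_j$, together with the fact that each $\varphi_{i,j}$ is a genuine group homomorphism; this is precisely where mediality must be used in full. I would obtain it by writing $a=\delta(e_i)$ for some $\delta\in\dis Q$, so that $L_a={}^\delta L_{e_i}$ by \eqref{auto}, and then using that $\dis Q$ acts on $A_j$ by translations to push the constant and linear parts of $L_{e_i}$ through $\delta$; mediality is what forces these parts to combine additively, giving the claimed affine shape and the additivity of $\varphi_{i,j}$. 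Once the cross-multiplication is known to be affine of this form, axioms (M3) and (M4) follow by expanding the two medial reorderings of products of the shape $e_i\ast(e_j\ast e_k)$ and comparing the resulting affine expressions, while indecomposability of the constructed mesh is exactly the orbit statement of the second paragraph read backwards. Everything after the master formula is routine bookkeeping; the real content is showing that mediality makes the mixed multiplication affine.
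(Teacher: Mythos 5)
Your overall architecture coincides with the paper's (the theorem is quoted from \cite[Theorem 3.14]{JPSZ}, whose proof runs through the canonical mesh recalled in Section~\ref{sec:decomposition theorem}): one direction from the cited \cite[Lemma 3.8]{JPSZ} plus a transitivity computation, the converse by defining $c_{i,j}=e_i\ast e_j$ and $\varphi_{i,j}(a)=a\ast e_j-e_i\ast e_j$ over a transversal of the orbits. Your second paragraph is correct and complete: $L_aL_b^{-1}$ does restrict to $A_j$ as the translation by $(c_{i,j}+\varphi_{i,j}(a))-(c_{i',j}+\varphi_{i',j}(b))$ because the linear parts $(1-\varphi_{j,j})$ and $(1-\varphi_{j,j})^{-1}$ cancel, and since $0=c_{j,j}+\varphi_{j,j}(0)$ lies in the generating set, the subgroup generated by the differences equals $\bigl\langle\bigcup_{i}(c_{i,j}+\im(\varphi_{i,j}))\bigr\rangle$, so indecomposability gives transitivity.

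The gap sits exactly at the step you yourself call the heart of the matter, and the one justification you offer there is a non sequitur. You claim that $L_{e_i}$ restricts to $A_j$ ``as a quandle automorphism of $(A_j,1-\varphi_{j,j})$, [hence] affine of the form $b\mapsto c_{i,j}+(1-\varphi_{j,j})(b)$''. Quandle automorphisms of Alexander quandles need not be affine: if the orbit $A_j$ is a projection quandle (i.e.\ $\varphi_{j,j}=0$, which happens in every $2$-reductive example), then \emph{every} permutation of $A_j$ is a quandle automorphism; and even an affine automorphism $b\mapsto g(b)+c$ only requires $g$ to commute with $1-\varphi_{j,j}$, not to equal it. Your fourth paragraph then presupposes this affineness (it ``pushes the constant and linear parts of $L_{e_i}$ through $\delta$''), so mediality is never actually applied anywhere; ``mediality forces these parts to combine additively'' is an expectation, not an argument. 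The missing mechanism is the conjugation identity $^{L_e}\beta=\;^{L_f}\beta$ for all $\beta\in\dis Q$ (Lemma~\ref{lem:conjugation} of this paper), which holds because $L_f^{-1}L_e$ lies in the abelian normal subgroup $\dis Q$ of $\lmlt Q$ and hence centralizes it --- this is precisely where mediality enters. With it, affineness is a two-line computation: for $x=\beta(e_j)\in A_j$ with $\beta\in\dis Q$, equation \eqref{auto} gives $L_{e_i}(x)=(^{L_{e_i}}\beta)(L_{e_i}(e_j))=(^{L_{e_j}}\beta)(c_{i,j})=c_{i,j}+\;^{L_{e_j}}\beta(e_j)=c_{i,j}+t\cdot x$, since elements of $\dis Q$ act on the orbit module by translations and $t\cdot x=\;^{L_{e_j}}\beta(e_j)$ by definition. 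Once this is in place, your conjugation-by-$\delta$ computation does yield $a\ast b=c_{i,j}+(1-t)\cdot\delta(e_j)+t\cdot b$, hence the master formula together with the additivity of $\varphi_{i,j}$, and the remainder of your plan ((M3), (M4) by expanding distributivity, indecomposability as the orbit statement read backwards) goes through.
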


Starting from a medial quandle~$Q$, a natural way to define an indecomposable affine mesh that
sums to~$Q$ is the canonical mesh.

\begin{de}
Let $Q$ be a medial quandle, and choose a transversal $E$ to the orbit decomposition. We define the \emph{canonical mesh} for $Q$ over the transversal $E$ as $\A_{Q,E}=(\orb Qe;\varphi_{e,f};c_{e,f})$ with $e,f\in E$ where for every $x\in Qe$
$$\varphi_{e,f}(x)=x\ast f-e\ast f\quad\text{and}\quad c_{e,f}=e\ast f.$$
\end{de}

\begin{lemma}\cite[Lemma 3.13]{JPSZ}
Let $Q$ be a medial quandle and $\A_{Q,E}$ its canonical mesh. Then $\A_{Q,E}$ is an indecomposable affine mesh and $Q$ is equal to its sum.
\end{lemma}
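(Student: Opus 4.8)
The plan is to carry the whole statement on the abelian group structure that the homogeneous representation of \cite{HSV} places on each orbit. I use two inputs about a medial quandle $Q$: first, each orbit $A_e=\orb Qe$ is an abelian group with neutral element $e$, and as a quandle it is the Alexander quandle $(A_e,g_e)$ with $g_e=L_e|_{A_e}$ (so $\varphi_{e,e}=1-g_e$); second, since $\dis Q$ is abelian, $\dis Q$ acts on every orbit by group translations. From these I first read off the affine form of the left translations: for $a\in Q$ and any orbit $A_f$, the decomposition $L_a=(L_aL_f^{-1})L_f$, together with $L_aL_f^{-1}\in\dis Q$ acting as a translation on $A_f$ and $L_f|_{A_f}=g_f$, gives
\[
L_a|_{A_f}(w)=g_f(w)+(a\ast f),
\]
an affine map with constant part $L_a(f)=a\ast f$.

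Granting this, the reconstruction of $Q$ is immediate: for $a\in A_e$ and $b\in A_f$ we get $a\ast b=L_a(b)=g_f(b)+(a\ast f)=(1-\varphi_{f,f})(b)+\varphi_{e,f}(a)+c_{e,f}$, which is exactly the multiplication of the sum of $\A_{Q,E}$. Because left division in any quandle is uniquely determined by multiplication, and the sum of the mesh is itself a quandle by \cite[Lemma 3.8]{JPSZ}, the two $\ld$-operations then coincide automatically. Hence $Q$ equals the sum of $\A_{Q,E}$ as soon as the mesh axioms are verified.

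To see that $\A_{Q,E}$ is an affine mesh, (M2) is just idempotency ($c_{e,e}=e\ast e=e$) and (M1) holds since $1-\varphi_{e,e}=g_e$ is an automorphism of $A_e$. The crucial point is that each $\varphi_{e,f}$ is a group homomorphism $A_e\to A_f$, equivalently that $R_f|_{A_e}$ is affine; I would prove it by the same translation bookkeeping, realizing the $A_e$-addition $a+a'$ by a displacement $\delta\in\dis Q$ with $\delta(e)=a$ and pushing it through $R_f$ using that $\delta$ is an automorphism and acts as a translation on $A_f$, which collapses $(a+a')\ast f$ to $(a'\ast f)+\varphi_{e,f}(a)$, i.e.\ additivity of $\varphi_{e,f}$. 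Then $R_f|_{A_e}$ is the affine map $x\mapsto\varphi_{e,f}(x)+c_{e,f}$, and (M3), (M4) both drop out of a single medial identity applied to $A_e\to A_k$, $x\mapsto(x\ast f)\ast k$: composing $R_f|_{A_e}$ with $R_k|_{A_f}$ gives linear part $\varphi_{f,k}\varphi_{e,f}$ and constant part $\varphi_{f,k}(c_{e,f})+c_{f,k}$, while mediality rewrites the map as $(x\ast k)\ast(f\ast k)$, namely a right translation inside $A_k$ (linear part $\varphi_{k,k}$, constant $g_k(f\ast k)$) applied after $R_k|_{A_e}$, with linear part $\varphi_{k,k}\varphi_{e,k}$ and constant $\varphi_{k,k}(c_{e,k})+g_k(f\ast k)$. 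The latter is independent of $f$, so comparing linear parts yields (M3), and comparing constants (using $c_{f,k}=f\ast k$) yields (M4).

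For indecomposability I would note that $c_{i,j}+\varphi_{i,j}(a)=a\ast j$, so $\bigcup_{i}\big(c_{i,j}+\im\varphi_{i,j}\big)=\{q\ast j:q\in Q\}$, and it remains to check this generates $A_j$. Using the affine form of the $L_a$ once more, $L_aL_b^{-1}$ translates $A_j$ by $(a\ast j)-(b\ast j)$; as $\dis Q$ is transitive on $A_j$ these amounts exhaust $A_j$, and since $\dis Q$ is generated by the $L_aL_b^{-1}$ they generate $A_j$ as a group, giving $A_j=\langle\{q\ast j\}\rangle$. The main obstacle is the homomorphism property of the $\varphi_{e,f}$ (equivalently, affineness of the translations on the orbits): this is the one place where the homogeneous representation together with the commutativity of $\dis Q$ is genuinely needed, after which (M1)--(M4), indecomposability, and the reconstruction are all bookkeeping with affine maps, with (M3)/(M4) packaged into the single medial identity above.
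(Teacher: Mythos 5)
Your proof is correct; I verified the computations that your sketch leaves compressed, and they all go through. In particular: the affine form $L_a|_{A_f}(w)=g_f(w)+a\ast f$ follows from the factorization $L_a=(L_aL_f^{-1})L_f$ exactly as you say; your additivity argument for $\varphi_{e,f}$ works, since with $\delta\in\dis Q$, $\delta(e)=a$, one gets $(a+a')\ast f=\delta L_{a'}\delta^{-1}(f)=\delta(f)+g_f(\delta^{-1}(f))+a'\ast f$ and the same computation with $a'$ replaced by $e$ identifies $\delta(f)+g_f(\delta^{-1}(f))$ with $\varphi_{e,f}(a)$; the single medial identity $(x\ast f)\ast k=(x\ast k)\ast(f\ast k)$ does yield (M3) by comparing linear parts (the right side being independent of $f$) and (M4) by comparing constants at $x=e$; and the indecomposability argument is sound because $L_aL_b^{-1}$ translates $A_f$ by $(a\ast f)-(b\ast f)$ and $\dis Q$ acts transitively on $A_f$. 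As for comparison: the paper itself gives no proof of this lemma --- it is quoted from \cite{JPSZ} --- so the only in-text indication of the intended argument is the remark immediately after the statement, which recasts the canonical mesh group-theoretically via $A_e=\dis Q/\dis Q_e$, $\varphi_{e,f}(\alpha\dis Q_e)=[\alpha,L_e]\dis Q_f$ and $c_{e,f}=L_eL_f^{-1}\dis Q_f$. Your translation bookkeeping is essentially that commutator calculus written additively (note $[\alpha,L_e]=L_{\alpha(e)}L_e^{-1}$, so $[\alpha,L_e](f)=\alpha(e)\ast f-e\ast f=\varphi_{e,f}(\alpha(e))$, which is precisely your key formula), so the underlying mechanism is the same; what your write-up buys is economy --- (M3) and (M4) extracted from one application of mediality rather than separate verifications --- at the cost of leaning on the orbit-module inputs (well-definedness of the group structure on $Qe$, and that $g_e=L_e|_{A_e}$ is a group automorphism), both of which rest on commutativity of $\dis Q$ and its normality in $\lmlt Q$. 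That reliance is legitimate and not circular here, since the orbit module of Section 3 is defined directly from $\dis Q$ without invoking the mesh-sum representation; I would only ask you to say explicitly that $g_e$ is a group automorphism (conjugation by $L_e$ preserves $\dis Q$ and the stabilizer $\dis Q_e$), since (M1) and your affine computations silently use it.
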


Alternatively, we could have defined the canonical mesh using the groups $A_e=\dis Q/\dis Q_e$, homomorphisms $\varphi_{e,f}(\alpha\dis Q_e)=[\alpha,L_e]\dis Q_f$, and constants $c_{e,f}=L_eL_f^{-1}\dis Q_f$. Then the original quandle $Q$ is \emph{isomorphic} to the sum of the mesh where the coset $\alpha\dis Q_e$ corresponds to the element $\alpha(e)\in Q$.

\section{Congruences below the orbit decomposition}\label{sec:congruences on quandles}

As it was shown in Theorem \ref{thm:decomposition}, each medial quandle is the sum of an indecomposable affine mesh $\mathcal A=(A_i;\varphi_{i,j};c_{i,j})$ over the index set $I$ and the orbits of the quandle coincide with the groups of the mesh. All the
abelian groups~$A_i$ can be naturally equipped with the structure of a $\Z[t,t^{-1}]$-module
by defining
$$t^n\cdot a=(1-\varphi_{i,i})^n (a),\qquad\text{for all }n\in\Z\text{ and }a\in A_i.$$
Moreover, we have
$$\varphi_{i,j} (t^n\cdot a)=\varphi_{i,j}(1-\varphi_{i,i})^n (a)=(1-\varphi_{j,j})^n\varphi_{i,j}(a)
=t^n \cdot\varphi_{i,j}(a),$$
for $n\geq 0$ and
$$\varphi_{i,j} (t^n\cdot a)=(1-\varphi_{j,j})^n(1-\varphi_{j,j})^{-n}\varphi_{i,j} (t^n\cdot a)
=(1-\varphi_{j,j})^n\varphi_{i,j}(1-\varphi_{i,i})^{-n}(1-\varphi_{i,i})^{n}(a)
=t^n \cdot\varphi_{i,j}(a),$$
for $n<0$. Therefore every $\varphi_{i,j}$ can be treated as a $\Z[t,t^{-1}]$-module homomorphism.
Hence, in the sequel, we shall assume that all the orbits are $R$-modules where~$R$
is a suitable image of $\Z[t,t^{-1}]$.

Let $Q$ be a medial quandle and $e\in Q$. Let $\alpha(e),\beta(e)\in
Qe$ with $\alpha,\beta\in\dis Q$ and put
$$\alpha(e)+\beta(e)=\alpha\beta(e),\quad -\alpha(e)=\alpha^{-1}(e),\quad
\text{and}\quad t^{n}\cdot\alpha(e)=\;^{L_{e}^{n}}\alpha(e)$$
Then $\orb Qe=(Qe,+,-,e,\cdot)$ is a $\Z[t,t^{-1}]$-module, called the \emph{orbit module} for $Qe$.

Let us note that the orbit decomposition provides a congruence, namely the relation $\pi\subseteq Q\times Q$ defined by
\begin{equation*}
a\mathrel{\pi} b\;\ {\rm iff}\;\; a=\alpha(b)\;\; {\rm for}\;\; {\rm some}\;\; \alpha\in \dis Q.
\end{equation*}
Clearly $\pi$ is an equivalence relation. Now let $a\mathrel{\pi} b$ and $c\mathrel{\pi} d$. Then $a=\alpha(b)$ and $c=\gamma(d)$ for some $\alpha,\gamma\in \dis Q$. By commutativity of the group $\dis Q$ we have
\begin{align*}
a\ast c=\alpha(b)\ast \gamma(d)=L_{\alpha(b)}\gamma(d)=\alpha L_b \alpha^{-1}\gamma(d)=\alpha L_b \alpha^{-1}\gamma L_d^{-1}(d)=\\
L_b \alpha^{-1}\gamma L_d^{-1}\alpha(d)=L_dL_d^{-1}L_b \alpha^{-1}\gamma L_d^{-1}\alpha L_d(d)=\\
L_d\alpha^{-1}\gamma L_d^{-1}\alpha L_d L_d^{-1}L_b(d)=
L_d\alpha^{-1}\gamma L_d^{-1}\alpha L_b(d)=L_d\alpha^{-1}\gamma L_d^{-1}\alpha (b\ast d),
\end{align*}
which shows that $(a\ast c) \mathrel{\pi} (b\ast d)$. Similar calculations show that $(a\ld c) \mathrel{\pi} (b\ld d)$ and
one obtains that $\pi$ is a quandle congruence.

\begin{prop}\label{lm1:congruences}
The relation $\pi$ is the least congruence on a quandle $Q$ such that the quotient $Q/{\pi}$ is the projection quandle.
\end{prop}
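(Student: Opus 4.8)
The plan is to establish the two halves of the statement separately: first that $Q/\pi$ actually is the right projection quandle, and then that any congruence $\rho$ for which $Q/\rho$ is a right projection quandle must contain $\pi$.

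For the first half I would show that $a\ast b\mathrel{\pi}b$ and $a\ld b\mathrel{\pi}b$ for all $a,b\in Q$. The trick is to exhibit an explicit element of $\dis Q$ carrying $b$ to $a\ast b$. Using idempotency $b\ast b=b$, i.e.\ $L_b(b)=b$, we get $a\ast b=L_a(b)=L_aL_b^{-1}(b)$, and $L_aL_b^{-1}\in\dis Q$ by definition; similarly $a\ld b=L_a^{-1}(b)=L_a^{-1}L_b(b)$, with $L_a^{-1}L_b\in\dis Q$ thanks to the zero-exponent-sum characterization of $\dis Q$ recalled above. Hence both $a\ast b$ and $a\ld b$ lie in the orbit of $b$, so in the quotient $[a]\ast[b]=[b]=[a]\ld[b]$, which says precisely that $Q/\pi$ is the right projection quandle.

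For the second half, let $\rho$ be any congruence such that $Q/\rho$ is the right projection quandle. The key reformulation is that this property is equivalent to $L_c(b)\mathrel{\rho}b$ and $L_c^{-1}(b)\mathrel{\rho}b$ for all $b,c\in Q$: indeed $[c]\ast[b]=[b]$ unpacks to $c\ast b\mathrel{\rho}b$, i.e.\ $L_c(b)\mathrel{\rho}b$, and likewise $c\ld b\mathrel{\rho}b$ gives $L_c^{-1}(b)\mathrel{\rho}b$. In other words, every left translation and its inverse act as the identity modulo $\rho$.

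It then remains to show that every $\alpha\in\dis Q$ satisfies $\alpha(x)\mathrel{\rho}x$ for all $x$, for then $a\mathrel{\pi}b$, say $a=\alpha(b)$, forces $a\mathrel{\rho}b$, i.e.\ $\pi\subseteq\rho$. I would argue by induction on the word length of $\alpha$ in the generators $L_cL_d^{-1}$. A single generator satisfies $L_cL_d^{-1}(x)\mathrel{\rho}L_d^{-1}(x)\mathrel{\rho}x$ by applying the two identities above and transitivity; for a longer word $\alpha=\beta\gamma$ one writes $\alpha(x)=\beta(\gamma(x))$, invokes the inductive hypothesis $\gamma(x)\mathrel{\rho}x$ together with the base-case fact $\beta(\gamma(x))\mathrel{\rho}\gamma(x)$, and closes up by transitivity. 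The only subtlety is the bookkeeping: one must note that the generating set $\{L_cL_d^{-1}\}$ is closed under inverses, since $(L_cL_d^{-1})^{-1}=L_dL_c^{-1}$, so that arbitrary elements of $\dis Q$ are genuinely covered by the induction. No deeper difficulty is expected; the whole argument is a direct translation between the orbit description of $\pi$ and the triviality of left translations modulo $\rho$.
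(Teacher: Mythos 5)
Your proposal is correct and follows essentially the same route as the paper: the same explicit witnesses $L_aL_b^{-1}$ and $L_a^{-1}L_b$ show $a\ast b\mathrel{\pi}b$ and $a\ld b\mathrel{\pi}b$, and the minimality half rests on the same observation that every left translation and its inverse act trivially modulo such a congruence. Your induction on word length in the generators $L_cL_d^{-1}$ is just a reorganized version of the paper's argument, which writes $\alpha=L_{b_1}^{k_1}\cdots L_{b_n}^{k_n}$ with zero exponent sum and peels off the translations one at a time.
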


\begin{proof}
First note that, for any $a,b\in Q$, $a\ast b=L_aL_b^{-1}(b)$ and $a\ld b=L_a^{-1}L_b(b)$, which means that
$(a\ast b)\mathrel{\pi} b$ and $(a\ld b)\mathrel{\pi} b$. This shows that $Q/{\pi}$ is the projection
quandle.

Now, let $\psi$ be a congruence relation on $Q$ such that $Q/{\psi}$ is the projection quandle. Then, for any $x,y\in Q$,
$y\mathrel{\psi} (x\ast y)=L_x(y)$ and $y\mathrel{\psi} (x\ld y)=L_x^{-1}(y)$.

If $a\mathrel{\pi} b$ then $a=\alpha(b)$, for some $\alpha\in \dis Q$. By the definition of~$\dis Q$, $\alpha = L_{b_1}^{k_1}\dots L_{b_n}^{k_n}$, for some $b_1,\dots,b_n\in Q$, and $\sum_{i=1}^n k_i=0$. This gives the following:
\begin{equation*}
b\mathrel{\psi} L_{b_n}^{k_n}(b)\mathrel{\psi} L_{b_{n-1}}^{k_{n-1}}L_{b_n}^{k_n}(b)\mathrel{\psi}\ldots\mathrel{\psi} L_{b_1}^{k_1}\dots L_{b_n}^{k_n}(b)=\alpha(b)=a.
\end{equation*}
So, $a\mathrel{\psi} b$ and $\pi\subseteq\psi$.
\end{proof}

Now we will describe a relationship between congruences of a medial quandle $Q$ and congruences of the modules from which it is built. In particular, we will show that each congruence on~$Q$, when restricted to an orbit, is a module congruence.
In fact, one obtains a one-to-one correspondence between congruences on $Q$ below the congruence providing the orbit decomposition and families of submodules of orbit modules satisfying  an additional condition.

It is not usual to work with congruences of modules and we shall therefore be,
from now on, speaking about submodules instead of congruences of modules. In particular, if $\varrho$ is a congruence of a module $M$ then there is a submodule $M_{\varrho}$ of $M$ such that $a\mathrel{\varrho}b\mathrel{\Leftrightarrow} a-b\in M_{\varrho}$.
In the case if $a-b\in N$, for some submodule $N$, we will sometimes write $a\equiv_{N} b$.

\begin{thm}\label{thm:congruences}
Let $Q$ be a medial quandle being the sum of an affine mesh $(A_i;\,\varphi_{i,j};\,c_{i,j})$ over a set $I$.

Let $\varrho\subseteq \pi$ be a congruence relation on $Q$. Then, for each $i\in I$, $\varrho$ restricted to the orbit $A_i$, provides a $\Z[t,t^{-1}]$-submodule $M_i$ of the module $A_i$. Moreover, the following condition is satisfied
\begin{equation}\label{cond:lm3:congruences}
\varphi_{k,j}(M_k)\subseteq M_j,
\end{equation}
for each $k,j\in I$.

On the other hand, let $(M_i)_{i\in I}$ be a family of submodules such that each $M_i$ is a $\Z[t,t^{-1}]$-submodule of the module $A_i$ and the condition \eqref{cond:lm3:congruences} holds for each $k,j\in I$. Then the binary relation $\varrho\subseteq \pi$ defined for $a,b\in Q$ as
\begin{equation}\label{cond:lm2:congruences}
a\mathrel{\varrho} b \; {\rm iff}\;\; \exists (i\in I)\; a,b\in A_i\;\; {\rm and} \;\; a\equiv_{M_i} b,
\end{equation}
is a congruence relation on $Q$.
\end{thm}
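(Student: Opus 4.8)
The plan is to treat the two directions separately, exploiting throughout the fact (from the definition of the sum of an affine mesh) that inside a single orbit $A_i$ the quandle operations are the Alexander-quandle operations
$a\ast b=\varphi_{i,i}(a)+(1-\varphi_{i,i})(b)$ and $a\ld b=(1-(1-\varphi_{i,i})^{-1})(a)+(1-\varphi_{i,i})^{-1}(b)$, while the module operations are $a+b$, $-a$, $t\cdot a=(1-\varphi_{i,i})(a)$. For the \emph{first direction}, suppose $\varrho\subseteq\pi$ is a quandle congruence. Because $\varrho\subseteq\pi$, two $\varrho$-related elements lie in the same orbit, so $\varrho$ restricts to an equivalence $\varrho_i$ on each $A_i$; I define $M_i=\{a-b : a\mathrel{\varrho_i}b\}$ and must check $M_i$ is a submodule. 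First I would fix a basepoint and use the fact that the zero of the orbit module $A_i$ corresponds to a chosen $e_i\in A_i$; then $a\mathrel{\varrho_i}b$ should be equivalent to $a-b\mathrel{\varrho_i}0$. The key verifications are closure under the module operations: for $+$ and $-$, I would translate $a\mathrel{\varrho_i}b$ by the displacement action (left multiplications restricted to $A_i$ are module automorphisms, and differences of such give translations), using that $\varrho$ is preserved by the endomorphisms $L_c,R_c$; for the $t$-action, I use $t\cdot a=(1-\varphi_{i,i})(a)$ and note $(1-\varphi_{i,i})(a)=e_i\ld a$ (the left division by the basepoint inside the orbit), which is a quandle term, hence $\varrho$-compatible. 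Thus $M_i$ is a $\Z[t,t^{-1}]$-submodule.

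Still in the first direction, I must verify the compatibility condition \eqref{cond:lm3:congruences}, namely $\varphi_{k,j}(M_k)\subseteq M_j$. Here I would take $a\mathrel{\varrho}b$ with $a,b\in A_k$, so $a-b\in M_k$, and apply a mixed-orbit quandle operation that moves into $A_j$. The natural choice is right multiplication by a fixed element $f\in A_j$: for $a\in A_k$, $a\ast f=c_{k,j}+\varphi_{k,j}(a)+(1-\varphi_{j,j})(f)\in A_j$, and since $\varrho$ is a congruence, $a\ast f\mathrel{\varrho}b\ast f$. Subtracting, the constant $c_{k,j}$ and the term $(1-\varphi_{j,j})(f)$ cancel, leaving $\varphi_{k,j}(a)-\varphi_{k,j}(b)=\varphi_{k,j}(a-b)\in M_j$. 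This gives exactly $\varphi_{k,j}(M_k)\subseteq M_j$.

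For the \emph{second direction}, I start from a family $(M_i)$ of submodules satisfying \eqref{cond:lm3:congruences} and define $\varrho$ by \eqref{cond:lm2:congruences}. That $\varrho$ is an equivalence relation contained in $\pi$ is immediate. The substantive task is to show $\varrho$ respects $\ast$ and $\ld$: assuming $a\mathrel{\varrho}b$ (so $a,b\in A_k$, $a-b\in M_k$) and $c\mathrel{\varrho}d$ (so $c,d\in A_\ell$, $c-d\in M_\ell$), I must show $a\ast c\mathrel{\varrho}b\ast d$ and likewise for $\ld$. Using $a\ast c=c_{k,\ell}+\varphi_{k,\ell}(a)+(1-\varphi_{\ell,\ell})(c)$, both $a\ast c$ and $b\ast d$ lie in the \emph{same} orbit $A_\ell$, and their difference is $\varphi_{k,\ell}(a-b)+(1-\varphi_{\ell,\ell})(c-d)$. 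The first summand lies in $M_\ell$ by condition \eqref{cond:lm3:congruences}, and the second lies in $M_\ell$ because $M_\ell$ is a submodule closed under the module endomorphism $1-\varphi_{\ell,\ell}=t$ (indeed $(1-\varphi_{\ell,\ell})(c-d)=t\cdot(c-d)$). Hence $a\ast c-b\ast d\in M_\ell$, giving $a\ast c\mathrel{\varrho}b\ast d$; the computation for $\ld$ is entirely analogous, with $(1-\varphi_{\ell,\ell})^{-1}=t^{-1}$ again a module operation. I expect the \textbf{main obstacle} to be the first direction's verification that $M_i$ is closed under addition and the $t^{-1}$-action in a basepoint-independent way: care is needed because the module structure on $A_i$ is defined relative to the chosen transversal element $e_i$, so I must consistently pass between the statements ``$a\mathrel{\varrho}b$'' and ``$a-b\mathrel{\varrho}0$'' and confirm that the translations realizing additive closure are genuinely quandle-term operations (or displacement-group elements) that preserve $\varrho$. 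Once that bookkeeping is set up, every step reduces to the cancellation of constants and the submodule/compatibility closure identified above.
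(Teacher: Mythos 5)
Your proposal is correct and follows essentially the same route as the paper's proof: translations realized by the displacement group (i.e.\ products of $L_z^{\pm1}$, which preserve $\varrho$) give additive closure of $M_i$, quandle operations with the basepoint give closure under the $t^{\pm1}$-action, right translation by a fixed $f\in A_j$ gives the compatibility condition $\varphi_{k,j}(M_k)\subseteq M_j$, and the converse is the same direct computation with $\varphi_{k,\ell}(a-b)+(1-\varphi_{\ell,\ell})(c-d)\in M_\ell$. One cosmetic slip: with basepoint $e_i=0_i$ one has $e_i\ast a=(1-\varphi_{i,i})(a)=t\cdot a$ while $e_i\ld a=(1-\varphi_{i,i})^{-1}(a)=t^{-1}\cdot a$, so you swapped multiplication and division in your identification, but since both are quandle term operations preserving $\varrho$ the argument is unaffected.
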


\begin{proof}
Let $\mathcal A_{Q,E}$ be the canonical mesh of $Q$ and $\varrho$ be a congruence relation on $Q$ such that $\varrho\subseteq \pi$.
Let for some $x,y\in Q$, $x\mathrel{\varrho} y$. Since $\varrho$ is a quandle congruence, it follows that for each $z\in Q$,
\begin{equation*}
z\ast x=L_z(x)\mathrel{\varrho} L_z(y)=z\ast y,
\end{equation*}
and
\begin{equation*}
z\ld x=L^{-1}_z(x)\mathrel{\varrho} L^{-1}_z(y)=z\ld y.
\end{equation*}
In consequence, $\mu(x) \mathrel{\varrho} \mu(y)$, for each $\mu\in \dis{Q}$.

Let $a=\alpha(e), b=\beta(e), c=\gamma(e), d=\delta(e)\in Qe$, with $\alpha,\beta,\gamma,\delta\in\dis Q$ and let $a\mathrel{\varrho} b$ and $c\mathrel{\varrho} d$. Thus, $\gamma(a)\mathrel{\varrho} \gamma(b)$ and $\beta(c)\mathrel{\varrho} \beta(d)$. Hence,
\begin{equation*}
c+a=\gamma(e)+\alpha(e)=\gamma\alpha(e)=\gamma(a)\mathrel{\varrho} \gamma(b)=\gamma\beta(e)=\gamma(e)+\beta(e)=c+b,
\end{equation*}
and similarly $(b+c)\mathrel{ \varrho} (b+d)$
which implies
\begin{equation*}
(a+c)\mathrel{\varrho} (c+b) \mathrel{\varrho} (b+d).
\end{equation*}
Therefore, $\varrho_e:=\varrho|_{Qe}$ (the restriction of $\varrho$ to the orbit $Qe$) is a congruence relation of the abelian group $\orb Qe$.
Furthermore, since $\varrho_e$ is a congruence on the subquandle $Qe$, i.e. $(e\ast a)\mathrel{\varrho_e} (e\ast b)$ and $(e\ld a)\mathrel{\varrho_e} (e\ld b)$, one obtains
$$t^n\cdot a=(1-\varphi_{e,e})^n (a) \mathrel{\varrho_e} (1-\varphi_{e,e})^n (b)=t^n\cdot b$$
and $\varrho_e$ is a congruence of the module $\orb Qe$.
Then, for each $e\in E$, $M_e:=\{x\in Qe\mid x\mathrel{\varrho_{e}} e\}$ is a submodule of the module $\orb Qe$. Hence, for each $x\in M_e$ and $f\in E$
\begin{equation*}
\varphi_{e,f}(x)=(x\ast f-e\ast f) \mathrel{\varrho_f} (e\ast f-e\ast f)=f.
\end{equation*}
So, condition \eqref{cond:lm3:congruences} is satisfied.

Now let $Q$ be the sum of an affine mesh $(A_i;\,\varphi_{i,j};\,c_{i,j})$ over a set $I$ and assume that there is a family $(M_i)_{i\in I}$ of modules such that each $M_i$ is a $\Z[t,t^{-1}]$-submodule of the module $A_i$, for which the condition \eqref{cond:lm3:congruences} is satisfied. Let us consider a relation $\varrho \subseteq \pi$ defined by \eqref{cond:lm2:congruences}.

Clearly, $\varrho$ is an equivalence relation. The reflexivity and symmetry of $\varrho$ are obvious. Now let us note that, since $Q$ is a disjoint union of the sets $A_i$, if for $a,b,c\in Q$, $a\mathrel{\varrho} b$  and $b\mathrel{\varrho} c$ then there is $i\in I$ such that $a,b,c\in A_i$ and $a\equiv_{M_i} b \equiv_{M_i} c$ which shows that the relation  $\varrho$ is transitive. Further, let $a,b,c,d\in Q$ and suppose that $a\mathrel{\varrho} b$ and $c\mathrel{\varrho} d$. Then there are $i,j\in I$, such that $a,b\in A_i$, $a\equiv_{M_i} b$,
$c,d\in A_j$ and $c\equiv_{M_j} d$. By Condition \eqref{cond:lm3:congruences} we have
$\varphi_{i,k}(a)\equiv_{ M_k}\varphi_{i,k}(b)$ and $\varphi_{j,k}(c)\equiv_{M_k}\varphi_{j,k}(d)$, for any $k\in I$.

By the definition of the multiplication in the sum of an affine mesh, we immediately obtain
\begin{equation*}
a\ast c=c_{i,j}+\varphi_{i,j}(a)+(1-\varphi_{j,j})(c)\equiv_{M_j} c_{i,j}+\varphi_{i,j}(b)+(1-\varphi_{j,j})(d)=b\ast d.
\end{equation*}
Obviously, $c-\varphi_{i,j}(a)-c_{i,j}\equiv_{M_j}d-\varphi_{i,j}(b)-c_{i,j}$, and as a consequence we get
\begin{align*}
a\ld c&=(1-\varphi_{j,j})^{-1}(c-\varphi_{i,j}(a)-c_{i,j})=t^{-1}\cdot(c-\varphi_{i,j}(a)-c_{i,j})\equiv_{M_j}t^{-1}\cdot(d-\varphi_{i,j}(b)-c_{i,j})=\\
&=(1-\varphi_{j,j})^{-1}(d-\varphi_{i,j}(b)-c_{i,j})=b\ld d.
\end{align*}
Hence, $\varrho$ is a congruence of $Q$.
\end{proof}

Examples of such constructed congruences are given in the following lemma:

\begin{lm}\label{lmex:clas2}
Let~$Q$ be a medial quandle which is the sum of an affine mesh $(A_i; \varphi_{i,j}; c_{i,j})$ over a set~$I$.
Let~$j\in I$.
Then the following families of submodules satisfy Condition~$(\ref{cond:lm3:congruences})$:
\begin{itemize}
 \item $(\Ker(\varphi_{i,j}))_{i\in I}$;
  \item $(\bigcap_{n\in\mathbb{N}}\varphi_{i,i}^{n}(A_i))_{i\in I}$;
  \item $(\varphi_{i,j}(\bigcap_{n\in\mathbb{N}}\varphi_{i,i}^{n}(A_i)))_{i\in I}$.
\end{itemize}
\end{lm}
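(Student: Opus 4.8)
The plan is to reduce all three cases to the single structural identity (M3), which says that a composite $\varphi_{?,?}\varphi_{?,?}$ of two transition maps depends only on its source and target and not on the intermediate orbit. First I would record the two specializations of (M3) that do all the work. Taking the intermediate index equal to the source and then to the target yields the intertwining relation $\varphi_{k,l}\varphi_{k,k}=\varphi_{l,l}\varphi_{k,l}$, whence by an immediate induction $\varphi_{k,l}\varphi_{k,k}^{\,n}=\varphi_{l,l}^{\,n}\varphi_{k,l}$ for every $n\in\mathbb{N}$; taking the intermediate index equal to the fixed orbit $j$ yields the factorization relations $\varphi_{l,j}\varphi_{k,l}=\varphi_{j,j}\varphi_{k,j}$ and $\varphi_{k,l}\varphi_{j,k}=\varphi_{j,l}\varphi_{j,j}$. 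Since every $\varphi_{i,j}$ is a $\Z[t,t^{-1}]$-homomorphism (as established before Theorem~\ref{thm:congruences}), the kernels and images appearing below are automatically submodules, so only the set-theoretic inclusion required by condition~\eqref{cond:lm3:congruences} has to be checked.

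For the first family I would verify $\varphi_{k,l}(\Ker\varphi_{k,j})\subseteq\Ker\varphi_{l,j}$ directly: if $\varphi_{k,j}(x)=0$ then the factorization relation gives $\varphi_{l,j}(\varphi_{k,l}(x))=\varphi_{j,j}(\varphi_{k,j}(x))=\varphi_{j,j}(0)=0$, so $\varphi_{k,l}(x)\in\Ker\varphi_{l,j}$. For the second family, write $N_i=\bigcap_{n}\varphi_{i,i}^{\,n}(A_i)$ and check $\varphi_{k,l}(N_k)\subseteq N_l$ one exponent at a time: fix $m$, take $x\in N_k$, and use $x\in\varphi_{k,k}^{\,m}(A_k)$ to write $x=\varphi_{k,k}^{\,m}(y)$; the intertwining relation then gives $\varphi_{k,l}(x)=\varphi_{l,l}^{\,m}(\varphi_{k,l}(y))\in\varphi_{l,l}^{\,m}(A_l)$, and since $m$ was arbitrary, $\varphi_{k,l}(x)\in N_l$. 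A by-product I would record here is the special case $k=l=j$, namely $\varphi_{j,j}(N_j)\subseteq N_j$: the stable part of an orbit is carried into itself by its own diagonal map.

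For the third family I would read it as the forward image, in each orbit, of the stable part $N_j$ of the distinguished orbit $A_j$, so that the member attached to index $i$ is the submodule $\varphi_{j,i}(N_j)\subseteq A_i$ (this is the only reading under which the displayed expression lands in $A_i$ and depends on the fixed $j$). To verify condition~\eqref{cond:lm3:congruences} I would combine the two previous mechanisms: for $x\in N_j$ the factorization relation gives $\varphi_{k,l}(\varphi_{j,k}(x))=\varphi_{j,l}(\varphi_{j,j}(x))$, and by the by-product of the second case $\varphi_{j,j}(x)\in N_j$, so the right-hand side lies in $\varphi_{j,l}(N_j)$, which is exactly the member attached to index $l$.

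The only genuinely delicate point is the infinite intersection in the second family (and hence in the third): one cannot simply pull $\varphi_{k,l}$ through $\bigcap_{n}$, since the image of an intersection need not equal the intersection of the images. The argument avoids this by never forming a single preimage of $x$; instead, for each exponent $m$ it uses the membership $x\in\varphi_{k,k}^{\,m}(A_k)$ separately and lets the intertwining identity shuttle the exponent $m$ across $\varphi_{k,l}$. Everything else is a one-line consequence of the appropriate specialization of (M3).
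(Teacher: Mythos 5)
Your proposal is correct and takes essentially the same route as the paper: both arguments reduce all three families to instances of (M3), and both treat the infinite intersection exponent-by-exponent, writing $x=\varphi_{k,k}^{\,m}(y)$ for each $m$ separately and shuttling the exponent across via $\varphi_{k,l}\varphi_{k,k}^{\,m}=\varphi_{l,l}^{\,m}\varphi_{k,l}$ rather than pulling $\varphi_{k,l}$ through $\bigcap_n$. Your re-indexed reading of the third family (the stable part of one fixed orbit pushed forward into every orbit) is exactly what the paper's own computation $\varphi_{j,p}\varphi_{i,j}\bigl(\bigcap_{n}\varphi_{i,i}^{n}(A_i)\bigr)=\varphi_{i,p}\bigl(\bigcap_{n}\varphi_{i,i}^{n}(A_i)\bigr)$ implements, and your reliance only on the inclusion $\varphi_{j,j}(N_j)\subseteq N_j$ (where the paper asserts an equality that is not needed, and not obvious in general) is, if anything, slightly more careful.
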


\begin{proof}
Note that for any $i,j,p,r\in I$,

    \begin{align}
    &\varphi_{i,j}(\Ker(\varphi_{i,p}))\subseteq \Ker(\varphi_{j,r});\label{eq:1}\\
    &\varphi_{i,j}(\bigcap_{n\in\mathbb{N}}\varphi_{i,i}^{n}(A_i))\subseteq \bigcap_{n\in\mathbb{N}}\varphi_{j,j}^{n}(A_{j});\label{eq:2}\\
    &\varphi_{j,p}\varphi_{i,j}(\bigcap_{n\in\mathbb{N}}\varphi_{i,i}^{n}(A_i))=\varphi_{i,p}(\bigcap_{n\in\mathbb{N}}\varphi_{i,i}^{n}(A_i)). \label{eq:3}
    \end{align}

Indeed. Let $x\in \Ker(\varphi_{i,p})$. This implies that $\varphi_{i,p}(x)=0$ and by (M3), $\varphi_{j,r}\varphi_{i,j}(x)=\varphi_{p,r}\varphi_{i,p}(x)=0$.
Hence $\varphi_{i,j}(x)\in  \Ker(\varphi_{j,r})$.

Now let $x\in \bigcap_{n\in\mathbb{N}}\varphi_{i,i}^{n}(A_i)$. Then for every $n\in \mathbb{N}$ there exists $a_n\in A_i$ such that $x=\varphi_{i,i}^{n}(a_n)$. So, by (M3), for every $n\in \mathbb{N}$
				\begin{align*}
				\varphi_{i,j}(x)=\varphi_{i,j}\varphi_{i,i}^{n}(a_n)=\varphi_{j,j}^{n}\varphi_{i,j}(a_n)\in \varphi_{j,j}^{n}(A_j),
\end{align*}
which implies $\varphi_{i,j}(x)\in \bigcap_{n\in\mathbb{N}}\varphi_{j,j}^{n}(A_{j})$.
Moreover, once again by (M3),

\begin{align*}
\varphi_{j,p}\varphi_{i,j}(\bigcap_{n\in\mathbb{N}}\varphi_{i,i}^{n}(A_i))=\varphi_{i,p}\varphi_{i,i}(\bigcap_{n\in\mathbb{N}}\varphi_{i,i}^{n}(A_i))=
\varphi_{i,p}(\bigcap_{n\in\mathbb{N}}\varphi_{i,i}^{n}(A_i)).
\end{align*}

Hence all three families satisfy Condition~(\ref{cond:lm3:congruences}).
\end{proof}

Lemma \ref{lmexm1:congr} shows that just one submodule of any orbit module is sufficient to define a congruence relation on $Q$ below $\pi$.
\begin{lm}\label{lmexm1:congr}
Let $Q$ be a medial quandle which is the sum of an affine
mesh $(A_i;\,\varphi_{i,j};\,c_{i,j})$ over a set $I$. Let, for some
$i_0\in I$, $M_{i_0}$ be a $\Z[t,t^{-1}]$-submodule
of $A_{i_0}$ and $M_i=\varphi_{i_0,i}(M_{i_0})$, for $i\neq i_0$.
Then the family $(M_i)_{i\in I}$ satisfies Condition~$(\ref{cond:lm3:congruences})$.
\end{lm}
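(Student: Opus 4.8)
The plan is to verify the defining inclusion $\varphi_{k,j}(M_k)\subseteq M_j$ of Condition \eqref{cond:lm3:congruences} directly, for each pair $k,j\in I$, using only two ingredients: the invariance of the seed submodule $M_{i_0}$ under $\varphi_{i_0,i_0}$, and the coherence law (M3), which lets me re-route any composite of the $\varphi$'s through the distinguished index $i_0$.

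First I would record the invariance. Recall that the module structure on $A_{i_0}$ is given by $t\cdot a=(1-\varphi_{i_0,i_0})(a)$, so that $\varphi_{i_0,i_0}=1-t$ acts on $A_{i_0}$ as a $\Z[t,t^{-1}]$-module endomorphism; since $M_{i_0}$ is a $\Z[t,t^{-1}]$-submodule it is closed under multiplication by $t$ and by $1$, hence under $1-t$, giving $\varphi_{i_0,i_0}(M_{i_0})\subseteq M_{i_0}$. Next I would record the single instance of (M3) that is needed: for all $k,j\in I$,
\[
\varphi_{k,j}\varphi_{i_0,k}=\varphi_{i_0,j}\varphi_{i_0,i_0},
\]
which is (M3) with starting index $i_0$, terminal index $j$, and the two intermediate indices $k$ and $i_0$.

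With these in hand the verification splits according to whether $k=i_0$. If $k=i_0$, then $M_k=M_{i_0}$ and $\varphi_{k,j}(M_k)=\varphi_{i_0,j}(M_{i_0})$; for $j\neq i_0$ this is exactly $M_j$ by definition, while for $j=i_0$ it equals $\varphi_{i_0,i_0}(M_{i_0})\subseteq M_{i_0}=M_j$ by invariance. If $k\neq i_0$, then $M_k=\varphi_{i_0,k}(M_{i_0})$, and the (M3) instance above gives
\[
\varphi_{k,j}(M_k)=\varphi_{k,j}\varphi_{i_0,k}(M_{i_0})=\varphi_{i_0,j}\bigl(\varphi_{i_0,i_0}(M_{i_0})\bigr)\subseteq\varphi_{i_0,j}(M_{i_0}),
\]
again using invariance in the last step; and $\varphi_{i_0,j}(M_{i_0})$ equals $M_j$ when $j\neq i_0$ and is contained in $M_{i_0}=M_j$ when $j=i_0$. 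This exhausts all cases.

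I do not expect a genuine obstacle here: the argument is a short case check. The only points requiring care are the bookkeeping in applying (M3) with the correct choice of intermediate index $i_0$, and remembering that the hypothesis that $M_{i_0}$ is a $\Z[t,t^{-1}]$-submodule is precisely what delivers the $\varphi_{i_0,i_0}$-invariance on which the two non-trivial cases rest. This is the same mechanism that underlies identity \eqref{eq:3} in the preceding lemma, here specialized to a single propagated submodule.
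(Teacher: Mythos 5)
Your proof is correct and follows essentially the same route as the paper's: both rest on the invariance $\varphi_{i_0,i_0}(M_{i_0})=(1-t)\cdot M_{i_0}\subseteq M_{i_0}$ together with a single application of (M3), the only difference being that you re-route $\varphi_{k,j}\varphi_{i_0,k}$ as $\varphi_{i_0,j}\varphi_{i_0,i_0}$ while the paper writes it as $\varphi_{j,j}\varphi_{i_0,j}$. Your explicit handling of the edge cases $j=i_0$ (where $\varphi_{i_0,j}(M_{i_0})$ is only contained in, not equal to, $M_j$) is in fact slightly more careful than the paper's wording, but the mechanism is identical.
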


\begin{proof}
Note that $\varphi_{i_0,i_0}(M_{i_0})=(1-t)\cdot M_{i_0}\subseteq M_{i_0}$ and, for $i\neq i_0$, $\varphi_{i,j}(M_i)=\varphi_{i,j}\varphi_{i_0,i}(M_{i_0})=\varphi_{j,j}\varphi_{i_0,j}(M_{i_0})=
 \varphi_{j,j}(M_j)=(1-t)\cdot M_j\subseteq M_j$. Hence, the tuple of submodules $(M_i)_{i\in I}$ satisfies the condition \eqref{cond:lm3:congruences}.
\end{proof}

We use this observation in the sequel to construct congruences with only one non-trivial class.
The idea is the following:

\begin{exm}\label{exm:congr} Let $Q$ be a medial quandle which is the sum of an affine mesh
$(A_i;\,\varphi_{i,j};\,c_{i,j})$ over a set $I$
and assume that there exists $i_0\in I$ such that $\bigcap\limits_{j\in I}\mathrm{Ker}(\varphi_{i_0,j})\neq\{0\}$.
Let
$M_{i_0}\subseteq
\bigcap\limits_{j\in I}\mathrm{Ker}(\varphi_{i_0,j})$ be a non-trivial $\Z[t,t^{-1}]$-submodule
of $A_{i_0}$.

Then, by Lemma~\ref{lmexm1:congr} and Theorem \ref{thm:congruences}, the relation
\begin{equation*}
a\;\alpha\; b\;\; {\rm if \; and \; only \; if}\;\; a=b \;\; {\rm
or} \;\; (a,b\in A_{i_0} \text{ and }a\equiv_{M_{i_0}} b)
\end{equation*}
is a non-trivial congruence of $Q$, such that $\alpha\subseteq
\pi$ and $\alpha|_{A_i}$ is trivial, for each $i_0\neq i\in I$.
\end{exm}

Up to isomorphism there is only one two element SI medial quandle, namely the two element
projection quandle. Moreover, this is also the only 1-reductive SI medial quandle since, by Theorem \ref{thm:decomposition}, each  projection quandle is the sum of one element orbits. In what follows we will study only SI medial quandles which are not projection, i.e. which have at least three elements and, in a non-connected case, have at least one orbit with at least two elements.

Let $a,b\in Q$ and $\Theta(a,b)$ denote the smallest congruence on $Q$ collapsing $(a,b)$. Recall, by \cite[Theorem 1.20.]{MMT}
$\Theta(a,b)$ can be described by the following recursion:
\begin{align*}
X_0&=\{(a,b),(b,a)\}\cup\{(q,q)\mid q\in Q\}\\
X_{n+1}&=X_n\cup \{(x\ast x',y\ast y')\mid (x,y),(x',y')\in X_n\}\cup \{(x\ld x',y\ld y')\mid (x,y),(x',y')\in X_n\}\cup\\
&\cup\{(x,z)\mid (x,y),(y,z)\in X_n\;\;{\rm for\; some\;} y\in Q\}.
\end{align*}
Then $\Theta(a,b)=\bigcup_{n\in\mathbb{N}}X_n$.

Recall also that the least nonzero congruence $\mu$ of $Q$ is called \emph{the monolith} and for any pair $(a,b)\in \mu$ with $a\neq b$,  $\mu=\Theta(a,b)$. In subdirectly irreducible modules, the word used for the smallest proper submodule
is the {\em monolith} too; to distinguish between the congruence and the submodule, we shall call this submodule
the {\em socle} of the module. Recall, that, for general modules, the socle is the sum of all simple submodules. SI modules are sometimes called \emph{cocyclic}, \emph{monolithic} or \emph{completely uniform}.

\begin{thm}\label{thm:simodule}
Let $Q$ be a non-projection SI medial quandle which is the sum of an affine mesh
$(A_i;\,\varphi_{i,j};\,c_{i,j})$ over a set $I$.
Then there is $i_0\in I$ such that $A_{i_0}$ is a subdirectly irreducible $\Z[t,t^{-1}]$-module and the socle of the module $A_{i_0}$ is a non-trivial block of the monolith of $Q$.
\end{thm}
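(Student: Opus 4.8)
The plan is to locate the monolith congruence $\mu$ of $Q$ inside the orbit congruence $\pi$ and then convert subdirect irreducibility of $Q$ into subdirect irreducibility of a single orbit module, using the congruence--submodule correspondence of Theorem~\ref{thm:congruences} as the translation device.

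First I would check that $\pi$ is nonzero. If $\pi=0$, then every orbit is a singleton, so by Theorem~\ref{thm:decomposition} the quandle $Q$ is a sum of one-element orbits, i.e. a right projection quandle, contradicting the hypothesis that $Q$ is non-projection. Since $Q$ is subdirectly irreducible, $\mu$ is contained in every nonzero congruence, hence $\mu\subseteq\pi$. Now Theorem~\ref{thm:congruences} applies to $\mu$ and yields a family of $\Z[t,t^{-1}]$-submodules $(M_i)_{i\in I}$, with $M_i\subseteq A_i$, satisfying condition~\eqref{cond:lm3:congruences}, such that $a\,\mu\,b$ iff $a,b$ lie in a common orbit $A_i$ and $a\equiv_{M_i}b$. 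Because $\mu\neq 0$, at least one of these submodules is nonzero; fix $i_0\in I$ with $M_{i_0}\neq\{0\}$. By construction $M_{i_0}$ is precisely the block of $\mu$ containing the zero of $A_{i_0}$, and it is a nontrivial block since $M_{i_0}\neq\{0\}$.

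The core step is to prove that $M_{i_0}$ is the least nonzero submodule of $A_{i_0}$; this will simultaneously give that $A_{i_0}$ is a subdirectly irreducible module and that $M_{i_0}$ is its socle. Let $N\subseteq A_{i_0}$ be an arbitrary nonzero submodule. Applying Lemma~\ref{lmexm1:congr} with $N$ in the role of the distinguished submodule --- that is, setting $N_{i_0}=N$ and $N_i=\varphi_{i_0,i}(N)$ for $i\neq i_0$ --- produces a family satisfying~\eqref{cond:lm3:congruences}, hence by Theorem~\ref{thm:congruences} a congruence $\sigma\subseteq\pi$ whose restriction to $A_{i_0}$ corresponds to $N$. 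Since $N_{i_0}=N\neq\{0\}$, the congruence $\sigma$ is nonzero, so subdirect irreducibility of $Q$ forces $\mu\subseteq\sigma$. Restricting this inclusion to $A_{i_0}$ and reading it through the correspondence gives $M_{i_0}\subseteq N$: explicitly, for each $m\in M_{i_0}$ we have $m\,\mu\,0$, hence $m\,\sigma\,0$, hence $m\in N$. As $N$ was an arbitrary nonzero submodule, $M_{i_0}$ lies inside every nonzero submodule of $A_{i_0}$, so it is the unique least nonzero submodule.

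Finally I would assemble the conclusion. A module with a least nonzero submodule is subdirectly irreducible, and that least submodule, being contained in every nonzero submodule, has no proper nonzero submodules and is therefore simple; consequently it equals the socle under either reading (the monolith submodule, or the sum of all simple submodules, which coincide here). Thus $A_{i_0}$ is a subdirectly irreducible $\Z[t,t^{-1}]$-module whose socle is $M_{i_0}$, and $M_{i_0}$ is a nontrivial block of the monolith $\mu$ of $Q$, as claimed. The step I expect to be the main obstacle is the core step: one must verify that the congruence $\sigma$ built from an arbitrary nonzero submodule $N$ is genuinely nonzero and lies below $\pi$, so that the correspondence of Theorem~\ref{thm:congruences} legitimately applies, and that the containment $\mu\subseteq\sigma$ restricts exactly to the module inclusion $M_{i_0}\subseteq N$ rather than to something weaker.
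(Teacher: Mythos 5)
Your proposal is correct and follows essentially the same route as the paper: both proofs translate subdirect irreducibility of $Q$ into the least-nonzero-submodule property of an orbit module via Theorem~\ref{thm:congruences} together with Lemma~\ref{lmexm1:congr}, which turns any nonzero submodule $N$ of $A_{i_0}$ into a nonzero congruence below $\pi$ that must contain $\mu$. The only difference is cosmetic: the paper fixes a pair $(a,b)$ with $\mu=\Theta(a,b)$ and works with the cyclic submodule generated by $a-b$, whereas you argue directly that the restriction submodule $M_{i_0}$ lies in every nonzero submodule, a mild streamlining of the same argument.
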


\begin{proof}
By Theorem \ref{thm:congruences}, for each $i\in I$, there is an order preserving mapping $\alpha_i$ from the interval $[0;\pi]$ in the congruence lattice of $Q$ restricted to the orbit $A_i$, to the lattice of $\Z[t,t^{-1}]$-submodules of $A_i$. By Lemma \ref{lmexm1:congr}, any non-trivial submodule $N_i$ od $A_i$ defines a non-trivial congruence $\nu\subseteq\pi$ on the quandle $Q$ such that $N_i=\alpha_i(\nu)$.

Let $\mu$ be the monolith of $Q$. Since $\mu$ is included in any non-trivial congruence of $Q$, we have that $\alpha_i(\mu)\subseteq\alpha_i(\nu)=N_i$ for any non-trivial submodule $N_i$ of $A_i$. This implies that $\alpha_i(\mu)=\{0\}$ or $\alpha_i(\mu)$ is a socle of $A_i$.

By the minimality of the monolith $\mu\subseteq\pi$ there exist $i_0\in I$
and $a \neq b\in A_{i_0}$, such that $\mu=\Theta(a,b)$.
By Theorem \ref{thm:congruences} there are submodules $M_i$ of $A_i$ such that for all $i,j\in I$, $\varphi_{i,j}(M_i)\subseteq M_j$ and for $x,y\in Q$, $(x,y)\in\mu$ if and only if there exists $i\in I$ such that $x,y\in A_i$ and $x-y\in {M_i}$. In particular there exists
a submodule $M_{i_0}=\alpha_{i_0}(\mu)$ of $A_{i_0}$ with $a-b\in M_{i_0}$. This shows that $M_{i_0}$ is the unique non-trivial minimal submodule of the module $A_{i_0}$ and $A_{i_0}$ is a subdirectly irreducible $\Z[t,t^{-1}]$-module.
\end{proof}

\begin{remark}\label{R:locfin}
It is not difficult to observe that subdirectly irreducible modules over finitely generated commutative rings are locally finite. Indeed, each finitely generated SI module over a finitely generated commutative ring has a composition series \cite[Corollary 3.6]{Ja} and each simple module over a finitely generated commutative ring is finite (a field which is finitely generated as a ring is finite). In particular, the socle in a subdirectly irreducible $\Z[t,t^{-1}]$-module must be finite.
\end{remark}

\section{Medial quandles of quasi-affine type}\label{sec:si}
In this section we will prove that all SI medial quandles of quasi-affine type are latin, thus connected. We will also show that non-connected finite SI medial quandles are reductive.

Recall, connected medial quandles are Alexander ones. Let $m\geq 1$ be a natural number and let $Q$ be an Alexander quandle $(A,f)$. Then
$$(((x\ast\underbrace{y)\ast y)\ast \dots )\ast y}_{m-\text{times}}=(1-f)^m(x)+(1-(1-f)^m)(y),$$
and $Q$ is $m$-reductive if and only if $(1-f)^m=0$. In
particular, the orbit $(A_i,1-\varphi_{i,i})$ of a medial quandle is $m$-reductive if
and only if $\varphi_{i,i}^m=0$.
Similarly,
$$(((x\ld\underbrace{ y)\ld y)\ld \dots )\ld y}_{m-\text{times}}=(1-f^{-1})^m(x)+(1-(1-f^{-1})^m)(y).$$
Moreover,
$$
(1-f)^m=0 \mathrel{\Leftrightarrow} ((1-f)f^{-1})^m=0 \mathrel{\Leftrightarrow} (f^{-1}-1)^m=0 \mathrel{\Leftrightarrow} (1-f^{-1})^m=0,
$$
which means that an Alexander quandle is $m$-reductive with respect to multiplication if and only if it is $m$-reductive with respect to left division.
This observation can be generalized for all medial quandles, not only Alexander ones;
it is actually a consequence of the following theorems, presented in \cite{JPSZ}.

\begin{theorem}\cite[Theorem 6.6]{JPSZ}\label{thm:reductive}
Let $Q$ be a medial quandle. Then the following statements are equivalent.
\begin{enumerate}
    \item $Q$ is reductive.
    \item At least one orbit of $Q$ is reductive.
    \item All the orbits of $Q$ are reductive.
\end{enumerate}
\end{theorem}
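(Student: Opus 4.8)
The plan is to translate everything into the mesh language of Theorem~\ref{thm:decomposition} and reduce reductivity of an orbit to nilpotency of a single homomorphism. Writing $Q$ as the sum of an indecomposable affine mesh $(A_i;\varphi_{i,j};c_{i,j})$, each orbit $A_i$ is the Alexander quandle $(A_i,1-\varphi_{i,i})$, and by the computation preceding the theorem the orbit $A_i$ is $m$-reductive if and only if $\varphi_{i,i}^m=0$. So the whole statement becomes a statement about nilpotency of the diagonal maps $\varphi_{i,i}$, and I would work entirely with the $\varphi_{i,j}$. The implication (1)$\Rightarrow$(3) is then immediate: if $Q$ is $m$-reductive, the identity holds in particular for $x,y$ in a fixed orbit $A_i$ (a subquandle), so every $A_i$ is $m$-reductive. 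The implication (3)$\Rightarrow$(2) is trivial since $Q$ has at least one orbit.

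The substance is (2)$\Rightarrow$(1). First I would extract the commuting relations hidden in (M3): taking the intermediate index equal to the source and then to the target yields $\varphi_{j,j}\varphi_{i,j}=\varphi_{i,j}\varphi_{i,i}$ and $\varphi_{i,i}\varphi_{j,i}=\varphi_{j,i}\varphi_{j,j}$, whence by induction $\varphi_{j,j}^{\,n}\varphi_{i,j}=\varphi_{i,j}\varphi_{i,i}^{\,n}$ and $\varphi_{i,i}^{\,n}\varphi_{j,i}=\varphi_{j,i}\varphi_{j,j}^{\,n}$ for all $n$. If the orbit $A_k$ is reductive, say $\varphi_{k,k}^r=0$, the second relation immediately gives $\varphi_{i,i}^{\,r}\varphi_{k,i}=0$, so $\varphi_{i,i}^{\,r}$ annihilates $\im(\varphi_{k,i})$ for every $i$. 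The next goal is to upgrade this to nilpotency of each $\varphi_{i,i}$ on all of $A_i$, for which I would invoke indecomposability, $A_i=\bigl\langle\bigcup_{l}\bigl(c_{l,i}+\im(\varphi_{l,i})\bigr)\bigr\rangle$, controlling the generators coming from the other images $\im(\varphi_{l,i})$ and from the constants $c_{l,i}$ (the latter via (M4)), and aiming at an exponent that does not depend on $i$.

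Once every $\varphi_{i,i}$ is nilpotent with a common bound $m$, the passage to the whole quandle is a short calculation: for $y\in A_j$ and arbitrary $x$, the first right multiplication sends $x$ into $A_j$, and each further right multiplication by $y$ stays in $A_j$ and acts by the Alexander operation of $(A_j,1-\varphi_{j,j})$, so after $n$ multiplications the result is
$$\varphi_{j,j}^{\,n-1}(x\ast y)+\bigl(1-\varphi_{j,j}^{\,n-1}\bigr)(y),$$
which collapses to $y$ as soon as $n-1\ge m$; hence $Q$ is $(m+1)$-reductive. The main obstacle is exactly the propagation inside (2)$\Rightarrow$(1): the relation $\varphi_{i,i}^{\,r}\varphi_{k,i}=0$ only governs the contribution of the distinguished reductive orbit, whereas indecomposability expresses $A_i$ through all the images $\im(\varphi_{l,i})$ and constants $c_{l,i}$. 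Turning this partial annihilation into genuine nilpotency of $\varphi_{i,i}$, and doing so with an exponent uniform in $i$ so that a single reduction length works simultaneously for every orbit, is where (M3), (M4) and indecomposability have to be combined with care; the concluding reduction-length computation is then routine.
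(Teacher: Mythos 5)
Your skeleton is sound and is in fact the same route the paper takes (reduce orbit reductivity to nilpotency of the diagonal maps $\varphi_{i,i}$, propagate nilpotency through the mesh, then apply the reduction-length formula of Proposition~\ref{prop:reductive}), but there is a genuine gap: the step you yourself label ``the main obstacle'' is the entire content of (2)$\Rightarrow$(1), and you never carry it out. From $\varphi_{k,k}^{r}=0$ you derive only $\varphi_{i,i}^{r}\varphi_{k,i}=0$, i.e.\ annihilation of the generators of $A_i$ contributed by the one reductive orbit $A_k$; the generators $\im(\varphi_{l,i})$ for $l\neq k$ and the constants $c_{l,i}$ are deferred to an unspecified combination of (M3), (M4) and indecomposability. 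Naming the ingredients is not the same as combining them, and the combination is exactly what this paper isolates as Lemma~\ref{lem:m-reductive} (stated there precisely as a sharpening of the corresponding step in the proof of \cite[Theorem 6.6]{JPSZ}).

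The missing trick is to use (M3) with the reductive orbit as the \emph{intermediate} index, not only the two special cases you extracted. With $\varphi_{k,k}^{r}=0$, target index $i$ and arbitrary $l$, (M3) gives $\varphi_{i,i}\varphi_{l,i}=\varphi_{k,i}\varphi_{l,k}$, whence by your intertwining relation $\varphi_{i,i}^{r}\varphi_{k,i}=\varphi_{k,i}\varphi_{k,k}^{r}$ one gets
$$\varphi_{i,i}^{r+1}\varphi_{l,i}=\varphi_{i,i}^{r}\varphi_{k,i}\varphi_{l,k}=\varphi_{k,i}\varphi_{k,k}^{r}\varphi_{l,k}=0.$$
For the constants, (M4) gives $\varphi_{k,i}(c_{l,k})=\varphi_{i,i}(c_{l,i}-c_{k,i})$, so $\varphi_{i,i}^{r+1}(c_{l,i}-c_{k,i})=\varphi_{i,i}^{r}\varphi_{k,i}(c_{l,k})=\varphi_{k,i}\varphi_{k,k}^{r}(c_{l,k})=0$ for every $l$; taking $l=i$ and invoking (M2) ($c_{i,i}=0$), which your outline never uses but which is indispensable here to anchor the differences, yields $\varphi_{i,i}^{r+1}(c_{k,i})=0$ and hence $\varphi_{i,i}^{r+1}(c_{l,i})=0$ for all $l$. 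Since the group endomorphism $\varphi_{i,i}^{r+1}$ kills every generator $c_{l,i}$ and every $\varphi_{l,i}(a)$ supplied by indecomposability, $\varphi_{i,i}^{r+1}=0$. Note that the uniformity in $i$ you worried about is free: the exponent $r+1$ works simultaneously for every orbit, and your (correct) closing formula $\varphi_{j,j}^{\,n-1}(x\ast y)+(1-\varphi_{j,j}^{\,n-1})(y)$ then gives that $Q$ is $(r+2)$-reductive, in agreement with Corollary~\ref{cor1:reductive} and Proposition~\ref{prop:reductive}. The rest of your proposal — (1)$\Rightarrow$(3) by restriction to orbits, (3)$\Rightarrow$(2) trivially — is correct as stated.
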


\begin{prop}\cite[Proposition 6.2]{JPSZ}\label{prop:reductive}
Let $\mathcal A=(A_i;\,\varphi_{i,j};\,c_{i,j})$ be an indecomposable affine mesh over a set $I$. Then the sum of $\mathcal A$ is $m$-reductive if and only if, for every $i\in I$, $$\varphi_{i,i}^{m-1}=0.$$
\end{prop}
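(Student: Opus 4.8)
The plan is to evaluate the $m$-fold right multiplication by a fixed element in closed form and to read off exactly when it collapses to that element. Fix $x\in A_i$ and $y\in A_j$ for arbitrary $i,j\in I$. The structural point driving the whole argument is that the \emph{first} product already lands in the orbit of $y$: by the definition of the sum of a mesh,
$$x\ast y=c_{i,j}+\varphi_{i,j}(x)+(1-\varphi_{j,j})(y)\in A_j,$$
so the remaining $m-1$ multiplications by $y$ all take place inside the single orbit $A_j$, which (see Section~\ref{sec:decomposition theorem}) is the Alexander quandle $(A_j,1-\varphi_{j,j})$.

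Inside $A_j$ I would use the iterated-product formula recalled at the start of this section, with $f=1-\varphi_{j,j}$ so that $1-f=\varphi_{j,j}$: for every $a\in A_j$ and $k\geq 0$,
$$\underbrace{(\cdots(a\ast y)\ast\cdots)\ast y}_{k\text{-times}}=\varphi_{j,j}^{k}(a)+(1-\varphi_{j,j}^{k})(y),$$
which is a routine induction on $k$. Applying this with $a=x\ast y$ and $k=m-1$ and substituting the expression above for $x\ast y$, all the $y$-terms telescope and one obtains the closed form
$$\underbrace{(\cdots(x\ast y)\ast\cdots)\ast y}_{m\text{-times}}=\varphi_{j,j}^{m-1}(c_{i,j})+\varphi_{j,j}^{m-1}\varphi_{i,j}(x)+(1-\varphi_{j,j}^{m})(y).$$
The sum of $\mathcal A$ is $m$-reductive precisely when this equals $y$ for all $x,y$ and all $i,j$; comparing the term in $x$, the constant term and the term in $y$ separately, this is equivalent to the three conditions $\varphi_{j,j}^{m-1}\varphi_{i,j}=0$, $\varphi_{j,j}^{m-1}(c_{i,j})=0$ and $\varphi_{j,j}^{m}=0$ holding for all $i,j\in I$.

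From here the backward implication is immediate: if $\varphi_{i,i}^{m-1}=0$ for every $i$, then $\varphi_{j,j}^{m-1}$ kills everything in the closed form and it reduces to $y$. For the forward implication the delicate point -- and the reason the exponent drops from $m$ to $m-1$ -- is that the first two conditions say exactly that $\varphi_{j,j}^{m-1}$ annihilates every generator $c_{i,j}$ and $\varphi_{i,j}(a)$ of $A_j$. Here indecomposability is essential: since $A_j=\langle\bigcup_{i\in I}(c_{i,j}+\im(\varphi_{i,j}))\rangle$ and $\varphi_{j,j}^{m-1}$ is a homomorphism vanishing on all these generators, it vanishes on all of $A_j$, i.e.\ $\varphi_{j,j}^{m-1}=0$. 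I expect this passage from ``vanishing on the generators'' to ``vanishing on $A_j$'' to be the only genuine step; note that the orbit-wise analysis alone yields merely $\varphi_{j,j}^{m}=0$ from the third condition, and it is precisely the cross-orbit contribution $c_{i,j}+\varphi_{i,j}(x)$ carried in by the first multiplication, combined with indecomposability, that sharpens $m$ to $m-1$.
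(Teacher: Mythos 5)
Your proof is correct: the closed form $\varphi_{j,j}^{m-1}(c_{i,j})+\varphi_{j,j}^{m-1}\varphi_{i,j}(x)+(1-\varphi_{j,j}^{m})(y)$ for the $m$-fold right multiplication is right, and the passage via indecomposability from vanishing on the generators $c_{i,j}$ and $\varphi_{i,j}(a)$ of $A_j$ to $\varphi_{j,j}^{m-1}=0$ is exactly the decisive step that sharpens the exponent from $m$ to $m-1$. The paper imports this proposition from \cite{JPSZ} without reproducing a proof, and your argument is essentially the standard one — the same orbit-wise computation combined with checking on mesh generators that this paper itself employs again in Lemma~\ref{lem:m-reductive}.
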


In particular, a medial quandle $Q$ is $m$-reductive if and only if all the orbits of $Q$ are $(m-1)$-reductive and
is 2-reductive if and only if every orbit is a projection quandle.
\vskip 3mm

\begin{rem}\label{rem:qa}
Recall that using Kearnes' classification of SI modes \cite{K99} one can conclude that all SI medial quandles are
either of quasi-affine type or of set type.
Let $Q$ be a non-projection SI medial quandle with the monolith $\mu$ and let $A_{i_0}$ be a subdirectly irreducible $\Z[t,t^{-1}]$-module, for some $i_0\in I$, which exists by Theorem \ref{thm:simodule}.
If $S_{i_0}\subseteq A_{i_0}$ is the socle of $A_{i_0}$, then either $\varphi_{i_0,i_0}|_{S_{i_0}}$ is a bijection or $\varphi_{i_0,i_0}|_{S_{i_0}}=0$. In the first case, $S_{i_0}$ is a latin quandle and $Q$ is of quasi-affine type. In the second case, $S_{i_0}$ is a projection quandle and $Q$ is of set type. Since, as shown in \cite{K99}, algebras supported by non-trivial monolith blocks embed into one another, one obtains that either all of them are latin quandles, or projection ones.
\end{rem}

For latin quandles we conclude that each block of their non-trivial congruences, as a subalgebra, has a cancellative binary operation.
It was shown in \cite[Proposition 7.2]{HSV} that a finite medial quandle is connected if and only if it is latin.
Hence, finite connected SI medial quandles are of quasi-affine type.
Moreover, each infinite subdirectly irreducible idempotent medial quasigroup is an example of infinite connected SI medial quandle of quasi-affine type.
An example is the following:

\begin{example}\label{exm:Prugr1}
Let $\Z_{p^\infty}$ be the Pr\"ufer group where $p$ is a prime. According to \cite[Theorem 3.29]{Ber} Pr\"ufer groups are the only infinite SI abelian groups. There are many representations of such groups, e.g.
$$\Z_{p^\infty}=\left\{ \left[ \frac a{p^k}\right]_\sim \mid a,k\in\mathbb{N}\right\}$$
where $\frac{a}{p^k}\sim\frac{b}{p^n}$ if and only if $ap^n\equiv bp^k \pmod{p^{k+n}}$,
and the operation~$+$ on $\Z_{p^\infty}$ is the same as in~$\mathbb{Q}$.

Take now $Q$ the Alexander quandle $(\Z_{p^\infty},-1)$ for $p>2$ a prime. The multiplication by~$2$ is a bijection on $\Z_{p^\infty}$ and therefore~$Q$ is connected and latin.
\end{example}

Example \ref{exm:Prugr} shows that there exist also infinite connected SI medial quandles which are of set type.
On the other hand, each medial quandle of quasi-affine type is connected.

\begin{theorem}\label{thm:KK}
Let $Q$ be a subdirectly irreducible medial quandle of quasi-affine type. Then $Q$ is latin.
\end{theorem}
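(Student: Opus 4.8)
The plan is to reduce the statement to connectedness and then to prove connectedness. First I record what ``latin'' means for the mesh: if $e\in A_j$, then the right translation $R_e$ sends each summand $A_i$ into $A_j$ by $x\mapsto c_{i,j}+\varphi_{i,j}(x)+(1-\varphi_{j,j})(e)$. Hence the image of $R_e$ is contained in the single orbit $A_j$, so $R_e$ can be onto $Q$ only when $Q$ has one orbit; and in that case $R_e$ is a bijection of the orbit $A_{i_0}$ exactly when $\varphi_{i_0,i_0}$ is a bijection. Thus $Q$ is latin if and only if $|I|=1$ and $\varphi_{i_0,i_0}$ is bijective. Moreover, if $|I|=1$ then indecomposability (Theorem \ref{thm:decomposition}) together with (M2) gives $A_{i_0}=\langle\im(\varphi_{i_0,i_0})\rangle=\im(\varphi_{i_0,i_0})$, so $\varphi_{i_0,i_0}$ is automatically surjective. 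Therefore it suffices to prove that $Q$ is connected and that $\varphi_{i_0,i_0}$ is injective.

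Fix $i_0$ as in Theorem \ref{thm:simodule}: $A_{i_0}$ is a cocyclic $\Z[t,t^{-1}]$-module with simple socle $S$, and the monolith $\mu$ is carried by the submodule family $M_{i_0}=S$, $M_i=\varphi_{i_0,i}(S)$. Since $Q$ is of quasi-affine type, Remark \ref{rem:qa} gives that $\varphi_{i_0,i_0}|_S$ is a bijection. The engine of the argument is the following consequence of subdirect irreducibility: by Theorem \ref{thm:congruences} every submodule family from Lemma \ref{lmex:clas2} (or Lemma \ref{lmexm1:congr}) defines a congruence below $\pi$, and any such congruence that restricts trivially to $A_{i_0}$ must be trivial, for otherwise it would be a non-trivial congruence not containing the non-zero block $S$ of the monolith. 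Applying this to $(\Ker\varphi_{i,i_0})_{i\in I}$ and to $(\Ker\varphi_{i,i})_{i\in I}$, whose $i_0$-blocks are $\Ker\varphi_{i_0,i_0}$, and using that $\varphi_{i_0,i_0}|_S$ is injective (so $S\cap\Ker\varphi_{i_0,i_0}=0$, whence $\Ker\varphi_{i_0,i_0}=0$ because $S$ is the unique minimal submodule), I obtain that $\varphi_{i_0,i_0}$ is injective and, more generally, that $\Ker\varphi_{i,i_0}=\Ker\varphi_{i,i}=0$ for all $i$.

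To see that every orbit is ``active'', I would use condition (M3) in the form $\varphi_{j,i_0}\varphi_{i_0,j}=\varphi_{i_0,i_0}^2$, valid for each $j\in I$; restricting to $S$ and using that $\varphi_{i_0,i_0}^2|_S$ is a bijection forces $\varphi_{i_0,j}(S)\neq 0$, so $\varphi_{i_0,j}|_S$ is injective (the socle being simple) for every $j$. Consequently, for each fixed $j$ the family $(\Ker\varphi_{i,j})_{i\in I}$ has $i_0$-block $\Ker\varphi_{i_0,j}=0$ and is therefore trivial, i.e.\ \emph{all} the homomorphisms $\varphi_{i,j}$ are injective; each orbit module $A_j$ is then cocyclic with socle $\varphi_{i_0,j}(S)\cong S$, a non-trivial monolith block and hence latin by Remark \ref{rem:qa}.

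It remains to prove connectedness, which I expect to be the main obstacle. At this point all cross-homomorphisms $\varphi_{i,j}$ are injective, every orbit embeds into $A_{i_0}$ via $\varphi_{i,i_0}$, and (M3) pins down the composites as powers of $\varphi_{i_0,i_0}$. The difficulty is that the congruence lattice alone no longer forbids several orbits: injectivity of the maps into $A_{i_0}$ precisely blocks the naive ``isolate a second orbit'' contradiction, since any valid submodule family with $M_{i_0}=0$ is forced to be the zero family. To close the gap I would combine indecomposability with the finiteness built into cocyclic modules — over the finitely generated ring $\Z[t,t^{-1}]$ an SI module is Artinian and its socle is finite, as noted after Theorem \ref{thm:simodule} — to show that the injective endomorphism $\varphi_{i_0,i_0}$, being bijective on the essential socle $S$, must already be surjective on $A_{i_0}$, and that the extra generators $c_{i,i_0}+\im(\varphi_{i,i_0})$ contributed by other orbits cannot enlarge $A_{i_0}=\im(\varphi_{i_0,i_0})$ without producing a submodule family violating the uniqueness of the monolith. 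This forces $|I|=1$; together with injectivity and the surjectivity supplied by indecomposability, $\varphi_{i_0,i_0}$ is bijective, and so $Q$ is latin.
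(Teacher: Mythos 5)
Your reduction of the statement to connectedness plus injectivity of $\varphi_{i_0,i_0}$ is correct, and the first two thirds of your argument essentially reproduce the paper's proof: you use Remark~\ref{rem:qa} to get $\varphi_{i_0,i_0}|_S$ bijective, kill $\Ker(\varphi_{i_0,i_0})$ by cocyclicity (since $S\cap\Ker(\varphi_{i_0,i_0})=\{0\}$ forces the kernel to vanish), then use congruences coming from kernel families (Lemma~\ref{lmex:clas2}, Lemma~\ref{lmexm1:congr}, Theorem~\ref{thm:congruences}) together with subdirect irreducibility to conclude $\Ker(\varphi_{i,j})=\{0\}$ for all $i,j$; local finiteness of cocyclic $\Z[t,t^{-1}]$-modules then makes every $\varphi_{i,i}$ an automorphism, exactly as in the paper (your ``Artinian'' route to surjectivity of an injective endomorphism also works).

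The gap is the final step, and although you correctly located the obstacle, the fix you propose cannot work. Once every $\varphi_{i,i}$ is an automorphism, (M3) in the form $\varphi_{i,j}\varphi_{j,i}=\varphi_{j,j}^2$ forces \emph{every} $\varphi_{i,j}$ to be a bijection, and there genuinely exist indecomposable meshes of this shape with $|I|\geq 2$: take $A_i=A$ for all $i$, $\varphi_{i,j}=\varphi$ a fixed automorphism with $1-\varphi$ bijective, and $c_{i,j}=d_i-d_j$; the sum is (homologous to) the direct product of the latin quandle $(A,1-\varphi)$ with the projection quandle on~$I$. Such a quandle has every local property you established --- all orbits latin, all orbit modules cocyclic with the same socle --- so no argument about ``a submodule family violating the uniqueness of the monolith'' can force $|I|=1$. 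Indeed, by Theorem~\ref{thm:congruences} submodule families only parametrize congruences \emph{below}~$\pi$, and, as you yourself observe, once $M_{i_0}=\{0\}$ the whole family is forced to vanish; so below $\pi$ there is nothing left to exploit. The congruences that actually witness failure of subdirect irreducibility in the multi-orbit case are the kernels of the two product projections, which identify elements lying in \emph{different} orbits and hence are not below~$\pi$, i.e.\ invisible to your machinery. The paper closes exactly this gap by invoking the structure results of \cite{JPSZ} (Proposition 5.2 and Theorem 5.5): all orbits latin implies $Q$ is isomorphic to a direct product of a latin quandle and a projection quandle; subdirect irreducibility then forces one factor to be trivial, and the quasi-affine type excludes the projection factor. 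To complete your proof you must either cite these results or reprove them --- e.g.\ construct the cross-orbit congruence explicitly from the normalized constants and intersect it trivially with $\pi$ --- and your sketch does neither.
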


\begin{proof}
Let $Q$ be an SI medial quandle of quasi-affine type and let $Q$ be the sum of an affine mesh
$(A_i;\,\varphi_{i,j};\,c_{i,j})$ over a set $I$. By Theorem \ref{thm:simodule} there is $i_0\in I$ such that $A_{i_0}$ is a subdirectly irreducible $\Z[t,t^{-1}]$-module and, by Remark \ref{rem:qa},
$\varphi_{i_0,i_0}|_{S_{i_0}}$ is a bijection. By (M3), for each $i\in I$, $\varphi_{i_0,i}|_{S_{i_0}}$ and $\varphi_{i,i_0}|_{\varphi_{i_0,i}(S_{i_0})}$ must be injective.  This implies $S_{i_0}\cap \Ker(\varphi_{i_0,i})=\{0\}$ and $\varphi_{i_0,i}(S_{i_0})\cap \Ker(\varphi_{i,i_0})=\{0\}$.
If for some $i\in I$, $\Ker(\varphi_{i_0,i})\neq \{0\}$ or $\Ker(\varphi_{i,i_0})\neq \{0\}$, then submodules $S_{i_0}$ and respectively, $\Ker(\varphi_{i_0,i})$ or $\Ker(\varphi_{i,i_0})$, according to Lemma~\ref{lmexm1:congr},
define two non-trivial congruences  $\varrho_1$ and $\varrho_2$ on $Q$ such that $\varrho_1\cap\varrho_2$ is a trivial congruence on $Q$.

Hence, we can assume that, for each $i\in I$, $\Ker(\varphi_{i_0,i})=\{0\}$ and $\Ker(\varphi_{i,i_0})=\{0\}$. This implies, again by (M3), that for each $i\in I$, $\Ker(\varphi_{i,i}^2)=\{0\}$ and in consequence, for every $i,j\in I$, $\Ker(\varphi_{i,j})=\{0\}$.

Let now $M_i$ be a finitely generated
submodule of~$A_i$, for some $i\in I$. The module $M_i$ is subdirectly irreducible since it embeds into $A_{i_0}$ through the mapping~$\varphi_{i,i_0}$.
By Remark \ref{R:locfin}, $M_i$ is finite.
Since $\varphi_{i,i}=1-t$, we see that $\varphi_{i,i}$ embeds $M_i$ into~$M_i$. Therefore $\varphi_{i,i}$ is
a bijection on~$M_i$.

We now prove that $\varphi_{i,i}$ is a surjection on~$A_i$.
Indeed, let $a\in A_i$. The cyclic module generated by~$a$ is finite hence $\varphi_{i,i}$ is a bijection of~$\langle a\rangle$
and therefore there exists $b\in \langle a\rangle$ such that $\varphi_{i,i}(b)=a$.
This means that each $\varphi_{i,i}$ is an automorphism of~$A_i$ and
therefore all orbits are latin. Then, by \cite[Proposition 5.2]{JPSZ}, all orbits are isomorphic as quandles and by \cite[Theorem 5.5]{JPSZ}
$Q$ is isomorphic to a direct product of a latin quandle and a projection quandle. Since $Q$ is an SI medial quandle of quasi-affine type then $Q$ must be connected with one latin orbit.
\end{proof}

Note that Theorem \ref{thm:KK} is in fact a consequence of a deep result from universal algebra and as the referee suggests its proof follows quite immediately
from the proof of~\cite[Theorem 3.6]{K99}:
let $Q$ be an SI medial quandle of quasi-affine type with the monolith $\mu$. Thus each algebra $A$ supported by the monolith block is abelian, in the sense that in the direct power $A^2$, the diagonal $\{(a,a)\mid a\in A\}$ is a block of a congruence on $A^2$ \cite[Theorem 7.30]{Ber}. Hence, each such block has a Mal'cev operation $m(x,y,z)$. By \cite[Lemma 3.5 and Theorem 3.6]{K99} we can lift the identities $m(x,y,y)=x=m(y,y,x)$ from the monolith blocks to the algebra $Q$. Therefore, $m(x,y,z)$ is a Mal'cev term for $Q$ which implies that $Q$ is polynomially equivalent to an idempotent reduct of a module \cite[Corollary 6.3.2]{RS}. In particular, $Q$ is a cancellative Alexander quandle $(Q,f)$ in which $m(x,y,z)=x-y+z$. On the other side $m(x,y,z)$ is a derived operation of $(Q,f)$. Therefore $1-f$ is surjective and $Q$ is latin.

\begin{cor}\label{cor:canorb}
Let $Q$ be an SI medial quandle which is the sum of an affine mesh
$(A_i;\,\varphi_{i,j};\,c_{i,j})$ over a set $I$ and such that for each $i\in I$, $|A_i|> 1$. If $Q$ has at least one cancellative orbit then $Q$ is connected.
\end{cor}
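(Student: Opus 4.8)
The plan is to show that the hypotheses force $Q$ to be of quasi-affine type, after which Theorem~\ref{thm:KK} immediately gives that $Q$ is latin, hence connected. First I would note that, since every orbit has more than one element, $Q$ is not a projection quandle, so Theorem~\ref{thm:simodule} applies: there is $i_0\in I$ with $A_{i_0}$ subdirectly irreducible as a $\Z[t,t^{-1}]$-module, whose socle $S_{i_0}$ is the unique (hence simple) minimal submodule and carries a non-trivial block of the monolith $\mu$. By Remark~\ref{rem:qa}, $Q$ is of quasi-affine type or of set type, so it suffices to rule out the latter. I would argue by contradiction, assuming $Q$ is of set type; then $\varphi_{i_0,i_0}|_{S_{i_0}}=0$, i.e. $S_{i_0}\subseteq\Ker(\varphi_{i_0,i_0})$. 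Throughout, cancellativity of the orbit $A_j$ amounts to injectivity of $\varphi_{j,j}$ (the left translations are automorphisms anyway).

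Let $A_j$ be the cancellative orbit, so $\varphi_{j,j}$ is injective. If $j=i_0$ we finish at once, since $S_{i_0}\subseteq\Ker(\varphi_{i_0,i_0})=\Ker(\varphi_{j,j})$ is non-trivial, contradicting injectivity; so assume $j\neq i_0$. The key first step is to locate the socle inside $\im(\varphi_{j,i_0})$. Picking any non-zero $v_0\in A_j$, the cyclic submodule $\langle v_0\rangle$ determines, via Lemma~\ref{lmexm1:congr}, a non-trivial congruence whose restriction to $A_{i_0}$ is $\varphi_{j,i_0}(\langle v_0\rangle)$; subdirect irreducibility forces $\mu$ below it, so $S_{i_0}\subseteq\varphi_{j,i_0}(\langle v_0\rangle)\subseteq\im(\varphi_{j,i_0})$. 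Hence I may choose $w\in A_j$ with $\varphi_{j,i_0}(w)=s$ for a fixed non-zero $s\in S_{i_0}$; setting $N_j=\langle w\rangle$ and using that $S_{i_0}$ is simple gives $\varphi_{j,i_0}(N_j)=\langle s\rangle=S_{i_0}$.

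Now I would split on $K:=N_j\cap\Ker(\varphi_{j,i_0})$. If $K\neq\{0\}$, then $K$ generates (again by Lemma~\ref{lmexm1:congr}) a non-trivial congruence whose $A_{i_0}$-part is $\varphi_{j,i_0}(K)=\{0\}$, so it cannot contain $\mu$ (whose $A_{i_0}$-part is $S_{i_0}\neq\{0\}$), contradicting subdirect irreducibility. If $K=\{0\}$, I invoke (M3): for $x\in N_j$ we have $\varphi_{j,i_0}(\varphi_{j,j}(x))=\varphi_{i_0,i_0}(\varphi_{j,i_0}(x))$, and since $\varphi_{j,i_0}(x)\in S_{i_0}$ the right-hand side is $0$; as $N_j$ is a submodule (closed under $\varphi_{j,j}=1-t$), we get $\varphi_{j,j}(N_j)\subseteq N_j\cap\Ker(\varphi_{j,i_0})=K=\{0\}$, so $N_j\subseteq\Ker(\varphi_{j,j})$ is non-trivial, again contradicting injectivity of $\varphi_{j,j}$. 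Either branch is impossible, so $Q$ is of quasi-affine type, and Theorem~\ref{thm:KK} completes the proof.

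The step I expect to be the main obstacle is exactly the reason a one-line argument fails: a cancellative orbit need not itself carry a non-trivial monolith block (the candidate block $\varphi_{i_0,j}(S_{i_0})$ may vanish), so one cannot merely invoke the principle that a cancellative monolith block forces quasi-affine type. The real work is to transport the socle between $A_{i_0}$ and the cancellative orbit through $\varphi_{j,i_0}$, controlling its image and kernel by the subdirect-irreducibility dichotomy together with the commuting relation (M3). The delicate case is $K=\{0\}$, where (M3) combined with $\varphi_{i_0,i_0}|_{S_{i_0}}=0$ collapses $\varphi_{j,j}$ on all of $N_j$; this is the crux of the argument.
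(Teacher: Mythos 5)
Your proof is correct, and it shares the paper's skeleton --- Theorem~\ref{thm:simodule} to obtain an SI module $A_{i_0}$ with socle $S_{i_0}$, the dichotomy of Remark~\ref{rem:qa}, and Theorem~\ref{thm:KK} to finish --- but the middle is handled quite differently. The paper argues directly rather than by contradiction: from $\Ker(\varphi_{j,j})=\{0\}$ it applies the instance $\varphi_{k,j}\varphi_{j,k}=\varphi_{j,j}^2$ of (M3) to conclude at once that \emph{every} $\varphi_{j,k}$ is injective; then $\varphi_{j,i_0}(A_j)$ is a nonzero submodule of the SI module $A_{i_0}$, so it contains $S_{i_0}$ by the pure module-theoretic fact that the socle of an SI module lies in every nonzero submodule, and it is a cancellative subquandle of $(A_{i_0},1-\varphi_{i_0,i_0})$ (the embedding $\varphi_{j,i_0}$ is a $\Z[t,t^{-1}]$-homomorphism, hence a quandle embedding), forcing $\varphi_{i_0,i_0}|_{S_{i_0}}\neq 0$ and thus quasi-affine type. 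You never establish global injectivity of $\varphi_{j,i_0}$; instead you transport a cyclic submodule $N_j$ onto the socle via Lemma~\ref{lmexm1:congr} and monolith minimality, then split on $K=N_j\cap\Ker(\varphi_{j,i_0})$: your $K=\{0\}$ branch is essentially the paper's (M3) computation run in the contradiction direction, and your $K\neq\{0\}$ branch is exactly the case that the paper's up-front injectivity renders vacuous. What your route buys is that it leans only on the congruence correspondence (Theorem~\ref{thm:congruences}, Lemma~\ref{lmexm1:congr}) already needed for Theorem~\ref{thm:simodule}; what the paper's buys is brevity and the stronger intermediate conclusion that the socle itself sits inside a cancellative subquandle. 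One simplification available to you: once $\varphi_{j,i_0}(\langle v_0\rangle)\neq\{0\}$ is known, the containment $S_{i_0}\subseteq\varphi_{j,i_0}(\langle v_0\rangle)$ follows from the socle fact alone, so your congruence detour there is only really needed to exclude a zero image --- which the paper's injectivity argument disposes of globally in one line.
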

\begin{proof}
Let $Q$ be an SI medial quandle with orbits which have at least two elements and suppose that for some $i\in I$, $\Ker(\varphi_{i,i})=\{0\}$. Then by (M3), $\Ker(\varphi_{i,j})=\{0\}$ for each $j\in I$. By Theorem \ref{thm:simodule} there exists $i_0\in I$ such that $A_{i_0}$ is a subdirectly irreducible module. Since $\varphi_{i,i_0}$ is an injection and, by assumption $|A_i|>1$, we obtain that $\varphi_{i,i_0}(A_i)$ is a cancellative subquandle of the Alexander quandle $(A_{i_0},1-\varphi_{i_0,i_0})$. Hence, the socle $S_{i_0}\subseteq \varphi_{i,i_0}(A_i)$ is also cancellative and it follows that $Q$ is of quasi-affine type. By Theorem \ref{thm:KK}, $Q$ must be connected.
\end{proof}

By Theorem \ref{thm:KK} all non-connected SI medial quandles are of set type. The next result shows that in a finite case such quandles are reductive.
\begin{theorem}\label{finite_si}
Let $Q$ be a finite non-connected non-projection subdirectly irreducible medial quandle.
Then $Q$ is reductive.
\end{theorem}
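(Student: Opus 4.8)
The plan is to argue by contradiction: assuming $Q$ is \emph{not} reductive, I will produce two non-trivial congruences on $Q$ whose intersection is the identity, contradicting subdirect irreducibility. Write $Q$ as the sum of an affine mesh $(A_i;\varphi_{i,j};c_{i,j})$; since $Q$ is finite, each $A_i$ is a finite module, so the endomorphism $\varphi_{i,i}$ admits a Fitting decomposition $A_i=N_i\oplus D_i$, where $N_i=\bigcup_{n}\Ker(\varphi_{i,i}^n)$ and $D_i=\bigcap_{n}\varphi_{i,i}^n(A_i)$, with $\varphi_{i,i}$ nilpotent on $N_i$ and bijective on $D_i$. By Proposition~\ref{prop:reductive} and Theorem~\ref{thm:reductive}, $Q$ is reductive exactly when every $\varphi_{i,i}$ is nilpotent, i.e.\ when every $D_i=\{0\}$. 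A one-element orbit is a projection quandle, hence reductive, so Theorem~\ref{thm:reductive} forces a non-reductive $Q$ to have \emph{all} orbits non-trivial with \emph{no} $\varphi_{i,i}$ nilpotent; thus $D_i\neq\{0\}$ for every $i$, in particular $D_{i_0}\neq\{0\}$ for the distinguished index $i_0$ provided by Theorem~\ref{thm:simodule}.

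Next I would check that both families $(D_i)_{i\in I}$ and $(N_i)_{i\in I}$ satisfy Condition~\eqref{cond:lm3:congruences} and so, by Theorem~\ref{thm:congruences}, determine congruences $\delta$ and $\nu$ below $\pi$. For $(D_i)$ this is exactly Lemma~\ref{lmex:clas2}. For $(N_i)$ the key is the relation $\varphi_{j,j}\varphi_{i,j}=\varphi_{i,j}\varphi_{i,i}$, obtained from (M3) by taking the four indices to be $(i,j,i,j)$; it iterates to $\varphi_{j,j}^n\varphi_{i,j}=\varphi_{i,j}\varphi_{i,i}^n$, so $\varphi_{i,i}^n(x)=0$ yields $\varphi_{j,j}^n\varphi_{i,j}(x)=0$, giving $\varphi_{i,j}(N_i)\subseteq N_j$. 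I also need that each $N_i$ is a genuine $\Z[t,t^{-1}]$-submodule, which holds since $t=1-\varphi_{i,i}$ and its inverse commute with $\varphi_{i,i}$ and therefore preserve each $\Ker(\varphi_{i,i}^n)$.

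Both congruences are non-trivial: $\delta$ because $D_{i_0}\neq\{0\}$, and $\nu$ because $N_{i_0}\neq\{0\}$. For the latter I use that $Q$, being non-connected, non-projection and SI, is of set type (Theorem~\ref{thm:KK} together with the dichotomy of Remark~\ref{rem:qa}); hence the socle $S_{i_0}$ of the subdirectly irreducible module $A_{i_0}$ satisfies $\varphi_{i_0,i_0}|_{S_{i_0}}=0$, so $\{0\}\neq S_{i_0}\subseteq\Ker(\varphi_{i_0,i_0})\subseteq N_{i_0}$. Finally, any congruence below $\pi$ is determined by its orbit restrictions, and on each orbit the meet $\delta\cap\nu$ corresponds to the submodule $D_i\cap N_i=\{0\}$ by the Fitting decomposition; moreover $\delta\cap\nu\subseteq\pi$ relates no two elements of distinct orbits. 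Thus $\delta\cap\nu$ is the identity congruence, while $\delta$ and $\nu$ are both non-trivial, contradicting the existence of a non-trivial monolith. Hence $Q$ is reductive.

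The main obstacle is the verification that $(N_i)_{i\in I}$ is compatible with the mesh, namely Condition~\eqref{cond:lm3:congruences}; the clean identity $\varphi_{j,j}^n\varphi_{i,j}=\varphi_{i,j}\varphi_{i,i}^n$ extracted from (M3) is precisely what makes the nilpotent part transport correctly between orbits, and is the crux of the argument. Everything else---Fitting's lemma for finite modules, the reduction of reductivity to nilpotency of the $\varphi_{i,i}$ via Theorem~\ref{thm:reductive} and Proposition~\ref{prop:reductive}, and the monolith argument---is routine once the two congruences are in place.
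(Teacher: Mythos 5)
Your proposal is correct, but it takes a heavier, more global route than the paper's. The paper argues directly (not by contradiction) inside the single subdirectly irreducible module $A_{i_0}$ supplied by Theorem \ref{thm:simodule}: set type gives $\Ker(\varphi_{i_0,i_0})\neq\{0\}$, finiteness gives $\varphi_{i_0,i_0}^{k+1}(A_{i_0})=\varphi_{i_0,i_0}^{k}(A_{i_0})$ for some $k$, and since $\varphi_{i_0,i_0}$ is then bijective on the stabilized image, that image meets the kernel trivially; because $A_{i_0}$ is SI \emph{as a module} (its socle lies in every nonzero submodule), the stabilized image must vanish, so $\varphi_{i_0,i_0}$ is nilpotent, one orbit is reductive, and Theorem \ref{thm:reductive} propagates reductivity to $Q$. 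You lift the same kernel-versus-eventual-image tension to the quandle level: Fitting decompositions $A_i=N_i\oplus D_i$ on every orbit, two families of submodules, Theorem \ref{thm:congruences}, and two nontrivial congruences meeting trivially. Your verification that $(N_i)_{i\in I}$ satisfies Condition \eqref{cond:lm3:congruences} via the identity $\varphi_{j,j}^n\varphi_{i,j}=\varphi_{i,j}\varphi_{i,i}^n$ extracted from (M3) is correct, and it is indeed the one nonroutine step of your route --- but note it is avoidable: once you have $N_{i_0}\neq\{0\}$ (from set type, exactly as the paper does) and $D_{i_0}\neq\{0\}$ (your contradiction hypothesis), the trivial intersection $N_{i_0}\cap D_{i_0}=\{0\}$ already contradicts subdirect irreducibility of the module $A_{i_0}$ itself, with no need to build quandle congruences, to check compatibility across orbits, or to know $D_i\neq\{0\}$ for the other indices. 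What your approach buys is a uniform quandle-level argument that exercises the congruence correspondence of Theorem \ref{thm:congruences} and yields the transport lemma for nilpotent parts as a reusable fact; what the paper's buys is brevity and a direct derivation of nilpotency localized at $i_0$, using the module-level SI property that Theorem \ref{thm:simodule} has already paid for.
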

\begin{proof}
Let $Q$ be a sum of an indecomposable affine mesh $(A_i;\varphi_{i,j};c_{i,j})$ over a set $I=\{1,\ldots ,n\}$ for $n\geq 2$. By Theorem \ref{thm:simodule} there exists $i_0\in I$ such that $A_{i_0}$ is a subdirectly irreducible module with the socle $S_{i_0}$. Since $Q$ is of set type, $\varphi_{i_0,i_0}\mid_{S_{i_0}}=0$ and hence $\Ker\varphi_{i_0,i_0}\neq\{0\}$.
Since $Q$ is finite, there exists $k$ such that $\varphi_{i_0,i_0}^{k+1}(A_{i_0})=\varphi_{i_0,i_0}^{k}(A_{i_0})$. Consider two submodules of the module $A_{i_0}$: $\Ker\varphi_{i_0,i_0}$ and $\varphi_{i_0,i_0}^k(A_{i_0})$. Since their intersection is trivial and $A_{i_0}$ is SI, one obtains that $\varphi_{i_0,i_0}^k(A_{i_0})=\{0\}$ and $A_{i_0}$ is reductive. By Theorem \ref{thm:reductive} this means that $Q$ is reductive.
\end{proof}

Theorem \ref{finite_si} cannot be extended to infinite quandles because there exist infinite SI medial quandles that are
neither reductive nor connected, see e.g. Example~\ref{exm:Prugr}. On the other hand, an SI medial quandle
cannot be both connected and reductive
or, in a finite case, non-connected and Alexander.

\begin{remark}\label{prop:non-connected}
The only medial quandle which is both connected and reductive consists of exactly one element. Indeed, let $Q$ be a connected non-trivial medial quandle. Then the mapping $\varphi_{1,1}$ in its canonical mesh has to be surjective, and therefore $\varphi_{1,1}^n=0$ for no number~$n$. Moreover, all reductive SI medial quandles are of set type.
\end{remark}

\section{Reductivity versus nilpotency}\label{sec:reductivity}
Theorem \ref{finite_si} shows that finite reductive medial quandles play an important role in the theory of medial quandles. Reductivity of medial quandles is closely related to the notion of nilpotency. A medial quandle $Q$ is
reductive if and only if its left multiplication group $\lmlt{Q}$ is nilpotent, as we shall see later in Theorem~\ref{th:nilpotent}.
Further, a nilpotent $\varphi_{i,i}$ can appear in an indecomposable affine mesh $\mathcal A=(A_i;\,\varphi_{i,j};\,c_{i,j})$
if and only if the sum of $\mathcal A$ is reductive.

In the sequel we focus on the nilpotency of $\lmlt{Q}$. We start with two auxiliary lemmas.

\begin{lemma}\label{lem:conjugation}\cite[Lemma 2.4]{JPZ17}
 Let~$Q$ be a medial quandle, $e$, $f\in Q$ and $\alpha\in\dis Q$.
 Then $^{L_{e}}\alpha=\; ^{L_{f}}\alpha$.
\end{lemma}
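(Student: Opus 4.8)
The plan is to reduce the claimed equality $^{L_e}\alpha={}^{L_f}\alpha$ to a single commutativity statement inside the displacement group, and then to invoke the fact that $\dis Q$ is abelian (the Proposition attributed to \cite{J82b} recalled above, valid because $Q$ is medial).

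First I would unravel the conjugation notation. By definition $^{L_e}\alpha=L_e\alpha L_e^{-1}$ and $^{L_f}\alpha=L_f\alpha L_f^{-1}$, so the two expressions coincide precisely when $L_f^{-1}L_e\,\alpha\,L_e^{-1}L_f=\alpha$. Writing $\gamma:=L_f^{-1}L_e$ and noting that $(L_f^{-1}L_e)^{-1}=L_e^{-1}L_f$, this is exactly the assertion that $\gamma\alpha\gamma^{-1}=\alpha$, i.e. that $\gamma$ commutes with $\alpha$. Thus the whole lemma is equivalent to showing that $L_f^{-1}L_e$ commutes with $\alpha$.

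The second step is to check that $\gamma=L_f^{-1}L_e$ actually lies in $\dis Q$. This follows from the description $\dis Q=\{L_{a_1}^{k_1}\cdots L_{a_n}^{k_n}\mid \sum_{i=1}^n k_i=0\}$ recorded just after the definition of the displacement group: the word $L_f^{-1}L_e$ has exponent sum $-1+1=0$, so $\gamma\in\dis Q$. Since $\alpha\in\dis Q$ by hypothesis and $\dis Q$ is a commutative group, $\gamma$ and $\alpha$ commute, which is exactly what the reduction of the first step requires. This completes the argument.

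I do not expect any genuine obstacle here: the entire content is the observation that passing from conjugation by $L_e$ to conjugation by $L_f$ amounts to an extra conjugation by $L_f^{-1}L_e\in\dis Q$, and that mediality makes $\dis Q$ abelian so this extra conjugation acts trivially on $\alpha\in\dis Q$. The only point demanding a moment of care is the membership $L_f^{-1}L_e\in\dis Q$, which is immediate from the exponent-sum characterization of $\dis Q$ rather than from the generating set $\{L_aL_b^{-1}\}$ directly.
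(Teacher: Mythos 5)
Your proof is correct and complete: the reduction to showing that $\gamma=L_f^{-1}L_e$ commutes with $\alpha$, the membership $\gamma\in\dis Q$ via the exponent-sum description of the displacement group, and the commutativity of $\dis Q$ for medial quandles (Joyce's criterion, recalled in the paper) are exactly the ingredients needed. The paper itself gives no proof here, citing \cite{JPZ17}; your argument is the standard one implicit in that reference, and it even avoids appealing to the normality of $\dis Q$ in $\lmlt Q$ by commuting $\gamma$ with $\alpha$ directly.
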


By $G'$ we will denote the \emph{commutator subgroup} (or \emph{derived subgroup}) of a group $G$, i.e. the subgroup generated by all the commutators of the group $G$.

It was proved in~\cite[Section 2]{HSV} that for any quandle~$Q$, $\lmlt{Q}'\trianglelefteq\dis Q$. If~$Q$ is medial, it means that $\lmlt{Q}'$ is abelian and therefore $[\lmlt{Q}',\lmlt{Q}']=1$. Although the following lemma is formulated for general groups, its setting in our
context results in a strong relation to nilpotency.

\begin{lemma}\label{lem:commutators}
 Let $G$ be a group with $[G',G']=1$. Then, for each $\alpha,\beta\in G'$ and $a,b,c,d\in G$:
 \begin{enumerate}
  \item $[a,bc]=[a,b]\cdot\; ^b[a,c]$,
  \item $[\alpha\beta,c]=[\alpha,c]\cdot[\beta,c]$,
  \item $[\;^b\alpha,c]=\;^{^cb}[\alpha,c]$,
  \item $[[a,bc],d]=[[a,b],d]\cdot\; ^{^db}[[a,c],d]$.
 \end{enumerate}
\end{lemma}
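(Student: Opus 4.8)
The plan is to reduce all four identities to a single structural fact: the hypothesis $[G',G']=1$ says that $G'$ is an \emph{abelian} normal subgroup, so the conjugation action of $G$ on $G'$ factors through the abelianization $G/G'$. Concretely, for $h\in G'$ and $x\in G$ the element $^x h$ depends only on the coset $xG'$: if $x=ky$ with $k\in G'$, then $^x h={}^k({}^y h)={}^y h$, because $^y h\in G'$ and $G'$ is abelian. In our setting this applies since $\lmlt Q'\trianglelefteq\dis Q$ and $\dis Q$ is commutative, so this observation is exactly what is needed to run the commutator calculus below.

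I would first record the universal commutator identities, valid in every group under the conventions $^\beta\alpha=\beta\alpha\beta^{-1}$ and $[a,b]=aba^{-1}b^{-1}$. Expanding both sides proves (1): the left side is $abca^{-1}c^{-1}b^{-1}$, and the right side $[a,b]\cdot{}^b[a,c]$ collapses to the same word after the internal cancellations $b^{-1}b$ and $a^{-1}a$; no hypothesis on $G$ is used here. For (2) I start from the universal expansion $[\alpha\beta,c]={}^\alpha[\beta,c]\cdot[\alpha,c]$. Here $\alpha\in G'$ and $[\beta,c]\in G'$, so $[G',G']=1$ lets me drop the conjugation, $^\alpha[\beta,c]=[\beta,c]$; since $[\alpha,c]$ and $[\beta,c]$ both lie in $G'$ and hence commute, I may reorder them to obtain $[\alpha,c]\cdot[\beta,c]$.

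Identity (3) is where the factoring-through-$G/G'$ observation does the real work, and I expect it to be the main obstacle. Writing each commutator in the form $[g,c]=g\,({}^c g)^{-1}$, I obtain $[{}^b\alpha,c]=({}^b\alpha)({}^{cb}\alpha)^{-1}$ and ${}^b[\alpha,c]=({}^b\alpha)({}^{bc}\alpha)^{-1}$. Since $\alpha\in G'$ and $cb\equiv bc\pmod{G'}$ (their ratio is the commutator $[c,b]\in G'$), the structural fact gives ${}^{cb}\alpha={}^{bc}\alpha$, whence $[{}^b\alpha,c]={}^b[\alpha,c]$. On the other hand $[\alpha,c]\in G'$ and $^c b\equiv b\pmod{G'}$, so again the structural fact yields $^{^cb}[\alpha,c]={}^b[\alpha,c]$. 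Comparing the two equalities gives $[{}^b\alpha,c]={}^{^cb}[\alpha,c]$, as asserted.

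Finally (4) is a formal consequence of the first three. Applying (1) rewrites $[a,bc]$ as the product $[a,b]\cdot{}^b[a,c]$, both of whose factors lie in $G'$. Applying (2) to this product gives $[[a,bc],d]=[[a,b],d]\cdot[{}^b[a,c],d]$, and applying (3) with the element $[a,c]\in G'$ in place of $\alpha$ rewrites the last commutator as $^{^db}[[a,c],d]$. This is exactly the claimed identity, so no further computation is required.
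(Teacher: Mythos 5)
Your proposal is correct: all four identities are verified, the hypothesis $[G',G']=1$ is invoked exactly where it is needed (in (2) to drop $^{\alpha}[\beta,c]=[\beta,c]$ and reorder, in (3) via the conjugation argument, in (4) to apply (2) to the two $G'$-factors produced by (1)), and (4) is deduced formally from (1)--(3) just as in the paper. The route differs from the paper's in the key step (3): the paper proves it by a direct letter-by-letter expansion, inserting $b^{-1}cb=[b^{-1},c]c$, commuting $\alpha$ past $[b^{-1},c]$ (both in $G'$), and reassembling the word as $^{\,^{c}b}[\alpha,c]$; you instead isolate the structural fact that, since $G'$ is abelian and normal, the conjugation action of $G$ on $G'$ factors through $G/G'$, and then deduce (3) from the congruences $cb\equiv bc$ and $^{c}b\equiv b \pmod{G'}$ after writing $[g,c]=g\,(^{c}g)^{-1}$. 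Your version actually yields the cleaner intermediate identity $[\,^{b}\alpha,c]=\,^{b}[\alpha,c]$, which demystifies the appearance of the conjugator $^{c}b$ in the statement (it acts on $G'$ the same way $b$ does); what the paper's computation buys in exchange is complete self-containedness, with each part a one-line word manipulation requiring no auxiliary lemma. Your proof of (2) via the universal expansion $[\alpha\beta,c]=\,^{\alpha}[\beta,c]\cdot[\alpha,c]$ is also a minor repackaging of the paper's direct calculation, and (1) is identical. Both arguments are elementary and of comparable length, so this is a stylistic rather than substantive divergence, but your abstraction would generalize more readily to other identities of this kind.
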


\begin{proof}
(1) $[a,bc]=abca^{-1}c^{-1}b^{-1}=aba^{-1}[a,c]b^{-1}=[a,b]b[a,c]b^{-1}=[a,b]\cdot\;^b[a,c]$\\
 (2) $[\alpha\beta,c]=\alpha\beta c\beta^{-1}\alpha^{-1}c^{-1}=\alpha[\beta,c]c\alpha^{-1}c^{-1}
 =\alpha c\alpha^{-1}c^{-1}[\beta,c]=[\alpha,c]\cdot[\beta,c]$\\
 (3) $[\;^b\alpha,c]=b\alpha b^{-1}cb\alpha^{-1}b^{-1}c^{-1}=
 b\alpha [b^{-1},c]c\alpha^{-1}b^{-1}c^{-1}=
 b[b^{-1},c]\alpha c\alpha^{-1}b^{-1}c^{-1}=\;
 ^cb[\alpha,c]\;(^cb)^{-1}
 =\;^{^cb}[\alpha,c]$\\
 (4) $[[a,bc],d]=[[a,b]\cdot\;^b[a,c],d]=[[a,b],d]\cdot [\;^b[a,c],d]=[[a,b],d]\cdot\; ^{^db}[[a,c],d]$
\end{proof}

\begin{theorem}\label{th:nilpotent}
 Let $Q$ be a medial quandle and let $m\geq 1$. Then $Q$ is strictly $m$-reductive if and only if $\lmlt{Q}$ is
 nilpotent of degree $m-1$.
\end{theorem}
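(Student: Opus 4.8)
The plan is to translate both sides of the equivalence into a statement about a single endomorphism of the abelian group $\dis Q$ and then compare the two resulting chains. Fix $e\in Q$ and set $t=L_e$. Since $Q$ is medial, $\dis Q$ is commutative, and using \eqref{auto} one checks that $\dis Q$ is normal in $\lmlt Q$ with $\lmlt Q/\dis Q$ cyclic: every generator satisfies $L_aL_e^{-1}\in\dis Q$, so all $L_a$ lie in the single coset $t\dis Q$. Thus $\lmlt Q=\langle \dis Q,t\rangle$ behaves like a semidirect product. Writing $\tau\colon\dis Q\to\dis Q$, $\tau(\alpha)={}^{t}\alpha$, the map $1-\tau$ is then a group endomorphism of the abelian group $\dis Q$ (and, by Lemma~\ref{lem:conjugation}, $\tau$ does not depend on the choice of $e$).

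First I would pin down the lower central series of $\lmlt Q$, the claim being that $\gamma_{k+1}(\lmlt Q)=(1-\tau)^k(\dis Q)$ for every $k\ge 1$, where $\gamma_1=\lmlt Q$ and $\gamma_{i+1}=[\gamma_i,\lmlt Q]$. For the base case $\gamma_2=[\lmlt Q,\lmlt Q]$ lands in $\dis Q$ (the quotient is abelian) and is generated by the elements $[\alpha,t]=\alpha\,\tau(\alpha)^{-1}=(1-\tau)(\alpha)$, $\alpha\in\dis Q$; since $1-\tau$ is a homomorphism its image is already a subgroup, and it is $\tau$-invariant, so $\gamma_2=(1-\tau)(\dis Q)$. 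The inductive step is the same: for $g\in\gamma_k\subseteq\dis Q$ and $h=\beta t^n\in\lmlt Q$ one computes $[g,h]=(1-\tau^n)(g)\in(1-\tau)(\gamma_k)=(1-\tau)^{k}(\dis Q)$, while $n=1$ realizes all of $(1-\tau)(\gamma_k)$. Lemma~\ref{lem:commutators} is exactly what guarantees these commutator manipulations are legitimate in $\lmlt Q$, where $[\lmlt{Q}',\lmlt{Q}']=1$. Consequently $\lmlt Q$ is nilpotent of class $c$ precisely when $(1-\tau)^{c}(\dis Q)=1$ and $(1-\tau)^{c-1}(\dis Q)\ne 1$.

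Next I would translate reductivity into the same language. The action of $\dis Q$ on $Q$ is faithful, being a subgroup of $\mathrm{Sym}(Q)$, so $\bigcap_{e\in Q}\dis{Q}_e=1$. On each orbit module $\orb Qe=\dis{Q}/\dis{Q}_e$ the scalar $t$ acts as $1-\varphi_{e,e}$, hence $\tau$ descends to $1-\varphi_{e,e}$, the map $1-\tau$ descends to $\varphi_{e,e}$, and $(1-\tau)^k$ reduces to $\varphi_{e,e}^{\,k}$ modulo $\dis{Q}_e$. Therefore ``$\varphi_{e,e}^{\,k}=0$ for every $e$'' is equivalent to $(1-\tau)^k(\dis Q)\subseteq\bigcap_e \dis{Q}_e=1$, i.e. to $(1-\tau)^k(\dis Q)=1$. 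By Proposition~\ref{prop:reductive}, $Q$ is $m$-reductive iff $\varphi_{e,e}^{\,m-1}=0$ for all $e$, hence iff $(1-\tau)^{m-1}(\dis Q)=1$, i.e. $\gamma_m(\lmlt Q)=1$. Being \emph{strictly} $m$-reductive means in addition the failure of $(m-1)$-reductivity, i.e. $(1-\tau)^{m-2}(\dis Q)\ne 1$, i.e. $\gamma_{m-1}(\lmlt Q)\ne 1$. Combining the two, $Q$ is strictly $m$-reductive iff $\lmlt Q$ has nilpotency class exactly $m-1$.

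The main obstacle is the careful bookkeeping of the second step: establishing $\gamma_{k+1}(\lmlt Q)=(1-\tau)^k(\dis Q)$ as an \emph{equality} of subgroups rather than a mere inclusion requires checking that each image $(1-\tau)^k(\dis Q)$ is normal in $\lmlt Q$ and that no commutators beyond those with $t$ contribute, which is precisely where commutativity of $\dis Q$ together with Lemma~\ref{lem:commutators} is essential. I would also record the degenerate cases to confirm the index conventions: $m=1$ gives a (nontrivial) projection quandle with $\lmlt Q=1$ of class $0$, and $m=2$ gives $\tau=1$, hence abelian nontrivial $\lmlt Q$ of class $1$.
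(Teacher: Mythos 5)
Your proof is correct, and it takes a genuinely different route from the paper's. The paper stays inside the mesh formalism: it encodes $\varphi_{i,i}^{m-1}=0$ as the vanishing of the iterated commutators $[\ldots[[\alpha,L_{e_i}],L_{e_i}],\ldots,L_{e_i}]$ via the identity $\varphi_{i,j}(\alpha(e_i))=[\alpha,L_{e_i}](e_j)$, uses Lemma~\ref{lem:conjugation} to replace the translations in the outer slots by translations based at arbitrary orbits, and then obtains nilpotency by inductively expanding each argument $\beta_j$ as a word in the generators through the identity of Lemma~\ref{lem:commutators}(4); it never identifies the lower central series itself. You instead exploit the decomposition $\lmlt{Q}=\bigcup_{n\in\Z}\dis{Q}\,t^n$ with $t=L_e$ and $\dis{Q}$ abelian normal, and compute the series in closed form, $\gamma_{k+1}(\lmlt{Q})=(1-\tau)^k(\dis{Q})$; in this metabelian-by-cyclic setting the coset computation $[\alpha t^m,\beta t^n]=(1-\tau^n)(\alpha)-(1-\tau^m)(\beta)$ (written additively) together with the factorization $1-\tau^n=(1+\tau+\cdots+\tau^{n-1})(1-\tau)$ makes Lemma~\ref{lem:commutators} essentially unnecessary, contrary to your own remark crediting it. On the reductivity side, your identification of $\varphi_{e,e}$ with the map induced by $1-\tau$ on $\dis{Q}/\dis{Q}_e$ is exactly the alternative description of the canonical mesh recorded at the end of Section~2, which you may simply cite; combined with faithfulness of the action (note that stabilizers of points in a single $\dis{Q}$-orbit coincide because $\dis{Q}$ is abelian, so intersecting over a transversal already gives $\bigcap_{x\in Q}\dis{Q}_x=1$), this converts ``$\varphi_{e,e}^{m-1}=0$ for all $e$'' into $(1-\tau)^{m-1}(\dis{Q})=1$ at one stroke. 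Your approach buys two things: a stronger statement (the full lower central series, not just the nilpotency class), and a clean bypass of the one delicate step in the paper's argument, namely upgrading ``the iterated commutator fixes $e_i$'' to ``it is the identity permutation,'' which in the paper's setting implicitly leans on indecomposability via facts of the flavour of Lemma~\ref{lem2:m-reductive}; intersecting stabilizers over all base points performs this upgrade for free. One bookkeeping item to make explicit in a final write-up: in the base case, $\gamma_2$ is generated by all commutators $[\alpha t^m,\beta t^n]$, not only by the elements $[\alpha,t]$ with $\alpha\in\dis{Q}$, so you need the displayed two-parameter formula above (all such values still lie in $\im(1-\tau)$, with the analogous factorization for negative exponents) — you flag this in your last paragraph, and the verification is exactly as routine as you predict.
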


\begin{proof}
A quandle is a projection quandle
if and only if its left multiplication group is trivial, so the theorem holds for $m=1$.
\vskip 2mm
Let $m>1$ and let $\mathcal A=(A_i;\,\varphi_{i,j};\,c_{i,j})$,
 for $i,j\in I$, be the canonical affine mesh for~$Q$.
 Choosing an element~$e_i\in A_i$, every element of~$A_i$ can be written as $\alpha(e_i)$,
 for some $\alpha\in\lmlt{Q}$. Moreover,
 according to \cite[Lemma~3.8]{JPSZ}, we have $\varphi_{i,j}(\alpha(e_i))=[\alpha,L_{e_i}](e_j)$, for any $\alpha\in\lmlt{Q}$
 and $i,j\in I$.

 According to Proposition~\ref{prop:reductive}, $Q$ is $m$-reductive if and only if $\varphi_{i,i}^{m-1}=0$, for each~$i\in I$.
 This condition can be equivalently rewritten as $[\ldots[[\alpha,L_{e_i}],L_{e_i}],\ldots,L_{e_i}](e_i)=e_i$, for each~$i\in I$
 and $\alpha\in\lmlt{Q}$,
 which means $[\ldots[[\alpha,L_{e_i}],L_{e_i}],\ldots,L_{e_i}]=1$.

 ``$\Leftarrow$'' If $\lmlt{Q}$ is nilpotent of degree~$m-1$ then
 $[\ldots[\alpha_1,\alpha_2],\ldots,\alpha_m]=1$, for any $\alpha_j\in\lmlt{Q}$,
 in particular for $\alpha_2=\cdots=\alpha_m=L_{e_i}$, for any $i\in I$.

 ``$\Rightarrow$'' According to Lemma~\ref{lem:conjugation}, we have $[[\alpha,L_{e_i}],L_{e_i}]=[[\alpha,L_{e_i}],L_{e_j}]$,
 for any $i,j$.
 Hence $m$-reductivity implies $[\ldots[[\alpha,L_{e_{i_1}}],L_{e_{i_2}}],\ldots,L_{e_{i_{m-1}}}]=1$,
 for any $e_{i_k}$, $1\leq k\leq m-1$. Now we should inductively enlarge this property
 to $[\ldots[[\alpha,\beta_1],\beta_2],\ldots,\beta_{m-1}]$, for any $\beta_1,\ldots,\beta_{m-1}\in\lmlt{Q}$.
 Suppose $\beta_j=\hat\beta_j\bar\beta_j$. According to Lemma~\ref{lem:commutators} (4),
 \begin{multline*}
 [\ldots[\ldots[[\alpha,\beta_1],\beta_2],\ldots,\hat\beta_j\bar\beta_j],\ldots,\beta_{m-1}]=\\
 [\ldots[\ldots[[\alpha,\beta_1],\beta_2],\ldots,\hat\beta_j],\ldots,\beta_{m-1}]
 \cdot
% ^{\begin{picture}(52.1,0.1)
%    \put(0,0){${}^{{}^{{}^{{}^{\beta_{m-1}}\ddots}\beta_{j+1}}\hat\beta_j}$} 
%   \end{picture}}
 \strut^{\gamma_{j}}
 [\ldots[\ldots[[\alpha,\beta_1],\beta_2],\ldots,\bar\beta_j],\ldots,\beta_{m-1}],
 \end{multline*}
 where $\gamma_{m}=1$ and $\gamma_k=\strut^{\gamma_{k+1}}\beta_k$,
 for each $k=j+1,\ldots,m-1$, and $\gamma_j=\strut^{\gamma_{j+1}}\hat\beta_j$.
 The right side of the equation is trivial, due to the induction hypothesis. Hence $\lmlt{Q}$ is a nilpotent group of degree at most~$m-1$.
\end{proof}

By Proposition \ref{prop:reductive} we know that the orbits of a strictly $m$-reductive medial quandle are $(m-1)$-reductive. But not necessarily
strictly $(m-1)$-reductive -- the degree of reductivity may be even smaller.
It was nevertheless proved in~\cite[Theorem 6.6]{JPSZ} that the orbits cannot be $(m-4)$-reductive in such a case.
We improve the result here showing that no $(m-3)$-reductive orbits can appear.

\begin{lm}\label{lem:m-reductive}
Let $m\geq 1$ and $\mathcal A=(A_i;\,\varphi_{i,j};\,c_{i,j})$ be an indecomposable affine mesh over a set $I$. Assume there is $j\in I$ such that $\varphi_{j,j}^m=0$. Then $\varphi_{i,i}^{m+1}=0$ for every $i\in I$.
\end{lm}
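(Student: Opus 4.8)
The plan is to work entirely inside the affine mesh and to reduce the statement to a claim about the submodules $B_i:=\varphi_{i,i}^{m+1}(A_i)$, namely that each $B_i$ is trivial; by Proposition~\ref{prop:reductive}, in its pointwise form $\varphi_{i,i}^{m+1}=0$, this is exactly the conclusion. The first observation is that (M3) lets one reroute any two-step composite through an arbitrary intermediate index, which yields the commutation identities $\varphi_{i,i}\varphi_{k,i}=\varphi_{k,i}\varphi_{k,k}$ (hence $\varphi_{i,i}^{n}\varphi_{k,i}=\varphi_{k,i}\varphi_{k,k}^{n}$) and $\varphi_{k,i}\varphi_{l,k}=\varphi_{i,i}\varphi_{l,i}$, together with the factorization $\varphi_{i,i}^{2}=\varphi_{j,i}\varphi_{i,j}$. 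Combining these, $\varphi_{i,i}^{m+2}=\varphi_{i,i}^{m}\varphi_{j,i}\varphi_{i,j}=\varphi_{j,i}\varphi_{j,j}^{m}\varphi_{i,j}=0$ for every $i$, so the naive routing already establishes $(m+2)$-reductivity; the whole difficulty is to save one further power.

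The extra power comes from the indecomposability of the mesh. Since $A_i$ is generated as an abelian group by the constants $c_{k,i}$ and the images $\varphi_{k,i}(A_k)$, and since $\varphi_{i,i}^{m+1}$ is additive, $B_i$ is generated by the elements $\varphi_{i,i}^{m+1}(c_{k,i})$ and $\varphi_{i,i}^{m+1}\varphi_{k,i}(a)$. First I would show the constant generators vanish. Using (M4) in the forms $\varphi_{j,i}(c_{i,j})=-\varphi_{i,i}(c_{j,i})$ and $\varphi_{i,i}(c_{k,i})=\varphi_{i,i}(c_{j,i})+\varphi_{j,i}(c_{k,j})$, then applying $\varphi_{i,i}^{m}$ and invoking $\varphi_{i,i}^{m}\varphi_{j,i}=\varphi_{j,i}\varphi_{j,j}^{m}=0$, one gets $\varphi_{i,i}^{m+1}(c_{k,i})=0$ for all $k$. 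For the remaining generators the commutation identity gives $\varphi_{i,i}^{m+1}\varphi_{k,i}(a)=\varphi_{k,i}\varphi_{k,k}^{m+1}(a)\in\varphi_{k,i}(B_k)$. Hence the family of submodules satisfies the self-referential relation $B_i=\sum_{k\in I}\varphi_{k,i}(B_k)$.

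Finally I would collapse this system. Substituting $B_k=\sum_{l}\varphi_{l,k}(B_l)$ into $\varphi_{k,i}(B_k)$ and rerouting each double composite through the target index by $\varphi_{k,i}\varphi_{l,k}=\varphi_{i,i}\varphi_{l,i}$ gives $\varphi_{k,i}(B_k)=\varphi_{i,i}(\sum_l\varphi_{l,i}(B_l))=\varphi_{i,i}(B_i)$ for every $k$. Therefore $B_i=\sum_k\varphi_{k,i}(B_k)=\varphi_{i,i}(B_i)=\varphi_{i,i}^{m+2}(A_i)=0$, which is the assertion.

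I expect the main obstacle to be pinpointing where the improvement from $m+2$ to $m+1$ is actually forced: the naive factorization is off by exactly one, and one must combine indecomposability (to rewrite $B_i$ through the other $B_k$ and to discard the constants via (M4)) with the single additional rerouting that pulls one copy of $\varphi_{i,i}$ out of the sum. Getting the bookkeeping of the constant terms right---in particular that every $c_{k,i}$ contributes the same, already-trivial value under $\varphi_{i,i}^{m+1}$---is the step most likely to conceal a subtlety.
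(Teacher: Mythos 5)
Your proof is correct, and its first half coincides with the paper's: both reduce the claim, via indecomposability, to checking the generators $c_{k,i}$ and $\varphi_{k,i}(a)$ of $A_i$, and both kill the constants by combining (M4) with (M2) and the identity $\varphi_{i,i}^{m}\varphi_{j,i}=\varphi_{j,i}\varphi_{j,j}^{m}=0$ (the paper derives $\varphi_{k,k}^{m+1}(c_{i,k})=\varphi_{k,k}^{m+1}(c_{j,k})$ for all $i$ and sets $i=k$; you instead use the instance $\varphi_{j,i}(c_{i,j})=-\varphi_{i,i}(c_{j,i})$ of (M4) directly --- the same computation, reorganized). Where you genuinely diverge is the image generators. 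You reroute $\varphi_{i,i}^{m+1}\varphi_{k,i}$ through $k$ itself, which only yields membership in $\varphi_{k,i}(B_k)$ with $B_k=\varphi_{k,k}^{m+1}(A_k)$, and this forces the extra machinery: the system $B_i=\sum_{k}\varphi_{k,i}(B_k)$, the preliminary $(m+2)$-reductivity bound, and the collapse by a second substitution extracting one factor of $\varphi_{i,i}$, giving $B_i=\varphi_{i,i}(B_i)=\varphi_{i,i}^{m+2}(A_i)=\{0\}$. All of these steps check out (in particular both inclusions behind $B_i=\sum_k\varphi_{k,i}(B_k)$ hold, since $\varphi_{k,i}\varphi_{k,k}^{m+1}=\varphi_{i,i}^{m+1}\varphi_{k,i}$, and a homomorphism does commute with sums of subgroups), but the detour is avoidable: rerouting through the distinguished index $j$ instead, i.e. $\varphi_{i,i}\varphi_{k,i}=\varphi_{j,i}\varphi_{k,j}$ by (M3), gives at once $\varphi_{i,i}^{m+1}\varphi_{k,i}=\varphi_{i,i}^{m}\varphi_{j,i}\varphi_{k,j}=\varphi_{j,i}\varphi_{j,j}^{m}\varphi_{k,j}=0$, which is exactly the paper's one-line treatment and renders both the auxiliary $(m+2)$ bound and the fixed-point argument for the $B_i$ unnecessary. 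What your route buys is a reusable observation --- the images $\varphi_{i,i}^{m+1}(A_i)$ form a family of submodules permuted into one another by the mesh maps, in the spirit of Lemma~\ref{lmex:clas2} --- at the cost of roughly doubling the length; the paper's route buys brevity by exploiting the special index $j$ at the one place where it matters. One cosmetic remark: you do not need Proposition~\ref{prop:reductive} to translate $B_i=\{0\}$ into the conclusion, since $B_i=\varphi_{i,i}^{m+1}(A_i)$ is trivial if and only if $\varphi_{i,i}^{m+1}=0$ by definition.
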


\begin{proof}
First note that applying (M3) $m$-times, for any $k\in I$, we have
\begin{equation}\label{eq:lem:m-reducive}
\varphi_{k,k}^m\varphi_{j,k}=\varphi_{j,k}\varphi_{j,j}^m=0,
\end{equation}
because $\varphi_{j,j}^m=0$ by assumption.

The indecomposability condition says that
the group $A_k$ is generated by all the elements $c_{i,k}$, $\varphi_{i,k}(a)$ with $i\in I$ and $a\in A_i$.
So it is sufficient to verify that
$\varphi_{k,k}^{m+1}\varphi_{i,k}=0$ and
$\varphi_{k,k}^{m+1}(c_{i,k})=0$, for every $i\in I$.

By \eqref{eq:lem:m-reducive} and (M4) we have
\begin{equation*}
\varphi_{k,k}^{m+1}(c_{i,k}-c_{j,k})=\varphi_{k,k}^m\varphi_{j,k}(c_{i,j})=0.
\end{equation*}
This implies
\begin{equation*}
\varphi_{k,k}^{m+1}(c_{i,k})=\varphi_{k,k}^{m+1}(c_{j,k}),
\end{equation*}
for all $i,k\in I$. In particular, for $i=k$, we see that
$\varphi_{k,k}^{m+1}(c_{k,k})=0$, and thus
$\varphi_{k,k}^{m+1}(c_{j,k})=0$. This gives
$\varphi_{k,k}^{m+1}(c_{i,k})=0$ for every $i\in I$.

Further, by (M3) applying $(m+1)$-times,
\begin{equation*}
\varphi_{k,k}^{m+1}\varphi_{i,k}=\varphi_{j,k}\varphi_{j,j}^{m}\varphi_{i,j}=0,
\end{equation*}
for every $i\in I$. Hence, $\varphi_{k,k}^{m+1}=0$, for every $k\in I$.
\end{proof}

\begin{corollary}\label{cor1:reductive}
Let $Q$ be a medial quandle.
If one orbit of $Q$ is $m$-reductive, then $Q$ is $(m+2)$-reductive.
\end{corollary}

We recall now a few results about 2-reductive medial quandles.

\begin{lm}\cite[Lemma 6.8]{JPSZ}\label{lem:2-reductive}
Let $\mathcal A=(A_i;\,\varphi_{i,j};\,c_{i,j})$ be an indecomposable affine mesh over a set $I$. Assume there are $j,k\in I$ such that $\varphi_{j,k}=0$. Then $\varphi_{i,k}=0$ for every $i\in I$.
\end{lm}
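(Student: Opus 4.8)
The plan is to fix the given indices $j,k$ with $\varphi_{j,k}=0$ and, for an arbitrary $i\in I$, to show that the homomorphism $\varphi_{i,k}\colon A_i\to A_k$ annihilates a generating set of $A_i$. By the indecomposability of the mesh, $A_i$ is generated by the constants $c_{p,i}$ together with the images $\varphi_{p,i}(a)$ for $p\in I$ and $a\in A_p$, so it suffices to check that $\varphi_{i,k}$ kills both families of generators; since $\varphi_{i,k}$ is a group homomorphism, this will force $\varphi_{i,k}=0$.

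The images are the easy part. Comparing the two routes $A_p\to A_i\to A_k$ and $A_p\to A_j\to A_k$, condition (M3) yields $\varphi_{i,k}\varphi_{p,i}=\varphi_{j,k}\varphi_{p,j}$, and the right-hand side is $0$ because $\varphi_{j,k}=0$. Hence $\varphi_{i,k}(\varphi_{p,i}(a))=0$ for every $p\in I$ and $a\in A_p$.

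For the constants I would first extract an auxiliary fact from (M4). Writing (M4) with source $i$, intermediate $j$, target $k$ gives $\varphi_{j,k}(c_{i,j})=\varphi_{k,k}(c_{i,k}-c_{j,k})$; the left-hand side vanishes, so $\varphi_{k,k}(c_{i,k})=\varphi_{k,k}(c_{j,k})$ for every $i$. Specializing to $i=k$ and using (M2), i.e.\ $c_{k,k}=0$, forces $\varphi_{k,k}(c_{j,k})=0$, whence $\varphi_{k,k}(c_{p,k})=0$ for all $p\in I$. Applying (M4) once more, now with source $p$, intermediate $i$, target $k$, gives $\varphi_{i,k}(c_{p,i})=\varphi_{k,k}(c_{p,k}-c_{i,k})=0$ by the fact just established.

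Since $\varphi_{i,k}$ vanishes on every generator of $A_i$ and is a homomorphism, it vanishes identically, so $\varphi_{i,k}=0$; as $i$ was arbitrary, this proves the lemma. The only subtlety — and the step that reorients the whole argument — is to apply the indecomposability condition to the \emph{source} module $A_i$ rather than to the target $A_k$: doing so lets (M3) and (M4) transport each generator along the zero map $\varphi_{j,k}$. Working inside $A_k$ instead would only yield $\varphi_{k,k}=0$ and stall, since $\varphi_{i,k}\varphi_{i,i}=0$ becomes automatic once $\varphi_{k,k}=0$ and carries no further information.
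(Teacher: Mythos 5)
Your proof is correct and takes essentially the approach of the source: the paper states this lemma without proof, citing \cite[Lemma 6.8]{JPSZ}, and both the original proof there and the paper's own proofs of the analogous Lemmas~\ref{lem:m-reductive} and~\ref{lem2:m-reductive} use exactly your scheme — apply indecomposability to the source $A_i$, kill the generators $\varphi_{p,i}(a)$ via (M3), and kill the constants $c_{p,i}$ via (M4) after first deducing $\varphi_{k,k}(c_{p,k})=0$ from (M4) and (M2). Your closing remark, that the generation condition must be applied to the source of the map being annihilated rather than to the target $A_k$, correctly identifies the one non-mechanical step.
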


\begin{theorem}\cite[Theorem 6.9]{JPSZ}\label{thm:2-reductive}
Let $Q$ be a medial quandle and assume it is the sum of an indecomposable affine mesh $(A_i;\,\varphi_{i,j};\,c_{i,j})$ over a set $I$. Then the following statements are equivalent.
\begin{enumerate}
    \item $Q$ is 2-reductive.
    \item For every $j\in I$, there is $i\in I$ such that $\varphi_{i,j}=0$.
    \item $\varphi_{i,j}=0$ for every $i,j\in I$.
\end{enumerate}
\end{theorem}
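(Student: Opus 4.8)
The plan is to prove the three conditions equivalent by establishing the cycle $(1)\Rightarrow(3)\Rightarrow(2)\Rightarrow(1)$, relying entirely on the two facts already at hand: Proposition~\ref{prop:reductive}, which for $m=2$ says that the sum of $\mathcal A$ is $2$-reductive if and only if $\varphi_{i,i}=0$ for every $i\in I$; and Lemma~\ref{lem:2-reductive}, which propagates a single vanishing homomorphism across a whole column, namely that $\varphi_{j,k}=0$ for one pair forces $\varphi_{i,k}=0$ for all $i\in I$.

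For $(1)\Rightarrow(3)$ I would first invoke Proposition~\ref{prop:reductive} with $m=2$ to rewrite $2$-reductivity as the diagonal condition $\varphi_{i,i}=0$ for all $i\in I$. Then, fixing an arbitrary $j\in I$, the vanishing $\varphi_{j,j}=0$ is precisely the hypothesis of Lemma~\ref{lem:2-reductive} applied to the pair $(j,j)$, whose conclusion gives $\varphi_{i,j}=0$ for every $i\in I$, that is, the entire $j$-th column is zero. Since $j$ was arbitrary, this yields $\varphi_{i,j}=0$ for all $i,j\in I$, which is exactly~(3). The implication $(3)\Rightarrow(2)$ is then immediate: if every $\varphi_{i,j}$ vanishes, then for each $j$ any choice of $i$ (for instance $i=j$) satisfies $\varphi_{i,j}=0$.

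Finally, for $(2)\Rightarrow(1)$ I would use Lemma~\ref{lem:2-reductive} once more. Condition~(2) supplies, for each $j\in I$, some $i$ with $\varphi_{i,j}=0$, and the lemma upgrades this to $\varphi_{l,j}=0$ for all $l\in I$; in particular the diagonal entry $\varphi_{j,j}=0$. As this holds for every $j$, Proposition~\ref{prop:reductive} with $m=2$ concludes that $Q$ is $2$-reductive, closing the cycle.

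The argument carries no computation of its own, so there is no genuinely hard step; the one thing worth noticing—and the conceptual hinge of the whole statement—is that the diagonal condition $\varphi_{j,j}=0$ handed over by reductivity is itself an instance of the column-propagation hypothesis of Lemma~\ref{lem:2-reductive}, so that $2$-reductivity and the zero-mesh condition feed into one another through that single lemma. The only real risk is bookkeeping: one must keep the roles of the source and target indices straight, since Lemma~\ref{lem:2-reductive} fixes the target index (the column~$k$) and lets the source index range, and it is essential that it be applied to the pair $(j,j)$ to reach the diagonal.
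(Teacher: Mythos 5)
Your proof is correct and is essentially the intended argument: this paper states the theorem without proof, citing \cite[Theorem 6.9]{JPSZ}, and the natural derivation is exactly your cycle $(1)\Rightarrow(3)\Rightarrow(2)\Rightarrow(1)$ assembling Proposition~\ref{prop:reductive} with $m=2$ and Lemma~\ref{lem:2-reductive}. Your two key observations---that the lemma may be applied with source equal to target to turn one diagonal zero into a whole zero column, and conversely that any single zero in a column forces the diagonal entry to vanish---are precisely what makes the equivalence work, and your bookkeeping of source versus target indices is sound.
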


In particular, medial quandles with a one-element orbit are always
2-reductive and with a two-element orbit are 3-reductive.

We know by now that a strictly $m$-reductive medial quandle has $(m-1)$-reductive orbits
and may have $(m-2)$-reductive orbits too. Some of the orbits might even be isomorphic.
But none of the mappings~$\varphi_{i,j}$ is a bijection.
\vskip 3mm

\begin{prop}\label{lm:bijection}
Let $Q$ be a reductive medial quandle which is the sum of an
indecomposable affine mesh $(A_i;\,\varphi_{i,j};\,c_{i,j})$ over a
set $I$. If, for $i,k\in I$, $\left|A_i\right|>1$ or $\left|A_k\right|>1$, then the homomorphism $\varphi_{i,k}$
is not a bijection.
\end{prop}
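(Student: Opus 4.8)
The plan is to argue by contradiction, assuming that $\varphi_{i,k}$ is a bijection, and to exploit that reductivity forces every diagonal map to be nilpotent. By Proposition~\ref{prop:reductive}, reductivity of $Q$ gives $\varphi_{j,j}^{m-1}=0$ for all $j$ and some $m$; hence on a nontrivial group $A_j$ the endomorphism $\varphi_{j,j}$ is not surjective, so $\im(\varphi_{j,j})\subsetneq A_j$ (were it onto, $\varphi_{j,j}^{m-1}$ would be onto while equal to $0$, forcing $A_j=\{0\}$). The case $i=k$ is then immediate, since a nilpotent endomorphism of a group with more than one element cannot be bijective. So I may assume $i\neq k$; as a bijection forces $|A_i|=|A_k|$, the hypothesis that one of $A_i,A_k$ is nontrivial now yields that \emph{both} are nontrivial.

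The heart of the argument is to use the bijection $\varphi_{i,k}$ to transport, from the column indexed by $k$ to the column indexed by $i$, the proper-image phenomenon produced by nilpotency. First, the instance of (M3) with source $i$, target $k$ and intermediate indices $i,k$ reads $\varphi_{i,k}\varphi_{i,i}=\varphi_{k,k}\varphi_{i,k}$; together with surjectivity of $\varphi_{i,k}$ this gives $\varphi_{i,k}(\im(\varphi_{i,i}))=\im(\varphi_{k,k})$. Next, the instance of (M3) with source $l$, target $k$ and intermediates $i,k$ gives $\varphi_{i,k}\varphi_{l,i}=\varphi_{k,k}\varphi_{l,k}$, so $\varphi_{i,k}(\im(\varphi_{l,i}))\subseteq\im(\varphi_{k,k})=\varphi_{i,k}(\im(\varphi_{i,i}))$; cancelling the injective $\varphi_{i,k}$ on the left yields $\im(\varphi_{l,i})\subseteq\im(\varphi_{i,i})$ for every $l\in I$. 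Finally, the instance of (M4) obtained by substituting the triple $(l,i,k)$ for $(i,j,k)$ reads $\varphi_{i,k}(c_{l,i})=\varphi_{k,k}(c_{l,k}-c_{i,k})\in\im(\varphi_{k,k})=\varphi_{i,k}(\im(\varphi_{i,i}))$, and injectivity once more gives $c_{l,i}\in\im(\varphi_{i,i})$ for every $l\in I$.

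With these three facts in hand, the indecomposability condition at the index $i$, namely $A_i=\langle\bigcup_{l\in I}(c_{l,i}+\im(\varphi_{l,i}))\rangle$, forces $A_i\subseteq\im(\varphi_{i,i})$, because every generator $c_{l,i}$ and every subgroup $\im(\varphi_{l,i})$ already lies in the submodule $\im(\varphi_{i,i})$. Hence $\varphi_{i,i}$ is surjective, contradicting its nilpotency on the nontrivial group $A_i$. I expect the only genuine idea to be the recognition that the bijection $\varphi_{i,k}$ may be cancelled on the left in the relevant (M3) and (M4) identities, thereby converting statements about $\im(\varphi_{k,k})$ into statements about $\im(\varphi_{i,i})$ and funnelling \emph{all} generators of $A_i$ — both the constants $c_{l,i}$ and the images $\im(\varphi_{l,i})$ — into the proper submodule $\im(\varphi_{i,i})$; everything else is bookkeeping with the mesh axioms.
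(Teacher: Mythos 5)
Your proof is correct. Every step checks out: the instance $\varphi_{i,k}\varphi_{l,i}=\varphi_{k,k}\varphi_{l,k}$ of (M3) and the instance $\varphi_{i,k}(c_{l,i})=\varphi_{k,k}(c_{l,k}-c_{i,k})$ of (M4) are legitimate substitutions, the set-level cancellation of the injective $\varphi_{i,k}$ is valid, and $\im(\varphi_{i,i})$ is a subgroup, so indecomposability at $i$ indeed forces $A_i=\im(\varphi_{i,i})$, contradicting the non-surjectivity that nilpotency imposes on a nontrivial $A_i$. Your route shares the paper's central idea — cancelling the bijection $\varphi_{i,k}$ in (M3) and (M4) to pull information from column $k$ back to column $i$; in fact your (M4) step, yielding $c_{l,i}\in\im(\varphi_{i,i})$, is exactly the paper's. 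But the finish differs in a genuine way: the paper only concludes from the constants that $A_i=\langle\im(\varphi_{j,i})\mid j\in I\rangle$, then takes the \emph{minimal} $m$ with $\varphi_{k,k}^m=0$ and derives $\varphi_{k,k}^{m-1}\varphi_{i,k}\varphi_{j,i}=0$ for all $j$, contradicting minimality (with a separate appeal to Lemma~\ref{lem:2-reductive} when $m=1$), whereas you additionally extract $\im(\varphi_{l,i})\subseteq\im(\varphi_{i,i})$ from (M3) by the same cancellation, funnel \emph{all} generators into $\im(\varphi_{i,i})$, and contradict nilpotency of $\varphi_{i,i}$ outright — no minimality argument, no case split on $m$, and no use of Lemma~\ref{lem:2-reductive}. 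Your disposal of the trivial-orbit case (a bijection forces $|A_i|=|A_k|$, so both orbits are nontrivial) is also more elementary than the paper's reduction via $2$-reductivity of quandles with a one-element orbit. The trade-off is negligible; if anything your version is slightly cleaner, at the cost of one extra (M3) manipulation. One cosmetic note: calling $\im(\varphi_{i,i})$ a submodule is unnecessary (and would need a word of justification for the $t^{-1}$-action); the subgroup property is all your generation argument uses.
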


\begin{proof}
Since medial quandles with a
one-element orbit are always 2-reductive and by Theorem
\ref{thm:2-reductive}, in 2-reductive medial quandles
$\varphi_{l,j}=0$, for every $l,j\in I$, we may assume that
$|A_i|>1$.

Let us suppose that there are $i, k\in I$ such that $\varphi_{i,k}\colon
A_i\to A_k$ is a bijection.
By (M3)
$\varphi_{k,k}\varphi_{i,k}=\varphi_{i,k}\varphi_{i,i}$.
It implies
\begin{equation}\label{eq:lm:bijection}
\varphi_{i,k}^{-1}\varphi_{k,k}=\varphi_{i,i}\varphi_{i,k}^{-1}.
\end{equation}
On the other hand, by (M4) we have
\begin{equation*}
c_{j,i}=\varphi_{i,k}^{-1}\varphi_{k,k}(c_{j,k}-c_{i,k})=\varphi_{i,i}\varphi_{i,k}^{-1}(c_{j,k}-c_{i,k})\in \im(\varphi_{i,i}),
\end{equation*}
for any $j\in I$.
By indecomposability, it means that $A_i=\langle \im(\varphi_{j,i})\mid j\in I\rangle$.

Since $Q$ is a reductive medial quandle, there is a natural number
$m>0$ such that $\varphi_{k,k}^m=0$.
Let $m$ be the least such a number.
If $m=1$, then by Lemma
\ref{lem:2-reductive}, $\varphi_{j,k}=0$, for every $j\in I$. In
particular, $\varphi_{i,k}=0$.

For $m>1$, once again by (M3) one has
\begin{equation}\label{eq2:lm:bijection}
\varphi_{k,k}^{m-1}\varphi_{i,k}\varphi_{j,i}=\varphi_{k,k}^{m-1}\varphi_{k,k}\varphi_{j,k}=\varphi_{k,k}^m\varphi_{j,k}=0,
\end{equation}
for any $j\in I$. Since $\varphi_{i,k}$ is a bijection
and
$A_i=\langle \im(\varphi_{j,i})\mid j\in I\rangle$,
Condition \eqref{eq2:lm:bijection} implies that $\varphi_{k,k}^{m-1}=0$,
a contradiction with the minimality of~$m$.
Hence there are no $i,k\in I$ such that
$\varphi_{i,k}$ is a bijection.
\end{proof}

At the end of this section we shall prove that reductive medial quandles have a property
which is crucial for our considerations and
enables us to find small congruences that are good candidates for monoliths.

\begin{lm}\label{lem2:m-reductive}
Let $m\geq 2$ and $Q$ be an indecomposable affine mesh $\mathcal
A=(A_i;\,\varphi_{i,j};\,c_{i,j})$ over a set $I$ and assume that
for some $i,j\in I$, $i\neq j$,
$\varphi_{i,i}^{m-1}=\varphi_{j,j}^{m-1}=0$.
Then $\varphi_{i,j}\varphi_{i,i}^{m-2}=\varphi_{j,j}^{m-2}\varphi_{i,j}=0$.
\end{lm}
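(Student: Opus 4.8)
The plan is to reduce the two asserted identities to a single one and then to annihilate the resulting homomorphism on a generating set supplied by indecomposability, in the same spirit as the proof of Lemma~\ref{lem:m-reductive}. First I would record the intertwining relation coming from (M3): comparing the two paths $A_i\to A_i\to A_j$ and $A_i\to A_j\to A_j$ gives $\varphi_{i,j}\varphi_{i,i}=\varphi_{j,j}\varphi_{i,j}$. Iterating this $m-2$ times (here $m\geq 2$ is what is needed) yields
\[
\varphi_{i,j}\varphi_{i,i}^{m-2}=\varphi_{j,j}^{m-2}\varphi_{i,j},
\]
so the two products in the statement coincide; denote this common homomorphism by $\chi\colon A_i\to A_j$. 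This reduction uses no reductivity hypothesis at all, which is exactly why the asserted identity is symmetric in the two displayed products.

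Since $\chi$ is a group homomorphism, it suffices to check that it vanishes on a generating set of $A_i$. By the indecomposability of the mesh, $A_i$ is generated by the constants $c_{l,i}$ together with the elements $\varphi_{l,i}(a)$ for $l\in I$, $a\in A_l$, so I would dispose of these two types of generators separately, choosing for each the convenient factorization of $\chi$.

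For the generators $\varphi_{l,i}(a)$ I would use the form $\chi=\varphi_{i,j}\varphi_{i,i}^{m-2}$ and push everything down to the source index $l$ by repeated (M3): from $\varphi_{i,i}\varphi_{l,i}=\varphi_{l,i}\varphi_{l,l}$ and $\varphi_{i,j}\varphi_{l,i}=\varphi_{l,j}\varphi_{l,l}$ one gets $\chi\,\varphi_{l,i}=\varphi_{l,j}\varphi_{l,l}^{m-1}$, and then pushing the powers up to the target index via $\varphi_{l,j}\varphi_{l,l}=\varphi_{j,j}\varphi_{l,j}$ turns this into $\varphi_{j,j}^{m-1}\varphi_{l,j}=0$, using $\varphi_{j,j}^{m-1}=0$. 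For the constant generators I would switch to the form $\chi=\varphi_{j,j}^{m-2}\varphi_{i,j}$ and apply (M4) with indices $(l,i,j)$, namely $\varphi_{i,j}(c_{l,i})=\varphi_{j,j}(c_{l,j}-c_{i,j})$; this immediately gives $\chi(c_{l,i})=\varphi_{j,j}^{m-1}(c_{l,j}-c_{i,j})=0$, again by $\varphi_{j,j}^{m-1}=0$. Having killed $\chi$ on every generator of $A_i$, I conclude $\chi=0$, i.e. both $\varphi_{i,j}\varphi_{i,i}^{m-2}=0$ and $\varphi_{j,j}^{m-2}\varphi_{i,j}=0$.

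The only genuinely delicate point is matching each type of generator with the right factorization of $\chi$: the multiplicative generators want $\varphi_{i,i}^{m-2}$ written on the right so that (M3) can absorb the whole composite into a single power $\varphi_{j,j}^{m-1}$, whereas the constants want $\varphi_{i,j}$ on the right so that (M4) contributes the extra factor of $\varphi_{j,j}$ that raises the exponent from $m-2$ to $m-1$. Everything else is routine diagram chasing with (M3). It is worth remarking that the argument in fact uses only the hypothesis $\varphi_{j,j}^{m-1}=0$, and that for $m=2$ it specializes to Lemma~\ref{lem:2-reductive} with coinciding indices.
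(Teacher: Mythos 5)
Your proof is correct and follows essentially the same route as the paper's: the (M3) intertwining relation $\varphi_{i,j}\varphi_{i,i}=\varphi_{j,j}\varphi_{i,j}$ identifies the two composites, and then indecomposability reduces the claim to killing the generators $\varphi_{l,i}(a)$ via (M3) and the constants $c_{l,i}$ via (M4), each time landing on $\varphi_{j,j}^{m-1}=0$. The only (harmless) differences are that you treat $m=2$ uniformly where the paper dispatches it by citing Lemma~\ref{lem:2-reductive}, and you correctly observe that only the hypothesis $\varphi_{j,j}^{m-1}=0$ is actually used.
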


\begin{proof}
For $m=2$ the conclusion follows by Lemma \ref{lem:2-reductive}. Let $m>2$.
Clearly, by (M3),
\begin{equation}\label{eq1:lem:m-reducive}
\varphi_{i,j}\varphi_{i,i}^{m-2}=\varphi_{i,j}\varphi_{i,i}^{m-3}\varphi_{i,i}=
\varphi_{j,j}\varphi_{j,j}^{m-3}\varphi_{i,j}=\varphi_{j,j}^{m-2}\varphi_{i,j}.
\end{equation}

By indecomposability, it is sufficient to verify that
$\varphi_{i,j}\varphi_{i,i}^{m-2}\varphi_{k,i}=0$ and
$\varphi_{i,j}\varphi_{i,i}^{m-2}(c_{k,i})=0$, for every $k\in I$.

For each $k\in I$,  by (M3) we have
\begin{equation*}
\varphi_{i,j}\varphi_{i,i}^{m-2}\varphi_{k,i}=\varphi_{i,j}\varphi_{i,i}\varphi_{i,i}^{m-3}\varphi_{k,i}=
\varphi_{j,j}\varphi_{j,j}\varphi_{j,j}^{m-3}\varphi_{k,j}=\varphi_{j,j}^{m-1}\varphi_{k,j}=0
\end{equation*}
and by \eqref{eq1:lem:m-reducive} and (M4)
\begin{equation*}
\varphi_{i,j}\varphi_{i,i}^{m-2}(c_{k,i})=\varphi_{j,j}^{m-2}\varphi_{i,j}(c_{k,i})=\varphi_{j,j}^{m-2}\varphi_{j,j}(c_{k,j}-c_{i,j})=
\varphi_{j,j}^{m-1}(c_{k,j}-c_{i,j})=0.\qedhere
\end{equation*}
\end{proof}

\begin{proposition}\label{prop:theta}
Let~$Q$ be a non-projection reductive quandle that is the sum of an affine mesh $(A_i; \varphi_{i,j}; c_{i,j})$ over a set~$I$.
Then there exists $i\in I$ such that $\bigcap_{j\in I} \Ker (\varphi_{i,j})\neq\{0\}$.
\end{proposition}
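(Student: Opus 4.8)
The plan is to read off the required nonzero kernel directly from the reductivity degree of $Q$, using Lemma~\ref{lem2:m-reductive} as the key input. Throughout I assume, as usual, that the mesh is indecomposable (Theorem~\ref{thm:decomposition}), so that the cited structural results apply. First I would let $m$ be the strict reductivity degree of $Q$; since a non-projection quandle is not $1$-reductive, we have $m\geq 2$. By Proposition~\ref{prop:reductive}, $m$-reductivity gives $\varphi_{k,k}^{m-1}=0$ for every $k\in I$, whereas the failure of $(m-1)$-reductivity means there is an index $i\in I$ with $\varphi_{i,i}^{m-2}\neq 0$. This $i$ is the index whose existence the proposition asserts, and $\im(\varphi_{i,i}^{m-2})$ will be the witnessing nonzero submodule.

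The core of the argument is the inclusion
\[
\im(\varphi_{i,i}^{m-2})\subseteq\bigcap_{j\in I}\Ker(\varphi_{i,j}),
\]
which I would establish by showing $\varphi_{i,j}\,\varphi_{i,i}^{m-2}=0$ for every $j\in I$. For $j=i$ this is simply $\varphi_{i,i}^{m-1}=0$. For $j\neq i$ the hypotheses $\varphi_{i,i}^{m-1}=\varphi_{j,j}^{m-1}=0$ of Lemma~\ref{lem2:m-reductive} hold by $m$-reductivity, and its conclusion gives exactly $\varphi_{i,j}\,\varphi_{i,i}^{m-2}=0$. Hence every element of $\im(\varphi_{i,i}^{m-2})$ lies in each $\Ker(\varphi_{i,j})$, which proves the inclusion.

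It remains only to note that $\varphi_{i,i}^{m-2}\neq 0$ makes $\im(\varphi_{i,i}^{m-2})$ a nonzero submodule of $A_i$, so the displayed inclusion forces $\bigcap_{j\in I}\Ker(\varphi_{i,j})\neq\{0\}$. There is essentially no obstacle here: the whole proposition becomes a one-line consequence once one recognizes that Lemma~\ref{lem2:m-reductive} already packages the vanishing $\varphi_{i,j}\,\varphi_{i,i}^{m-2}=0$ we need. The only point requiring a moment's care is the degenerate case $m=2$, where $\varphi_{i,i}^{m-2}$ is the identity on $A_i$ and the inclusion reads $A_i\subseteq\bigcap_{j\in I}\Ker(\varphi_{i,j})$; here non-projectivity guarantees an orbit with $|A_i|>1$, and the statement reduces to the fact (Theorem~\ref{thm:2-reductive}) that all $\varphi_{i,j}$ vanish in the $2$-reductive case.
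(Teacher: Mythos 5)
Your proof is correct and takes essentially the same route as the paper's: both use Proposition~\ref{prop:reductive} to find an orbit with $\varphi_{i,i}^{m-2}\neq 0$ and then apply Lemma~\ref{lem2:m-reductive} to place $\im(\varphi_{i,i}^{m-2})$ inside $\bigcap_{j\in I}\Ker(\varphi_{i,j})$. The only cosmetic differences are that the paper exhibits a single witness element $\varphi_{1,1}^{m-2}(a_1)\neq 0$ rather than the image submodule, and handles $m=2$ separately via Theorem~\ref{thm:2-reductive}, exactly as you do in your closing remark.
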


\begin{proof}
Assume that $Q$ is strictly $m$-reductive, for some $m\geq 2$.
If $m=2$ then by Theorem \ref{thm:2-reductive}, $\varphi_{i,j}=0$, for all~$i,j\in I$.

Suppose now $m> 2$. Since $Q$ is strictly $m$-reductive then by
Proposition \ref{prop:reductive},
there is at least one orbit of $Q$, say $A_1$, which is strictly
$(m-1)$-reductive.

Hence, there is an element $0\neq a_1\in A_1$
such that the elements:
$0,a_1,\varphi_{1,1}(a_1),\ldots,\varphi_{1,1}^{m-2}(a_1)\in A_1$
are pairwise different.
By Lemma \ref{lem2:m-reductive},
$0\neq\varphi_{1,1}^{m-2}(a_1)\in\bigcap_{j\in I}\mathrm{Ker}(\varphi_{1,j})$.
\end{proof}

As we have seen in the proof, for a quandle $Q$ which is strictly $m$-reductive, for some $m\geq 2$, every strictly $(m-1)$-reductive orbit has the property described in Proposition \ref{prop:theta}. Hence
if $\bigcap_{j\in I} \Ker (\varphi_{i,j})=\{0\}$, for some $i\in I$, then the orbit
has to be $(m-2)$-reductive.

\section{Medial quandles of set type}\label{sec:set-type}

We have already shown in Section~4 that all SI medial quandles of quasi-affine type are latin and finite non-connected SI medial quandles are reductive.
But this dichotomy between \emph{latin} and \emph{reductive} holds only in the finite case, there are infinite SI medial quandles that are
neither latin nor reductive.

\begin{example}\label{exm:Prugr}
Take $Q$ the Alexander quandle $(\Z_{p^\infty},1-p)$ where $p$ is a prime.
The multiplication by~$p$ is surjective on $\Z_{p^\infty}$ and therefore~$Q$ is connected. It follows by Remark~\ref{prop:non-connected} that $Q$ is non-reductive.
Since the multiplication by~$p$ is not injective, $Q$ is not latin.
The kernel is $\{\left[ \frac a{p}\right]_\sim\mid a\in\Z_p\}$, i.e. the socle of $\Z_{p^\infty}$ which
is the minimal $\Z$-submodule of~$\Z_{p^\infty}$ and corresponds to the monolith of
the subdirectly irreducible quandle~$Q$.
\end{example}

The above example is clearly of set type -- the socle is isomorphic to the Alexander quandle $(\Z_p,1)$.
It is hence useful to find a more general condition that is satisfied by all reductive medial quandles
as well as by all SI medial quandles of set type;
 this turns out to be the property described in Proposition \ref{prop:theta}.

\begin{de}
 Let~$Q$ be a medial quandle being the sum of an indecomposable affine mesh $(A_i;\,\varphi_{i,j};\,c_{i,j})$ over a set $I$.
 The quandle is called {\em quasi-reductive} if there exists $i\in I$ such that $\bigcap_{j\in I}\Ker(\varphi_{i,j})$ is non-trivial.
\end{de}

An alternative definition, not using the notion of an affine mesh and hence applicable to all quandles, is the following:

 \begin{de}
 Let~$Q$ be a quandle. We say that $Q$ is {\em quasi-reductive} if
 there exist $a,b\in Q$, different elements lying in the same orbit, such that $L_a=L_b$.
\end{de}

The equivalence of the definitions is clear: on one hand, if $a\in\bigcap_{j\in I}\Ker(\varphi_{i,j})$
then $a*c=\varphi_{i,j}(a)+\varphi_{j,j}(c)+c_{i,j}=0+\varphi_{j,j}(c)+c_{i,j}=0*c$, for all $c\in Q$.
On the other hand we analogously obtain $\varphi_{i,j}(a-b)=0$, for all $j\in I$.

Proposition \ref{prop:theta} tells us that every non-projection reductive medial quandle
is quasi-reductive. The converse implication is not true, in Example~\ref{exm:Prugr} we see
medial quandles that are quasi-reductive but not reductive. There even exist finite quasi-reductive
medial quandles that are not reductive, for instance the Alexander quandles
$\left( \Z_{p}^3,\  \text{\strut\tiny$\begin{pmatrix}2 & 0 & 0\\ 0 & 1 & 1 \\ 0 & 0 & 1\end{pmatrix}$}\right)$ where $p>2$ is a prime.

\begin{mthm}\label{all_si}
 Every subdirectly irreducible medial quandle with more than two elements is either latin or quasi-reductive.
\end{mthm}

\begin{proof}
Let~$Q$ be an SI medial quandle. If~$Q$ is a projection quandle, then every equivalence is a congruence and hence
$|Q|=2$.
So, we can assume that $Q$ is a non-projection SI medial quandle.

If $Q$ is connected, then it is an Alexander quandle $(A,f)$ with $(1-f)(A)=A$. If $\Ker(1-f)=\{0\}$, then $Q$ is latin.
Otherwise, $Q$ is infinite and quasi-reductive.

Let $Q$ be a sum of an indecomposable affine mesh $(A_i;\varphi_{i,j};c_{i,j})$ over a set $I$ with $|I|\geq 2$.
Since $Q$ is non-connected, by Corollary \ref{cor:canorb}, either there is $k\in I$ with $|A_k|=1$ or for each $i\in I$, $\Ker(\varphi_{i,i})\neq\{0\}$. By Theorem \ref{thm:2-reductive}, in the former case, $Q$ is strictly 2-reductive, and thus obviously quasi-reductive.

Suppose now that for each $i\in I$, $\Ker(\varphi_{i,i})\neq\{0\}$ and there are $i\neq s\in I$ such that $\Ker(\varphi_{i,s})=\{0\}$.
Then by (M3), for each $t\in I$ one obtains $\varphi_{s,t}\varphi_{i,s}=\varphi_{i,t}\varphi_{i,i}$,
which implies $\Ker(\varphi_{s,t})\neq\{0\}$.

Therefore, if for each $i\in I$, $\Ker(\varphi_{i,i})\neq\{0\}$, then there is $j\in I$ such that for every $k\in I$, $\Ker(\varphi_{j,k})\neq\{0\}$.
If $\bigcap_{k\in I}\Ker(\varphi_{j,k})\neq\{0\}$, then of course, $Q$ is quasi-reductive. Assume that $\bigcap_{k\in I}\Ker(\varphi_{j,k})=\{0\}$. Then there are $0\neq a\in A_j$ and $k_1\neq k_2\in I$ such that $\varphi_{j,k_1}(a)=0$ and
$\varphi_{j,k_2}(a)\neq0$. This implies that for every $k\in I$ we have
\begin{align*}
0=\varphi_{k_1,k}\varphi_{j,k_1}(a)=\varphi_{k_2,k}\varphi_{j,k_2}(a).
\end{align*}
Hence, for every $k\in I$, $0\neq\varphi_{j,k_2}(a)\in \Ker(\varphi_{k_2,k})$. So we obtain that $\bigcap_{k\in I}\Ker(\varphi_{k_2,k})\neq\{0\}$ which means that $Q$ is quasi-reductive.
\end{proof}

\begin{corollary}\label{cor:ncAlex}
The only subdirectly irreducible non-connected Alexander quandle has two elements.
\end{corollary}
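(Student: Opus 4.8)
The plan is to deduce the statement from Main Theorem~\ref{all_si} together with the congruence construction of Example~\ref{exm:congr}. Write the Alexander quandle as $(A,\phi)$, so that $x\ast y=(1-\phi)(x)+\phi(y)$ and the orbits are exactly the cosets of $B:=\im(1-\phi)$; thus $Q$ is non-connected precisely when $B\neq A$. I would first dispose of the latin case: if $Q$ is latin then every right translation $R_e\colon x\mapsto (1-\phi)(x)+\phi(e)$ is bijective, so $1-\phi$ is onto, $B=A$, and $Q$ is connected. Hence a non-connected $Q$ is not latin, and if moreover $|A|>2$, Main Theorem~\ref{all_si} forces $Q$ to be quasi-reductive.

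Next I would extract the algebraic content of quasi-reductivity. By the affine-mesh formulation of quasi-reductivity there is an orbit $A_{i_0}$ with $\bigcap_{j}\Ker(\varphi_{i_0,j})\neq\{0\}$; writing out $\varphi_{i_0,j}$ for the canonical mesh one sees this is equivalent to $\Ker(1-\phi)\cap B\neq\{0\}$, so I may fix $0\neq g\in\Ker(1-\phi)\cap B$. The crucial observation is that $Q$ is homogeneous: each translation $x\mapsto x+c$ is a quandle automorphism, since a direct check gives $(x+c)\ast(y+c)=(x\ast y)+c$, and these translations act transitively on the set of orbits $A/B$. Consequently the property $\bigcap_{j}\Ker(\varphi_{i,j})\neq\{0\}$, being an invariant preserved by automorphisms, holds for \emph{every} orbit $A_i$, not just for $A_{i_0}$.

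With this in hand I would produce two congruences meeting trivially. Since $Q$ is non-connected, $|A/B|\geq 2$, so I may choose two distinct orbits $A_{i_0}$ and $A_{i_1}$. Applying Example~\ref{exm:congr} to each of them (in both cases using the cyclic submodule generated by $g$, which lies in the relevant $\bigcap_{j}\Ker(\varphi_{i,j})$) yields non-trivial congruences $\theta_0,\theta_1\subseteq\pi$ such that $\theta_0$ is trivial on every orbit other than $A_{i_0}$ and $\theta_1$ is trivial on every orbit other than $A_{i_1}$. Because $\theta_0,\theta_1\subseteq\pi$ never relate points lying in different orbits, any pair collapsed by both $\theta_0$ and $\theta_1$ would have to lie simultaneously in $A_{i_0}$ and in $A_{i_1}$; as $i_0\neq i_1$ this forces $\theta_0\cap\theta_1$ to be the identity. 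Two non-trivial congruences with trivial intersection contradict subdirect irreducibility, so $|A|\leq 2$, and the only non-connected Alexander quandle on two elements is the two-element projection quandle.

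The main obstacle I anticipate is the homogeneity step that upgrades the single quasi-reductive orbit supplied by Main Theorem~\ref{all_si} to \emph{all} orbits: distributing the two congruences over distinct orbits is exactly what makes their intersection trivial, and for this I genuinely need at least two orbits to carry a non-trivial $\bigcap_{j}\Ker(\varphi_{i,j})$. Everything else—the reduction to the quasi-reductive case via the non-latin observation, and the disjointness of $\theta_0$ and $\theta_1$—is then routine.
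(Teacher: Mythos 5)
Your proof is correct, and it reaches the contradiction by a construction genuinely different from the paper's. Both arguments share the same skeleton — Main Theorem~\ref{all_si} plus non-connectedness yields quasi-reductivity, i.e.\ $\Ker(1-\phi)\cap\im(1-\phi)\neq\{0\}$, and then two non-trivial congruences with trivial intersection kill subdirect irreducibility — but the congruences are built differently. The paper defines two explicit relations directly on $A$: $\rho_1$ identifies elements of $\im(1-f)$ differing by an element of $\Ker(1-f)$, while $\rho_2$ does the same on the \emph{complement} of $\im(1-f)$; it then verifies the congruence property by a one-line computation $a*b=(1-f)(a-b)+b$. Note that the paper's $\rho_2$ is supported on a union of orbits and may relate elements of \emph{different} cosets of $\im(1-f)$ when $\Ker(1-f)\not\subseteq\im(1-f)$, so it need not lie below $\pi$ — the verification is ad hoc but self-contained. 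You instead route everything through the Section~3 machinery (Theorem~\ref{thm:congruences}, Lemma~\ref{lmexm1:congr}, Example~\ref{exm:congr}) to get two congruences below $\pi$, each supported on a single orbit, which makes the triviality of the intersection immediate; the price is your extra homogeneity step, needed to put a non-zero element of $\bigcap_j\Ker(\varphi_{i,j})$ into a second orbit. That step is valid (translations are automorphisms and the property transfers by~\eqref{auto}), though for an Alexander quandle it is not even necessary: in the canonical mesh one computes $\varphi_{e,f}(x)=(1-\phi)(x-e)$, so all the mesh homomorphisms coincide with $(1-\phi)$ restricted to $\im(1-\phi)$ and the kernel condition is automatically orbit-independent. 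In short: the paper's route is shorter and self-contained; yours reuses the general congruence correspondence and makes the disjointness of supports structural rather than computational.
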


\begin{proof} Assume for contradiction $(A,f)$ is a non-connected SI Alexander quandle with more than two elements. Then $(A,f)$ is non-projection and $\im(1-f)\neq\{0\}$.
By Theorem \ref{all_si} the Alexander quandle $(A,f)$ is quasi-reductive, which implies that $\Ker(1-f)\neq\{0\}$.
Each orbit of an Alexander quandle $(A,f)$ is isomorphic to the quandle $(\im(1-f),f)$ (\cite[Proposition 7.1]{HSV}). Let $\Ker(1-f)\cap \im(1-f)=\{0\}$. This means that each orbit $(\im(1-f),f)$ is latin and therefore the quandle $(A,f)$ is isomorphic to a direct product of a latin quandle and a projection quandle (\cite[Theorem 5.5]{JPSZ}). Since $(A,f)$ is SI we must have $\Ker(1-f)\cap \im(1-f)\neq\{0\}$.
Let for $a,b\in A$
$$(a,b)\in\rho_1 \Leftrightarrow\begin{cases}
                                 a=b,\;{\rm or}\\
                                 a\in \im(1-f) \wedge b\in \im(1-f) \wedge a-b\in\Ker(1-f),
                                \end{cases}$$
and let
$$(a,b)\in\rho_2 \Leftrightarrow\begin{cases}
                                 a=b, \;{\rm or}\\
                                 a\notin \im(1-f) \wedge b\notin \im(1-f) \wedge a-b\in\Ker(1-f).
                                \end{cases}$$
We claim that both $\rho_1$ and $\rho_2$ are congruences. Let $a\equiv_{\rho_i} \bar a$
and $b\equiv_{\rho_i} \bar b$ for $i\in\{1,2\}$:
\begin{align*}
 a*b &= (1-f) (a) + f(b) = (1-f) (a-b) + b = (1-f)(\bar a-\bar b) + b \equiv_{\rho_i} (1-f)(\bar a-\bar b) + \bar b=\bar a* \bar b.
 \end{align*}
Since $\Ker(1-f)=\Ker(1-f^{-1})$ as well as $\im(1-f)=\im(1-f^{-1})$ and $a\backslash b = (1-f^{-1})(a-b) + b$ we immediately obtain that also
$a\backslash b\equiv_{\rho_i} \bar a\backslash \bar b$.

Now $\rho_1$ and $\rho_2$ are congruences that intersect trivially, a contradiction to the existence of such a quandle.
\end{proof}

By Theorem \ref{thm:congruences}, congruences of a connected Alexander quandle $Q=(A,f)$ and congruences of the
$\Z[t,t^{-1}]$-module~$A$ coincide. In particular, by Theorem \ref{thm:simodule}, the quandle $Q$ is subdirectly irreducible if
and only if $A$ is a subdirectly irreducible $\Z[t,t^{-1}]$-module. Thus classifying SI Alexander quandles is a task equivalent to classifying SI medial quasigroups which is equivalent to classifying SI $\Z[t,t^{-1}]$-modules.

Note the equivalence of SI Alexander quandles and SI modules holds only under the condition of connectedness.
Indeed, subdirectly irreducible modules can give non-connected quandles \zie{which} are not SI, according to Corollary~\ref{cor:ncAlex} and Example \ref{Ex:6.6.} below.

\begin{example}\label{Ex:6.6.}
Consider the Alexander quandle $(\Z_4,3)$. It is isomorphic to the sum of the affine mesh $$((\Z_{2},\Z_{2});\,\left(\begin{smallmatrix}0&0\\0&0\end{smallmatrix}\right);\,
\left(\begin{smallmatrix}0&1\\1&0\end{smallmatrix}\right)).$$
It is clear that there are two minimal elements in the lattice of its congruences: $\{\{0,2\},\{1\},\{3\}\}$
and $\{\{1,3\},\{0\},\{2\}\}$, so $(\Z_4,3)$ is not subdirectly irreducible. However, $\Z_4=Z_{2^2}$ is a subdirectly irreducible $\Z$-module.
\end{example}

\section{Subdirectly irreducible non-connected quasi-reductive quandles}\label{sec:subdirectly irreducible finite reductive quandles}

The structure of connected medial quandles depends on the structure of the underlying module.
According to Main Theorem~\ref{all_si}, every non-connected SI medial quandle is quasi-reductive.
Hence, what remains and what is
the aim of this section, is to describe all non-connected subdirectly irreducible quasi-reductive quandles. We start with the construction of a congruence that will play the crucial role
in our considerations.

Let $Q$ be a quandle and let $\lambda$ be the relation on $Q$ defined by
\begin{equation*}
a\;\lambda \;b\;\ {\rm iff}\;\; \forall (x\in Q)\; a\ast x=b\ast x \;\ {\rm iff}\;\; \forall (x\in Q)\; a\ld x=b\ld x.
\end{equation*}
Obviously, $\lambda$ is a congruence on $Q$~(see~\cite[Section 8.6]{RS}) and each of its blocks is a projection quandle.
Set
\begin{equation}\label{eq:theta}
\theta:=\pi\cap\lambda.
\end{equation}

\begin{lm} Let $Q$ be a medial quandle which is the sum of an indecomposable affine mesh $(A_i;\,\varphi_{i,j};\,c_{i,j})$ over a set $I$. Then $M_{\theta|_{A_i}}=\bigcap\limits_{j\in I}\mathrm{Ker}(\varphi_{i,j})$, for each $i\in I$.
\end{lm}
\begin{proof}
To see it, let $a,b\in A_i$. Then
\begin{align*}
a\;\theta\; b\;\;\Leftrightarrow\;\;\forall (j\in I)\;\forall (x\in A_j)\; a\ast x=b\ast x\;\;\Leftrightarrow\\
\forall (j\in I)\;\forall (x\in A_j) \;\;c_{i,j}+\varphi_{i,j}(a)+(1-\varphi_{j,j})(x)=c_{i,j}+\varphi_{i,j}(b)+(1-\varphi_{j,j})(x)\;\;\Leftrightarrow\\
\forall (j\in I) \;\;\varphi_{i,j}(a)=\varphi_{i,j}(b)\;\;\Leftrightarrow\;\; \forall (j\in I) \;\;a-b\in \mathrm{Ker}(\varphi_{i,j})\;\;\Leftrightarrow
a-b\in \bigcap\limits_{j\in I}\mathrm{Ker}(\varphi_{i,j}).
\end{align*}
\end{proof}

An alternative definition of quasi-reductivity says that a medial quandle is quasi-reductive if and only if~$\theta$
is non-trivial on~$Q$. The class of quasi-reductive medial quandles contains all non-projection reductive medial
quandles, according to Proposition~\ref{prop:theta}.
But not every quasi-reductive quandle is reductive, examples are given by the non-connected Alexander quandle $(\Z_6,-1)$ or connected Alexander quandles constructed in Example \ref{exm:Prugr}. See also Examples \ref{exm:pp} and \ref{exm:pp1}.

From now on, we shall suppose that $Q$ is quasi-reductive and non-connected SI medial quandle which is the sum of an
indecomposable affine mesh $(A_i;\,\varphi_{i,j};\,c_{i,j})$ over (at least two element)
set $I$. By Theorem \ref{thm:KK}, the quandle $Q$ is of set type. Let $\mu$ be the monolith congruence of $Q$. By Theorem \ref{thm:simodule} there is $i_0\in I$ such that $A_{i_0}$ is a subdirectly irreducible module and
the socle $S$ of  $A_{i_0}$ is a non-trivial block of $\mu$. In the sequel, we set $i_0=1$.
By Example \ref{exm:congr} and the minimality of the monolith $\mu$, $A_1$ is the only orbit where $\mu$ and $\theta$ are non-trivial, i.e. $S\subseteq\bigcap_{j\in I}\Ker(\varphi_{1,j})\neq\{0\}$.
Example \ref{exm:pp1} shows that the relations $\mu$ and $\theta$ do not have to be equal.

Note that for each $a\in Q$, $L_a|_{Qe}$ is an automorphism of the orbit $Qe$ for any element $e\in Q$.

\begin{lm}\label{lem4:subdir} Let $a,b\in Q\setminus A_1$. If for each
$x\in A_1$, $a\ast x=b\ast x$, then $a=b$.
\end{lm}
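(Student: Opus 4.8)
The plan is to argue by contradiction: suppose $a\neq b$. Since $Q$ is subdirectly irreducible and $a\neq b$, the principal congruence $\Theta(a,b)$ is non-trivial, and so it must contain the monolith $\mu$. By the description of $\mu$ fixed in the setup, its only non-trivial block is the socle $S\subseteq A_1$, so $\mu$ is non-trivial on the orbit $A_1$. Hence it suffices to prove that $\Theta(a,b)$ restricted to $A_1$ is \emph{trivial}, as this contradicts $\mu\subseteq\Theta(a,b)$. I would prove triviality using the recursive description $\Theta(a,b)=\bigcup_{n}X_n$ recalled above, showing by induction that each $X_n\cap(A_1\times A_1)$ is the diagonal of $A_1$.

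The argument rests on two invariants preserved along the recursion. First, the \emph{orbit invariant}: for every $(u,v)\in\Theta(a,b)$ the elements $u,v$ either lie in a common orbit or their orbits are exactly $A_i\ni a$ and $A_{i'}\ni b$. This holds because $\Theta(a,b)\subseteq\Theta(a,b)\vee\pi$, and since $Q/\pi$ is the right projection quandle (Proposition \ref{lm1:congruences}), on which every equivalence is a congruence, the join $\Theta(a,b)\vee\pi$ is just the congruence merging the orbits of $a$ and $b$; as $a,b\notin A_1$, no pair of $\Theta(a,b)$ can relate an element of $A_1$ to an element of another orbit. Second, the \emph{translation invariant}: every $(u,v)\in\Theta(a,b)$ satisfies $L_u|_{A_1}=L_v|_{A_1}$. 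Its base case $(a,b)$ is precisely the hypothesis, and it is stable under $\ast$ and $\ld$ because $L_{u\ast v}={}^{L_u}L_v$ and $L_{u\ld v}={}^{L_u^{-1}}L_v$ by \eqref{auto}, while each $L_u$ fixes every orbit setwise (since $u\ast z\;\pi\;z$); restricting these conjugation formulas to $A_1$ and invoking the invariant on both input pairs delivers it for the output.

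With these in hand the induction is short. A pair in $A_1\times A_1$ produced by $\ast$ has the form $(x\ast x',y\ast y')$; since $x\ast x'$ lies in the orbit of $x'$, membership in $A_1\times A_1$ forces $x',y'\in A_1$, hence $(x',y')\in X_n\cap(A_1\times A_1)$, which is diagonal by induction, so $x'=y'$. The translation invariant applied to $(x,y)$ then yields $x\ast x'=L_x(x')=L_y(x')=y\ast y'$, a diagonal pair; the case of $\ld$ is identical. A pair $(u,w)\in A_1\times A_1$ obtained by transitivity from $(u,v),(v,w)\in X_n$ forces, by the orbit invariant, the intermediate $v$ to lie in $A_1$ as well (its orbit must coincide with that of $u\in A_1$, being neither $A_i$ nor $A_{i'}$), so both links are diagonal and $u=w$. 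Therefore $X_{n+1}\cap(A_1\times A_1)$ is again diagonal, completing the induction.

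I expect the transitivity step to be the only genuine obstacle: a priori a chain $u\sim v\sim w$ with $u,w\in A_1$ might leave $A_1$ at the middle element $v$, which would break the induction. The orbit invariant is exactly the tool that forbids this, so I would set it up carefully first; the remainder is routine bookkeeping with the two quandle identities $u\ast z\;\pi\;z$ and $L_{u\ast v}={}^{L_u}L_v$.
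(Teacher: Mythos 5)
Your proof is correct and takes essentially the same route as the paper: both reduce the claim to showing $\Theta(a,b)|_{A_1}=\Delta$ and then contradict subdirect irreducibility using the setup facts about $A_1$. The paper simply asserts the triviality of $\Theta(a,b)|_{A_1}$ and closes by intersecting with $\theta$ (non-trivial only on $A_1$), whereas you supply a full inductive verification via the congruence-generation recursion (your orbit and translation invariants are exactly the right bookkeeping) and close via $\mu\subseteq\Theta(a,b)$ with $\mu|_{A_1}$ non-trivial --- an equivalent use of the same facts.
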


\begin{proof}
Assume for contradiction $a\neq b\in Q\setminus A_1$ and $a\ast x=b\ast x$, for every $x\in A_1$. Then
$\Theta(a,b)|_{A_1}=\Delta$.
Since the relation $\theta|_{(Q\setminus A_1)}$ is also trivial, it
follows that $\Theta(a,b)\cap\theta=\Delta$. So $Q$ can
not be subdirectly irreducible.
\end{proof}

\begin{cor}\label{cor1:subdir}
For each $1\neq i\in I$, the homomorphism $\varphi_{i,1}\colon A_i\to A_1$ is an injection.
\end{cor}

\begin{proof}
Let $1\neq i\in I$, and $\varphi_{i,1}(a)=\varphi_{i,1}(b)$ for some $a,b\in A_i$.
Hence, for any $x\in A_1$
\begin{align*}
a\ast
x=c_{i,1}+\varphi_{i,1}(a)+(1-\varphi_{1,1})(x)=c_{i,1}+\varphi_{i,1}(b)+(1-\varphi_{1,1})(x)=b\ast
x.
\end{align*}
Hence, by Lemma \ref{lem4:subdir}, $a=b$.
\end{proof}

By Corollary \ref{cor1:subdir}, we can
assume that, for each $1\neq i\in I$, the orbit $A_i$ is a submodule of
$A_1$. In such case, $A_i\subseteq A_1$, and $\varphi_{i,1}=1_{A_i}$.
Furthermore, by (M3) it follows that for each $i,j\in I\setminus\{1\}$ and $a\in A_i\subseteq A_1$
\begin{equation*}
\varphi_{i,i}(a)=\varphi_{i,1}\varphi_{i,i}(a)=\varphi_{1,1}\varphi_{i,1}(a)=\varphi_{1,1}(a),
\end{equation*}
hence $\varphi_{i,i}=\varphi_{1,1}|_{A_i}$. Moreover,
\begin{align*}
\varphi_{1,i}=\varphi_{i,1}\varphi_{1,i}=\varphi_{1,1}^2,\\
\varphi_{i,j}=\varphi_{j,1}\varphi_{i,j}=\varphi_{1,1}\varphi_{i,1}=\varphi_{1,1}|_{A_i}.
\end{align*}
Each summand~$A_i$ can be structurally viewed either as an $R$-module (where $R$ is a suitable homomorphic image of the ring $\Z[t,t^{-1}]$) or as a permutation group acting
on~$Q$. We need both the features and therefore the summands will be treated either as modules or
as permutation groups, according to our needs.

Let, for each $j\in I$, denote by $0_j$  the neutral element of~$A_j$.
\begin{lm}\label{cor2:subdir}
For each $i\neq j\in I\setminus\{1\}$, $c_{i,1}\neq c_{j,1}$.
\end{lm}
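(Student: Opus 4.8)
The plan is to argue by contradiction and reduce everything to Lemma~\ref{lem4:subdir}. Suppose that $c_{i,1}=c_{j,1}$ for some pair $i\neq j$ in $I\setminus\{1\}$. I would then compare how the neutral elements $0_i\in A_i$ and $0_j\in A_j$ act on the distinguished orbit $A_1$ by right multiplication, using the explicit formula $a\ast b=c_{k,1}+\varphi_{k,1}(a)+(1-\varphi_{1,1})(b)$ for the sum of the affine mesh (here $a\in A_k$, $b\in A_1$).

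First I would compute, for an arbitrary $x\in A_1$,
\begin{align*}
0_i\ast x &= c_{i,1}+\varphi_{i,1}(0_i)+(1-\varphi_{1,1})(x) = c_{i,1}+(1-\varphi_{1,1})(x),
\end{align*}
where the middle term vanishes because $\varphi_{i,1}$ is a group homomorphism (indeed, under the normalization of Corollary~\ref{cor1:subdir} it is the inclusion $1_{A_i}$, which sends $0_i$ to $0$). The identical computation gives $0_j\ast x=c_{j,1}+(1-\varphi_{1,1})(x)$. Under the assumption $c_{i,1}=c_{j,1}$ the two right-hand sides coincide, so $0_i\ast x=0_j\ast x$ for every $x\in A_1$.

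Finally I would invoke Lemma~\ref{lem4:subdir}: since $i,j\neq 1$, both $0_i$ and $0_j$ lie in $Q\setminus A_1$, and they agree on all of $A_1$ under right multiplication, so the lemma forces $0_i=0_j$. But $0_i$ and $0_j$ belong to the distinct orbits $A_i$ and $A_j$, which are disjoint summands of the mesh, so $0_i\neq 0_j$ -- a contradiction. Hence $c_{i,1}\neq c_{j,1}$. There is no substantial obstacle beyond bookkeeping; the only point I would check carefully is that the hypotheses of Lemma~\ref{lem4:subdir} genuinely apply, namely that the two test elements sit outside $A_1$ in distinct orbits and that the homomorphism term $\varphi_{i,1}(0_i)$ drops out on the neutral element.
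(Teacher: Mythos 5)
Your proposal is correct and follows exactly the paper's argument: assume $c_{i,1}=c_{j,1}$, observe that then $0_i\ast x=0_j\ast x$ for all $x\in A_1$, and invoke Lemma~\ref{lem4:subdir} to force $0_i=0_j$, contradicting the disjointness of the orbits. The only difference is that you write out the mesh computation showing $\varphi_{i,1}(0_i)=0$, which the paper leaves implicit.
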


\begin{proof}
Suppose, that there are $i\neq j\in I\setminus\{1\}$ such that
$c_{i,1}=c_{j,1}$. Then for all $x\in A_1$, $0_i\ast x=0_j\ast x$.
Hence, by Lemma \ref{lem4:subdir}, $0_i=0_j$, a contradiction.
\end{proof}

\begin{lm}\label{lem6:subdir}
For each $ i\neq j\in I\setminus\{1\}$, the constants $c_{i,j}$ are uniquely determined only by the constants $c_{i,1}\in A_1$.
\end{lm}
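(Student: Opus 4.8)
The plan is to read the value of $c_{i,j}$ directly off condition (M4), specialized to the index $k=1$. Recall that, by the normalization fixed just before the statement, every orbit $A_i$ with $i\neq 1$ is identified with an $R$-submodule of $A_1$, and $\varphi_{i,1}=1_{A_i}$ is the corresponding inclusion. Applying (M4) to the triple of indices $(i,j,1)$ yields $\varphi_{j,1}(c_{i,j})=\varphi_{1,1}(c_{i,1}-c_{j,1})$. No other part of the affine-mesh machinery (indecomposability, the formulas for $\varphi_{i,j}$ with $i,j\neq 1$, etc.) is needed for the argument.

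First I would simplify the left-hand side. Since $c_{i,j}\in A_j$ and $\varphi_{j,1}$ is the inclusion $A_j\hookrightarrow A_1$, we have $\varphi_{j,1}(c_{i,j})=c_{i,j}$, now viewed as an element of $A_1$. Hence $c_{i,j}=\varphi_{1,1}(c_{i,1}-c_{j,1})$ in $A_1$. As $\varphi_{1,1}$ is a fixed endomorphism of $A_1$ and the right-hand side involves only the ``column'' constants $c_{i,1},c_{j,1}\in A_1$, this exhibits $c_{i,j}$ as a function of the constants $c_{\cdot,1}$ alone, which is exactly the assertion of the lemma.

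The only point requiring care — and the closest thing to an obstacle — is the bookkeeping of the identifications: $c_{i,j}$ is a priori an element of $A_j$, whereas the derived formula produces an element of $A_1$. Because the inclusion $\varphi_{j,1}$ is injective, the equality $\varphi_{j,1}(c_{i,j})=\varphi_{1,1}(c_{i,1}-c_{j,1})$ determines $c_{i,j}$ uniquely inside $A_j$, so no ambiguity remains. I would also remark in passing that the computation forces $\varphi_{1,1}(c_{i,1}-c_{j,1})$ to lie in the submodule $A_j\subseteq A_1$; this is an automatic consistency condition, since the element it equals, namely $c_{i,j}$, already belongs to $A_j$ by definition of the mesh.
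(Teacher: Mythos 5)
Your proof is correct and follows essentially the same route as the paper's: both apply (M4) with $k=1$ and use the normalization $\varphi_{j,1}=1_{A_j}$ to read off $c_{i,j}=\varphi_{1,1}(c_{i,1}-c_{j,1})$. Your extra remark that injectivity of $\varphi_{j,1}$ resolves the $A_j$-versus-$A_1$ bookkeeping is a fine (if implicit in the paper) clarification, but the argument is the same.
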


\begin{proof}
It straightforwardly follows by (M4) that, for any $i,j,k\in I$, $\varphi_{j,k}(c_{i,j})=\varphi_{k,k}(c_{i,k}-c_{j,k})$.
Hence, for $k=1$ and $j\neq 1$ we obtain
\begin{align*}
c_{i,j}=\varphi_{j,1}(c_{i,j})=\varphi_{1,1}(c_{i,1}-c_{j,1}).
\end{align*}
In particular, for $j\neq 1$ and $i=1$
\begin{align*}
c_{1,j}=\varphi_{1,1}(c_{1,1}-c_{j,1})=-\varphi_{1,1}(c_{j,1}).
\end{align*}
\end{proof}

Let $\varphi:=\varphi_{1,1}$. Corollary \ref{cor1:subdir} and Lemma \ref{lem6:subdir} directly imply the following lemma.
\begin{lm}\label{lem7:subdir}
For each $1\neq i\in I$, $A_i=\varphi(A_1)$.
\end{lm}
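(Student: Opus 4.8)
The plan is to establish the two inclusions $A_i\subseteq\varphi(A_1)$ and $\varphi(A_1)\subseteq A_i$ separately, working entirely inside $A_1$. I would use the normalizations already in force: for $i\neq1$ the orbit $A_i$ is a submodule of $A_1$ with $\varphi_{i,1}$ the inclusion, $\varphi_{i,i}=\varphi$, $\varphi_{1,i}=\varphi^2$, and $\varphi_{i,j}=\varphi$ for $i,j\neq1$; moreover $c_{k,i}=\varphi(c_{k,1}-c_{i,1})$ for $i\neq1$ by Lemma~\ref{lem6:subdir}, and $\varphi(A_1)=\im(\varphi)$ is a submodule of $A_1$. Since $\varphi(A_1)$ and $A_i$ are subgroups, it suffices to locate each relevant group generator.

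For $A_i\subseteq\varphi(A_1)$ I would expand the indecomposability presentation $A_i=\langle\bigcup_{k\in I}(c_{k,i}+\im(\varphi_{k,i}))\rangle$ and check that every generator lies in $\im(\varphi)$. The constants give $c_{k,i}=\varphi(c_{k,1}-c_{i,1})\in\im(\varphi)$; the homomorphic parts give $\varphi_{1,i}(A_1)=\varphi^2(A_1)\subseteq\varphi(A_1)$ for $k=1$, and $\varphi_{k,i}(A_k)=\varphi(A_k)\subseteq\varphi(A_1)$ for $k\neq1$, using $A_k\subseteq A_1$. As these generate $A_i$ as a group and $\varphi(A_1)$ is a subgroup, $A_i\subseteq\varphi(A_1)$ follows.

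For the reverse inclusion I would feed the inclusion just proved into the indecomposability of $A_1$: since $c_{1,1}=0$, $\varphi_{1,1}=\varphi$, and $\varphi_{k,1}$ is the inclusion of $A_k\subseteq\varphi(A_1)$, this presentation reads $A_1=\langle\varphi(A_1),\,c_{k,1}\ (k\neq1)\rangle$, and applying $\varphi$ yields $\varphi(A_1)=\langle\varphi^2(A_1),\,\varphi(c_{k,1})\ (k\neq1)\rangle$. Now $\varphi^2(A_1)=\varphi_{1,i}(A_1)\subseteq A_i$ immediately, since $\varphi_{1,i}$ maps into $A_i$. For the remaining generators I would note that the constants $c_{k,i}$ and $c_{1,i}$ lie in $A_i$ by definition; together with $c_{1,i}=-\varphi(c_{i,1})$ (the case $k=1$ of $c_{k,i}=\varphi(c_{k,1}-c_{i,1})$, using $c_{1,1}=0$) and $c_{k,i}=\varphi(c_{k,1})-\varphi(c_{i,1})$, this gives $\varphi(c_{k,1})=c_{k,i}+\varphi(c_{i,1})\in A_i$ for every $k\neq1$. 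Hence every generator of $\varphi(A_1)$ lies in $A_i$, so $\varphi(A_1)\subseteq A_i$, and equality follows.

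The step I expect to carry the real weight is producing $\varphi(c_{k,1})\in A_i$ for $k\neq i$: a priori $\varphi(c_{k,1})=-c_{1,k}$ only sits in $A_k$, so one cannot argue orbit by orbit. The relation $c_{k,i}=\varphi(c_{k,1})-\varphi(c_{i,1})$ coming from (M4) is exactly what transports it into $A_i$. A clean byproduct worth recording is that the generating set extracted for $A_i$ in the first inclusion, namely $\langle\varphi^2(A_1),\,\varphi(A_k),\,\varphi(c_{k,1})\ (k\neq1)\rangle$, does not depend on $i$; thus all orbits $A_i$ with $i\neq1$ coincide, and each equals $\varphi(A_1)$.
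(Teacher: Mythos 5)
Your proof is correct and follows essentially the same route as the paper: both inclusions come from indecomposability, first reading off the generators of $A_i$ to get $A_i\subseteq\varphi(A_1)$, then applying $\varphi$ to the generating presentation of $A_1$ to get the reverse inclusion. Your only addition is to spell out the step the paper leaves implicit, namely that $\varphi(c_{k,1})=c_{k,i}+\varphi(c_{i,1})=c_{k,i}-c_{1,i}\in A_i$ via (M4), which is a fair and correct elaboration rather than a different argument.
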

\begin{proof}
Let $1\neq j\in I$. By indecomposability, for each $1\neq i\in I$,
the group $A_i$ is generated by the following sets:
$\varphi_{1,i}(A_1)=\varphi^2(A_1)$,
$\varphi_{j,i}(A_j)=\varphi(A_j)$, and all elements
$c_{1,i}=-\varphi(c_{i,1})$, and $c_{j,i}=\varphi(c_{j,1}-c_{i,1})$.
Since each $A_j$ is a subgroup of $A_1$, it is evident that
$A_i\subseteq \varphi(A_1)$.

On the other hand, the group $A_1$ is generated by the following sets:
$\varphi_{1,1}(A_1)=\varphi(A_1)$, $\varphi_{j,1}(A_j)=A_j$, and all
constants $c_{j,1}$. Hence, $\varphi(A_1)$ is generated by
$\varphi^2(A_1)$, $\varphi(A_j)$, and $\varphi(c_{j,1})$, which shows
that $A_i=\varphi(A_1)$ for each $1\neq i\in I$.
\end{proof}

\begin{lm}\label{lem8:subdir}
For $i\neq j$, $c_{i,1}-c_{j,1}\notin \varphi(A_1)$.
\end{lm}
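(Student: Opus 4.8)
The plan is to argue by contradiction and reduce the statement to Lemma~\ref{lem4:subdir}, in the same spirit as the proofs of Corollary~\ref{cor1:subdir} and Lemma~\ref{cor2:subdir}. Fix $i\neq j$ in $I\setminus\{1\}$ (the range understood from the preceding lemmas) and suppose, contrary to the claim, that $c_{i,1}-c_{j,1}\in\varphi(A_1)$. By Lemma~\ref{lem7:subdir} we have $\varphi(A_1)=A_i$, and since $\varphi(A_1)$ is a submodule it is closed under negation; hence the element $a:=c_{j,1}-c_{i,1}$ is a genuine element of the orbit $A_i$. This is the crucial observation: the membership hypothesis is exactly what places the candidate witness $a$ inside the orbit $A_i$.

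The key step is then to check that $a$ and $0_j$ act identically on the orbit $A_1$. Using the multiplication formula for the sum of the affine mesh together with the normalizations $\varphi_{i,1}=1_{A_i}$ and $\varphi_{i,i}=\varphi_{1,1}=\varphi$, I would compute, for every $x\in A_1$,
\begin{align*}
a\ast x &= c_{i,1}+\varphi_{i,1}(a)+(1-\varphi)(x) = c_{i,1}+a+(1-\varphi)(x)\\
&= c_{i,1}+(c_{j,1}-c_{i,1})+(1-\varphi)(x) = c_{j,1}+(1-\varphi)(x) = 0_j\ast x,
\end{align*}
where the final equality uses $\varphi_{j,1}(0_j)=0$. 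Thus $a\ast x=0_j\ast x$ for all $x\in A_1$.

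Since $i,j\neq 1$, both $a\in A_i$ and $0_j\in A_j$ lie in $Q\setminus A_1$, so Lemma~\ref{lem4:subdir} applies and forces $a=0_j$. This is impossible: $a$ belongs to the orbit $A_i$ while $0_j$ belongs to the distinct orbit $A_j$, and distinct orbits are disjoint subsets of $Q$. This contradiction establishes $c_{i,1}-c_{j,1}\notin\varphi(A_1)$.

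I do not expect a real technical obstacle here; the entire content lies in choosing the witness $a=c_{j,1}-c_{i,1}$ and recognizing that the hypothesis $c_{i,1}-c_{j,1}\in\varphi(A_1)$ is precisely the condition guaranteeing $a$ lands in $A_i$, which is what makes Lemma~\ref{lem4:subdir} applicable. The only point requiring care is keeping the two roles of $A_i$ straight --- as an $R$-submodule of $A_1$ (so that $a$ is a well-defined group element and $\varphi_{i,1}(a)=a$) and as a separate orbit of $Q$ (so that $a\neq 0_j$). This is exactly the dual viewpoint already set up in the discussion preceding the lemma.
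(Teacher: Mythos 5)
Your proof is correct and follows essentially the same route as the paper's: the paper sets $a=c_{i,1}-c_{j,1}\in\varphi(A_1)=A_j$ and shows $0_i\ast b=a\ast b$ for all $b\in A_1$, then invokes Lemma~\ref{lem4:subdir} to get the contradiction, which is the mirror image (up to a sign and swapping the roles of $A_i$ and $A_j$) of your computation $a\ast x=0_j\ast x$ with $a=c_{j,1}-c_{i,1}\in A_i$. Your closing remark on distinguishing $\varphi(A_1)$ as a submodule of $A_1$ from its disjoint copies as orbits of $Q$ is exactly the point that makes the final contradiction (equality across distinct orbits) valid, and it matches the paper's intended reading.
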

\begin{proof}
Assume
$c_{i,1}-c_{j,1}\in\varphi(A_1)=A_j$,  for some $i\neq j\in I$.
Then there exists $a\in A_j$
such that $c_{i,1}=c_{j,1}+a$. But, for each $b\in A_1$,
$0_i\ast b=(1-\varphi)(b)+c_{i,1}=(1-\varphi)(b)+a+c_{j,1}=a\ast b$.
Then, by Lemma~\ref{lem4:subdir}, $0_i=a$, a contradiction with Lemma \ref{cor2:subdir}.
\end{proof}

Lemma \ref{lem8:subdir}
gives an upper bound for the
number of orbits in a non-connected SI quasi-reductive
medial quandle.

\begin{cor}\label{thm2:subdir} Let $\kappa=\left | A_1/\varphi(A_1)\right|$. The number of orbits in $Q$
is at most $\kappa+1$.
\end{cor}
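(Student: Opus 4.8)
The plan is to read the bound directly off Lemma~\ref{lem8:subdir}, after recording what $\kappa$ is counting. Recall first that, since $Q$ is the sum of the indecomposable affine mesh $(A_i;\varphi_{i,j};c_{i,j})$, Theorem~\ref{thm:decomposition} identifies the orbits of $Q$ with the groups $A_i$ of the mesh, so the number of orbits equals $|I|$. It therefore suffices to prove $|I|\le\kappa+1$, and by the normalizations established earlier we may assume $\varphi=\varphi_{1,1}$, that each $A_i$ with $i\neq 1$ equals $\varphi(A_1)$ (Lemma~\ref{lem7:subdir}), and that the constants $c_{i,1}$ lie in $A_1$.

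The key idea is to attach to every orbit other than $A_1$ a coset of $\varphi(A_1)$ in $A_1$ and to show this assignment is injective. Concretely, I would consider the map
$$I\setminus\{1\}\longrightarrow A_1/\varphi(A_1),\qquad i\longmapsto c_{i,1}+\varphi(A_1).$$
This is well defined precisely because each $c_{i,1}\in A_1$. Its injectivity is exactly the content of Lemma~\ref{lem8:subdir}: for distinct $i,j\in I\setminus\{1\}$ we have $c_{i,1}-c_{j,1}\notin\varphi(A_1)$, which says that $c_{i,1}$ and $c_{j,1}$ determine different cosets of $\varphi(A_1)$. Thus distinct orbits (other than $A_1$) are sent to distinct elements of the quotient group.

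Counting then finishes the argument. Injectivity gives $|I\setminus\{1\}|\le |A_1/\varphi(A_1)|=\kappa$, and hence $|I|=|I\setminus\{1\}|+1\le\kappa+1$; since the orbits coincide with the groups $A_i$, the number of orbits is at most $\kappa+1$.

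Because the real work has already been carried out in Lemmas~\ref{lem7:subdir} and~\ref{lem8:subdir}, I do not expect a serious obstacle; the corollary is essentially a bookkeeping consequence of the coset interpretation of Lemma~\ref{lem8:subdir}. The one point I would be careful about is the exact count: the orbit $A_1$ is not forced to occupy a coset distinct from the remaining ones, and indeed $c_{1,1}=0$ sits in the trivial coset $\varphi(A_1)$ while some $c_{i,1}$ with $i\neq 1$ could a priori also land there, so one only harvests $\kappa$ cosets from $I\setminus\{1\}$. This is why the bound is $\kappa+1$ rather than $\kappa$, and it explains the additive constant in the statement.
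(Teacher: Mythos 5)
Your proof is correct and is exactly the argument the paper intends: the corollary is stated as an immediate consequence of Lemma~\ref{lem8:subdir}, which, as you observe, says precisely that $i\mapsto c_{i,1}+\varphi(A_1)$ is injective on $I\setminus\{1\}$, giving $|I|\le\kappa+1$. Your closing remark about why the bound is $\kappa+1$ rather than $\kappa$ (some $c_{i,1}$ with $i\neq1$ may lie in the trivial coset, cf.\ the case $c_{3,1}\in\varphi(A)$ in Lemma~\ref{lm10:subdir}) is also consistent with the paper.
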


\begin{rem}\label{rem:twoorb}
According to~Corollary~\ref{thm2:subdir} a non-connected infinite quasi-reductive SI medial quandle must have at least one infinite orbit.
Moreover, if the group homomorphism $\varphi$ is onto then $Q$  has exactly two orbits.
\end{rem}

Now we are ready to describe the structure of any non-connected SI quasi-reductive medial quandle.
Actually, the already known structure of connected ones can be formulated in the very same theorem.
The theorem is, on purpose, not formulated in the language of affine meshes although they can be clearly visible in it.

\begin{thm}\label{thm1:subdir}
Let $A$ be a subdirectly irreducible $\Z[t,t^{-1}]$-module. Suppose that
$\varphi:a\mapsto (1-t)\cdot a$ is a non-injective endomorphism of~$A$.
Let $C$ be a subset of a transversal to $\varphi(A)$ in~$A$
such that $C\cup\varphi(A)$ generates~$A$.
We denote by $\siq (A,t,C)$ the set $A\cup(\varphi(A)\times C)$
equipped with the following operation~$*$:
\begin{align*}
 a*b&=\varphi(a)+t\cdot b, && \text{for } a,b\in A\\
 (a,i)*(b,j)&=(\varphi(a+i-j)+t\cdot b,j), && \text{for }a,b\in\varphi(A),\ i,j\in C\\
 (a,i)*b&=     a+t\cdot b+i,&&\text{for }a\in\varphi(A),\ i\in C,\ b\in A\\
 a * (b,j) &= (\varphi(\varphi(a)-j)+t\cdot b,j),&&\text{for }a\in A,\ b\in\varphi(A),\ j\in C\\
\end{align*}
and with the operation~$\backslash$ defined as the left division with respect to~$*$.

Then the algebra $\siq(A,t,C)$ is a subdirectly irreducible quasi-reductive medial quandle.
Conversely, every subdirectly irreducible quasi-reductive medial quandle
is isomorphic to $\siq (A,t,C)$, for some $A$ and~$C$.
\end{thm}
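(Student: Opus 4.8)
The plan is to prove the two implications separately, using throughout the identification -- already made explicit in the paragraph following Corollary~\ref{cor1:subdir} -- that the four displayed operations are exactly the sum-of-affine-mesh operations for the mesh over $I=\{1\}\cup C$ with $A_1=A$, $A_i=\varphi(A)$ for $i\in C$, diagonal maps $\varphi_{1,1}=\varphi_{i,i}=\varphi$ (so $t=1-\varphi$), $\varphi_{i,1}=1_{A_i}$, $\varphi_{1,i}=\varphi^2$, $\varphi_{i,j}=\varphi$ for distinct $i,j\in C$, and constants $c_{i,1}=i$, $c_{1,j}=-\varphi(j)$, $c_{i,j}=\varphi(i-j)$, with $c_{1,1}=c_{i,i}=0$. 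The forward direction then amounts to checking that this data defines an SI quasi-reductive quandle, and the converse amounts to showing that the normalisation carried out in Corollary~\ref{cor1:subdir} and Lemmas~\ref{cor2:subdir}, \ref{lem6:subdir}, \ref{lem7:subdir} and~\ref{lem8:subdir} recovers precisely this data.

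For the forward direction I would first verify that the listed data is an indecomposable affine mesh: (M1) holds since $1-\varphi_{i,i}=t$ is invertible on any $\Z[t,t^{-1}]$-module, (M2) is built in, and (M3), (M4) reduce to short identities in $\varphi$ after the index case distinctions. Indecomposability of $A_1$ follows from the hypothesis that $C\cup\varphi(A)$ generates $A$ (the available generators $\varphi(A)\cup\bigcup_{i\in C}(i+\varphi(A))$ generate $\langle\varphi(A)\cup C\rangle=A$), and indecomposability of each $A_i$ from $\varphi(A)=\langle\varphi(C)\cup\varphi^2(A)\rangle$. By Theorem~\ref{thm:decomposition} the sum is a medial quandle with orbits $A_1$ and the $A_i$. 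Quasi-reductivity is immediate, since $\bigcap_j\Ker(\varphi_{1,j})=\Ker(\varphi)\cap\Ker(\varphi^2)=\Ker(\varphi)\neq\{0\}$ as $\varphi$ is non-injective.

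Subdirect irreducibility is the heart of the forward direction and the step I expect to cost the most. As $A=A_1$ is an SI module, its socle $S$ is the unique minimal submodule, hence $S\subseteq\Ker(\varphi)=\bigcap_j\Ker(\varphi_{1,j})$, and by Lemma~\ref{lmexm1:congr} and Example~\ref{exm:congr} the submodule $S$ induces a non-trivial congruence $\mu$ concentrated on $A_1$ (its off-$A_1$ data is $\varphi^2(S)=\{0\}$). I would prove $\mu$ is the monolith by showing $\pi$ is essential. Given $\nu\neq\Delta$, if $\nu$ collapses two distinct elements of a single orbit we are done; otherwise it collapses $a,b$ in different orbits, and here Lemma~\ref{lem8:subdir} lets me find $x\in A_1$ with $a*x\neq b*x$, since $a*x=b*x$ for all $x\in A_1$ would force $\varphi(a)=b'+j$ or $a'+i=b'+j$, each impossible modulo $\varphi(A)$ because the $i,j\in C$ represent distinct non-trivial cosets. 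Thus $(a*x,b*x)$ is a non-trivial element of $\nu\cap\pi$. So $\nu\cap\pi\neq\Delta$ is a non-trivial congruence below $\pi$; by Theorem~\ref{thm:congruences} it is carried by submodules $(N_i)$ with $\varphi_{i,1}(N_i)\subseteq N_1$, and since $\varphi_{i,1}=1_{A_i}$ this gives $N_i\subseteq N_1$, forcing $N_1\neq\{0\}$ and hence $S\subseteq N_1$ by minimality of the socle. Therefore $\mu\subseteq\nu\cap\pi\subseteq\nu$, so $Q$ is SI with monolith $\mu$.

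For the converse I would start from an arbitrary SI quasi-reductive medial quandle $Q$. If $Q$ is connected it is an Alexander quandle $(A,t)$ with $A$ an SI module and $\varphi=1-t$ non-injective, which is exactly $\siq(A,t,\varnothing)$. If $Q$ is non-connected, then by Theorem~\ref{thm:KK} it is of set type, Theorem~\ref{thm:simodule} supplies the distinguished SI orbit $A_1$ with socle $S$, and Corollary~\ref{cor1:subdir} together with Lemmas~\ref{cor2:subdir}, \ref{lem6:subdir}, \ref{lem7:subdir} and~\ref{lem8:subdir} normalises the mesh to exactly the data of the first paragraph, with $C=\{c_{i,1}\mid i\neq 1\}$ a set of representatives of distinct non-trivial cosets of $\varphi(A)$ satisfying $\langle C\cup\varphi(A)\rangle=A$ by indecomposability. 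Mapping $x\in A_1$ to $x\in A$ and $x\in A_i$ to $(x,i)\in\varphi(A)\times C$ is then, by the identification of the first paragraph, an isomorphism $Q\cong\siq(A,t,C)$. The only genuine care needed here is that $C$ avoids the trivial coset, so that the hypotheses defining $\siq(A,t,C)$ are met; this is precisely the $j=1$ case of Lemma~\ref{lem8:subdir}.
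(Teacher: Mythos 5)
Your overall architecture is close to the paper's (mesh verification, socle-induced congruence $\mu$ concentrated on $A_1$, reduction of an arbitrary non-trivial congruence $\nu$ to a non-trivial congruence below $\pi$, then the submodule argument via Theorem~\ref{thm:congruences}), and the module-side reasoning is correct. But there is a genuine gap at the separation step, caused by misreading the hypothesis on~$C$: the elements of $C$ represent pairwise \emph{distinct} cosets of $\varphi(A)$, but one of them may represent the \emph{trivial} coset. There is no ``$j=1$ case of Lemma~\ref{lem8:subdir}'': that lemma's proof uses $A_j=\varphi(A)$ and so needs $j\neq 1$ (exactly as in Lemma~\ref{cor2:subdir}), and the conclusion $c_{i,1}\notin\varphi(A)$ is false in general --- the paper's own SI examples $\siq(\Z_{2^k},-1,\{0,1\})$ in Theorem~\ref{thm:sym_si} and $\siq(\Z_{p^\infty},1-p,\{\left[\frac 0{p}\right]_\sim\})$ in Example~\ref{exm:pp} have an element of $C$ inside $\varphi(A)$; in the latter $\varphi$ is onto, so the trivial coset is the \emph{only} coset and your normalized converse could not represent this quandle at all. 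So both directions are affected: the converse must allow exactly one constant $c_{i,1}\in\varphi(A)$, and the forward direction must handle the pairs this creates.

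The forward direction is where the gap bites hardest. Suppose $j\in C\cap\varphi(A)$, take $a\in A$ and $b=(\varphi(a)-j,\,j)$; a direct check against the four defining formulas shows $L_a=L_b$ on all of $\siq(A,t,C)$, so there is no $x$ whatsoever --- in $A_1$ or in any other orbit --- with $a\ast x\neq b\ast x$, and your claim that a collapsed cross-orbit pair yields a non-trivial pair of $\nu\cap\pi$ in a single multiplication step fails precisely for these pairs. (This is the situation the paper isolates as Case~4 of its proof, where it multiplies by $0_j$ for a suitable third orbit.) A repair staying within your framework: since $\nu$ is a congruence, $(\gamma(a),\gamma(b))\in\nu$ for every $\gamma\in\lmlt{Q}$, and by \eqref{auto} each such pair again satisfies $L_{\gamma(a)}=L_{\gamma(b)}$, which forces $\gamma(b)=(\varphi(\gamma(a))-j,\,j)$. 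The orbit of $a$ is all of $A$ and $\Ker(\varphi)\neq\{0\}$, so choose $\gamma,\gamma'$ with $\gamma(a)-\gamma'(a)\in\Ker(\varphi)\setminus\{0\}$; then $\gamma(b)=\gamma'(b)$, and transitivity of $\nu$ yields the non-trivial pair $(\gamma(a),\gamma'(a))\in\nu\cap\pi$ inside $A_1$, after which your submodule argument concludes as before. All remaining cross-orbit pairs are separated exactly as you say, because there the computation only needs distinctness of the cosets, not their non-triviality.
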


\begin{rem}\label{rem:transl}
We translate the construction of $\siq(A,t,C)$ into the language of meshes. To be consistent with the rest of the
section, we suppose that $1$ is a formal symbol not belonging to~$A$ and we put $I=C\cup \{1\}$.
Now
\begin{itemize}
\item $A_1=A$, $A_i=\varphi(A)\times\{i\}$, for each $i\in C$,
\item $\varphi_{1,1}=\varphi_{i,j}=\varphi$, $\varphi_{i,1}=1$ (i.e. the identity mapping), and $\varphi_{1,j}=\varphi^2$ for each $i,j\in C$,
\item $c_{i,1}=i$, $c_{1,i}=-\varphi(i)$, $c_{i,j}=\varphi(i-j)$ for each $i,j\in C$.
\end{itemize}
\end{rem}
\begin{proof}[Proof of Theorem \ref{thm1:subdir}]
It is easy to check that $\mathcal{A}=(A_i;\varphi_{i,j};c_{i,j})$ given in Remark \ref{rem:transl} is an indecomposable affine mesh
and that $\siq(A,t,C)$ is equal to the sum of~$\mathcal{A}$ over~$I$.

``$\Leftarrow$'' Let $Q$ be a SI quasi-reductive medial
quandle.
If~$Q$ is connected then $Q$ is an Alexander quandle $(A,t)$,
for some subdirectly irreducible $\Z[t,t^{-1}]$-module~$A$.
But then $Q=\siq(A,t,\emptyset)$.

If~$Q$ is not connected then
the proof that $Q$ has the form described in the theorem
follows from Theorem \ref{thm:simodule}, Lemmas \ref{cor2:subdir}, \ref{lem6:subdir}, \ref{lem7:subdir},
\ref{lem8:subdir}, Corollaries \ref{cor1:subdir},
and \ref{thm2:subdir}.

``$\Rightarrow$'' Now let $Q$ be the sum of the affine mesh described above.
Then, by assumption, $A_1=A$ is a subdirectly irreducible $\Z[t,t^{-1}]$-module.
Let $M$ be the socle of the module~$A_1$.
Since $\Ker(\varphi)$ is a non-zero submodule of~$A$, clearly $M\subseteq\Ker(\varphi)$.
By Example \ref{exm:congr}, the relation $\Upsilon\subseteq Q\times Q$
defined as follows:
\begin{equation*}
a\;\Upsilon\; b\;\; {\rm if \; and \; only \; if}\;\; a=b \;\; {\rm
or} \;\; (a,b\in A_1 \text{ and }a\equiv_M b)
\end{equation*}
is a congruence of the quandle $Q$.

To prove that $Q$ is subdirectly irreducible we will show that for any $a\neq b\in Q$ the congruence $\Theta(a,b)$ generated by
$a$ and $b$ contains the congruence $\Upsilon$.

It is obvious for $a,b\in A_1$. Now we will show that for $a$ or $b$ in $Q\setminus A_1$, the congruence $\Theta(a,b)|_{A_1}$ is non-trivial.
We will divide the proof into several cases.

Case 1. Let $a,b\in A_i=\varphi(A)$ for $1\neq i\in I$. It is easy to notice that for any $x\in A_1$
\begin{align*}
a\ast x=c_{i,1}+\varphi_{i,1}(a)+(1-\varphi_{1,1})(x)=c_{i,1}+a+(1-\varphi_{1,1})(x)\neq\\
c_{i,1}+b+(1-\varphi_{1,1})(x)=c_{i,1}+\varphi_{i,1}(b)+(1-\varphi_{1,1})(x)=b\ast x.
\end{align*}

Case 2. Let $a\in A_i=\varphi(A)$ and $b\in A_j=\varphi(A)$ for $i\neq j\in I\setminus\{1\}$.
Then there are $a_1,b_1\in A_1$ such that $a=\varphi(a_1)$ and $b=\varphi(b_1)$.
Furthermore, by the assumption, the constants $c_{i,1}$ and $c_{j,1}$ belong to different cosets of $\varphi(A)$, and hence we have that $c_{i,1}\notin c_{j,1}+\varphi(A)$. This implies that $c_{i,1}\neq c_{j,1}+\varphi(b_1)-\varphi(a_1)$.
Hence for any $x\in A_1$
\begin{align*}
a\ast x=c_{i,1}+\varphi_{i,1}(a)+(1-\varphi_{1,1})(x)=c_{i,1}+a+(1-\varphi_{1,1})(x)=c_{i,1}+\varphi(a_1)+(1-\varphi_{1,1})(x)\neq\\
c_{j,1}+\varphi(b_1)+(1-\varphi_{1,1})(x)=c_{j,1}+b+(1-\varphi_{1,1})(x)=c_{j,1}+\varphi_{j,1}(b)+(1-\varphi_{1,1})(x)=b\ast x.
\end{align*}

Case 3. Let $a\in A_1$, $b\in A_i=\varphi(A)$ for $1\neq i\in I$ and $c_{i,1}\notin\varphi(A)$.
Then there is $b_1\in A_1$ such that $b=\varphi(b_1)$ and $c_{i,1}\neq\varphi(a)-\varphi(b_1)$. In consequence, for any $x\in A_1$, we obtain
\begin{align*}
a\ast x=\varphi_{1,1}(a)+(1-\varphi_{1,1})(x)=\varphi(a)+(1-\varphi_{1,1})(x)\neq \\
c_{i,1}+\varphi(b_1)+(1-\varphi_{1,1})(x)=c_{i,1}+b+(1-\varphi_{1,1})(x)=\\
c_{i,1}+ \varphi_{i,1}(b)+(1-\varphi_{1,1})(x)=b\ast x.
\end{align*}

Case 4. Let $a\in A_1$, $b\in A_i=\varphi(A)$ for $1\neq i\in I$ and $c_{i,1}\in\varphi(A)$. Since, by assumption, the group $A$ is generated by the set $\varphi(A)\cup\{c_{i,1}\mid i\in I\}$, there is $j\in I$, such that $1\neq j\neq i$, with $c_{j,1}\notin\varphi(A)$. Then  $c_{j,1}\neq\varphi^2(a)-\varphi(c_{i,1}-c_{j,1})-\varphi(b)$. Hence
\begin{align*}
a\ast 0_j=c_{1,j}+\varphi_{1,j}(a)+(1-\varphi_{j,j})(0_j)=c_{1,j}+\varphi_{1,j}(a)=-c_{j,1}+\varphi^2(a)\neq\\
\varphi(c_{i,1}-c_{j,1})+\varphi(b)=c_{i,j}+\varphi_{i,j}(b)=c_{i,j}+\varphi_{i,j}(b)+(1-\varphi_{j,j})(0_j)=b\ast 0_j.
\end{align*}
Since $a\ast 0_j, b\ast 0_j\in A_j$, by Case 1 we have that for any $x\in A_1$, $(a\ast 0_j)\ast x\neq (b\ast 0_j)\ast x$.

Hence, by Cases 1--4, for any $a\neq b$ with $a$ or $b$ in $Q\setminus A_1$, $\Theta(a,b)|_{A_1}$ is indeed non-trivial, which shows that $Q$ is subdirectly irreducible.
\end{proof}

Summarizing, in the reductive case, we have that an SI strictly $m$-reductive medial quandle
has exactly one strictly $(m-1)$-reductive orbit and all other orbits  are strictly
$(m-2)$-reductive. As we have written in Section \ref{sec:si}, an Alexander quandle $(A,f)$ is $(m-1)$-reductive if and only if $(1-f)^{m-1}=0$ and $(m-1)$-reductive Alexander quandles are reducts
of modules over the ring $\Z[t,t^{-1}]/(1-t)^{m-1}$. Hence, in the case of $m$-reductive medial quandles, in Theorem \ref{thm1:subdir}, one can replace a SI $\Z[t,t^{-1}]$-module by its homomorphic image $\Z[t,t^{-1}]/(1-t)^{m-1}$.

\begin{corollary}\label{cor:SI_red}
 Let~$Q$ be a subdirectly irreducible $m$-reductive medial quandle. Then
 \begin{itemize}
  \item $|Q|=2$, if $m=1$;
  \item $Q$ is isomorphic to $\siq(A,t,C)$ where $A$ is a subdirectly irreducible $\Z[t,t^{-1}]/(1-t)^{m-1}$-module,
  if $m\geq 2$.
 \end{itemize}

\end{corollary}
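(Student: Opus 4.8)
The plan is to read the statement off from Theorem~\ref{thm1:subdir} together with the reductivity characterisation of Proposition~\ref{prop:reductive}, splitting on the value of $m$ and on whether $Q$ is a projection quandle; I expect no genuinely new computation to be required.

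First I would dispose of $m=1$. Here $Q$ is $1$-reductive, i.e. a right projection quandle, so every term operation of $Q$ is a projection onto one of its variables, and consequently every equivalence relation on the underlying set is a congruence. The congruence lattice is therefore the full partition lattice of the carrier, which possesses a least non-trivial element (a monolith) only when $|Q|\leq 2$. Since a subdirectly irreducible algebra is non-trivial and admits a monolith, this forces $|Q|=2$, exactly as recorded in the discussion preceding Theorem~\ref{thm:simodule}.

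Now suppose $m\geq 2$, so that $Q$ is reductive. If $Q$ happens to be a projection quandle then it is already $1$-reductive and the previous paragraph gives the degenerate case $|Q|=2$. Otherwise $Q$ is a non-projection reductive medial quandle, and Proposition~\ref{prop:theta} guarantees that $Q$ is quasi-reductive. Being subdirectly irreducible and quasi-reductive, $Q$ now falls squarely under Theorem~\ref{thm1:subdir}, which yields a subdirectly irreducible $\Z[t,t^{-1}]$-module $A$, the endomorphism $\varphi\colon a\mapsto (1-t)\cdot a$, and a set $C$ with $Q\cong\siq(A,t,C)$.

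It then remains only to upgrade the coefficient ring, and this is where I expect the only real (though mild) subtlety to lie. Reading the mesh of $\siq(A,t,C)$ off Remark~\ref{rem:transl}, the distinguished orbit $A_1$ equals $A$ and its diagonal homomorphism is $\varphi_{1,1}=\varphi=1-t$ (recall that $t$ acts on $A$ by $1-\varphi_{1,1}$, whence $\varphi_{1,1}=1-t$). Since $Q$ is $m$-reductive, Proposition~\ref{prop:reductive} forces $\varphi_{1,1}^{m-1}=0$, that is $(1-t)^{m-1}=0$ as an operator on $A$. Hence the $\Z[t,t^{-1}]$-action on $A$ factors through the quotient $\Z[t,t^{-1}]/(1-t)^{m-1}$; because it factors, the $\Z[t,t^{-1}]$-submodules of $A$ coincide verbatim with its $\Z[t,t^{-1}]/(1-t)^{m-1}$-submodules, so subdirect irreducibility is inherited and $A$ is a subdirectly irreducible $\Z[t,t^{-1}]/(1-t)^{m-1}$-module, as claimed. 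The one point of bookkeeping I would make explicit is the boundary between the two bullets: the two-element projection quandle is $m$-reductive for every $m$, and it is handled by the $m=1$ statement (being the $|Q|=2$ instance above), not by the $\siq$ construction, whose generation hypothesis $\langle C\cup\varphi(A)\rangle=A$ it fails. Apart from this, the corollary is a direct assembly of the cited results.
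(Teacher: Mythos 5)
Your proof is correct and matches the paper's intended derivation: the paper states this corollary without its own proof, as an immediate consequence of Proposition~\ref{prop:theta} (non-projection reductive implies quasi-reductive), Theorem~\ref{thm1:subdir}, and Proposition~\ref{prop:reductive} applied via Remark~\ref{rem:transl} to give $(1-t)^{m-1}=\varphi^{m-1}=0$ on $A$, which is exactly how you assemble it. Your explicit treatment of the $m=1$ case and your remark that the two-element projection quandle (which is $m$-reductive for all $m$) falls under the first bullet rather than the $\siq$ construction are both accurate and consistent with the paper's standing convention of excluding projection quandles from the mesh-based analysis.
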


In particular, each non-connected finite reductive medial quandle is a sum of an affine-mesh of the following form:
$$((A,\underbrace{\varphi(A),\dots,\varphi(A)}_{n\;\leq |A/\varphi(A)|});
\left(\begin{smallmatrix}\varphi&\varphi^2&\varphi^2&\ldots &\varphi^2& \\
1&\varphi&\varphi&\ldots &\varphi&\\
1&\varphi&\varphi&\ldots &\varphi&\\
\vdots&\vdots& \vdots& \vdots &\vdots &\\
1&\varphi&\varphi&\ldots &\varphi&
\end{smallmatrix}\right);
\left(\begin{smallmatrix}
0      &-\varphi(c_{2,1})       &\ldots & -\varphi(c_{j,1})       &\ldots &-\varphi(c_{n,1})& \\
c_{2,1}&0                       &\ldots & \varphi(c_{2,1}-c_{j,1})               &\ldots &\varphi(c_{2,1}-c_{n,1})& \\
\vdots &\vdots                  & \vdots& \vdots                  & \vdots&\vdots           & \\
c_{i,1}&\varphi(c_{i,1}-c_{2,1})&\ldots & \varphi(c_{i,1}-c_{j,1}) &\ldots &\varphi(c_{i,1}-c_{n,1})& \\
\vdots &\vdots                  & \vdots& \vdots                  &\vdots & \vdots          & \\
c_{n,1}&\varphi(c_{n,1}-c_{2,1})&\ldots &\varphi(c_{n,1}-c_{j,1}) &\ldots &0                &
\end{smallmatrix}\right)).$$

\begin{exm}\label{exm:sired}
Let $m\geq 1$, $p$ be a prime and
$C$ be a coset to $p\Z_{p^m}$ in $\Z_{p^m}$ containing at least one generator of~$\Z_{p^m}$.
Then $\siq(\Z_{p^m},1-p,C)$ is a finite SI $m$-reductive medial quandle with  $|C|+1$ orbits.
\end{exm}

But there are also infinite reductive SI medial quandles.

\begin{example}\label{exm:si3red}
Let $m\in \mathbb{N}$ and $A=\Z_{p^\infty}^m$ where $p$ is a prime. The block $\left[ \frac 0{p}\right]_\sim$ will be denoted by $0$.  Let  $t\cdot (a_1,a_2,\ldots,a_m)=(a_1,a_2-a_1,\ldots,a_m-a_{m-1})$. Then $A$ is a subdirectly irreducible $\Z[t,t^{-1}]$-module
 since the socle
is $\{(0,0,\ldots,0,\left[ \frac c{p}\right]_\sim)\mid \text{ for }c\in\Z_p\}$.

 Let now $\varphi$ be the multiplication by~$1-t$, i.e. $\varphi((a_1,a_2,\ldots,a_m))=(0,a_1,a_2,\ldots,a_{m-1})$. Clearly $\varphi^m=0$. If we take $C=\{(\left[\frac{1}{p^i}\right]_\sim,0,\ldots,0)\mid i\in\mathbb{N}\}$ then
 $\siq(A,t,C)$
 is an $m$-reductive medial quandle with infinitely many orbits.
\end{example}

\begin{exm}\label{exm:pp}
In a Pr\"ufer group $\Z_{p^\infty}$, for some prime $p$, the multiplication by~$p$ is not injective but
$p\Z_{p^\infty}=\Z_{p^\infty}$. Thus we obtain a family $\siq(\Z_{p^\infty},1-p,\{\left[ \frac 0{p}\right]_\sim\})$ of infinite non-connected
SI medial quandles with two isomorphic orbits.
\end{exm}

The last example is more complex. Not only the quandle is not reductive,
we also have $\Ker\varphi\not\subseteq \mathop{\im}\varphi$.

\begin{exm}\label{exm:pp1}
Let $A=\Z_{p^2}\times\Z_{p}[x]$ where $p$ is a prime and let  $t\cdot (a,b_0+b_1x+\ldots+b_mx^m)=(a-pb_0,b_0-b_1+(b_1-b_2)x+\ldots+(b_{m-1}-b_m)x^{m-1}+b_mx^m)$. Then $A$ is a subdirectly irreducible $\Z[t,t^{-1}]$-module with the socle $p\Z_{p^2}\times\{0\}$.

Let now $\varphi$ be the multiplication by~$1-t$, i.e.
\begin{align*}
\varphi((a,b_0+b_1x+\ldots+b_mx^m))=(pb_0,b_1+b_2x+\ldots+b_mx^{m-1}).
\end{align*}
Then $\Ker(\varphi)=\Z_{p^2}\times\{0\}$ and $\varphi(A)=p\Z_{p^2}\times\Z_{p}[x]=\varphi^2(A)$.
Suppose that $C\subseteq\Z_{p^2}$ is a transversal to $p\Z_{p^2}$ in~$\Z_{p^2}$ containing at least one element coprime to~$p$.
Then $\siq(A,t,C\times\{0\})$
is an infinite SI quasi-reductive, but non-reductive, medial quandle with  $|C|+1$ orbits.
\end{exm}

\section{Isomorphisms of subdirectly irreducible medial quandles}\label{sec:isomorphism}

Two subdirectly irreducible connected medial quandles are isomorphic if and only if the underlying modules
are isomorphic.
It remains to decide which quandles constructed in
Theorem \ref{thm1:subdir} are isomorphic.
Let us start with homologous affine meshes introduced in \cite[Definition 4.1]{JPSZ}.
\begin{de}
We call two affine meshes $\mathcal A=(A_i;\varphi_{i,j};c_{i,j})$
and $\mathcal A'=(A_i';\varphi_{i,j}';c_{i,j}')$, over the same
index set $I$, \emph{homologous}, if there is a bijection $\sigma$ of
the set $I$, group isomorphisms $\psi_i:A_i\to A_{\sigma i}'$, and
constants $d_i\in A_{\sigma i}'$, such that, for every $i,j\in I$,
\begin{enumerate}
    \item[(H1)] $\psi_j\varphi_{i,j}=\varphi_{\sigma i,\sigma j}'\psi_i$, i.e., the following diagram commutes:
$$ \begin{CD}
A_i @>\varphi_{i,j}>> A_j\\ @VV\psi_iV @VV\psi_jV\\
A_{\sigma i}' @>\varphi_{\sigma i,\sigma j}'>> A_{\sigma j}'
\end{CD}$$
    \item[(H2)] $\psi_j(c_{i,j})=c_{\sigma i,\sigma j}'+\varphi_{\sigma i,\sigma j}'(d_i)-\varphi_{\sigma j,\sigma j}'(d_j)$.
\end{enumerate}
\end{de}

\begin{theorem}\cite[Theorem 4.2]{JPSZ}\label{thm:isomorphism}
Let $\mathcal A=(A_i;\varphi_{i,j};c_{i,j})$ and $\mathcal
A'=(A_i';\varphi_{i,j}';c_{i,j}')$ be two indecomposable affine
meshes, over the same index set $I$. Then the sums of $\A$ and $\A'$
are isomorphic quandles if and only if the meshes $\A$ and $\A'$ are
homologous.
\end{theorem}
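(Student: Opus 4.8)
The statement is an ``if and only if'' characterizing when two indecomposable meshes over a common index set $I$ have isomorphic sums. Write $Q=\sum\A$ and $Q'=\sum\A'$. I would begin by recalling from Theorem~\ref{thm:decomposition} that the orbits of $Q$ (resp.\ $Q'$) are exactly the summands $A_i$ (resp.\ $A_i'$), each carrying the Alexander-quandle structure in which, for $a,b\in A_i$, $a*b=\varphi_{i,i}(a)+(1-\varphi_{i,i})(b)$, and that by (M1) the map $1-\varphi_{i,i}$ is an automorphism.

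For the direction assuming $\A$ and $\A'$ homologous with data $(\sigma,\psi_i,d_i)$, I would define $F\colon Q\to Q'$ by $F(a)=\psi_i(a)+d_i\in A_{\sigma i}'$ for $a\in A_i$. This is a bijection, and one checks $F(a*b)=F(a)*F(b)$ for $a\in A_i$, $b\in A_j$ by substituting into the sum formula and applying (H1) (for the pair $i,j$, and in its diagonal form for $j,j$) together with (H2); the translation terms contributed by $d_i$ and $d_j$ cancel precisely because of (H2). Since $F$ preserves $*$ and is bijective, and $\backslash$ is determined by $*$ as its left division, $F$ is a quandle isomorphism. This is routine but bookkeeping-heavy.

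For the converse, let $F\colon Q\to Q'$ be a quandle isomorphism. Since $\dis Q$ is generated by the maps $L_aL_b^{-1}$ and $FL_aF^{-1}=L_{F(a)}$ (the isomorphism analogue of \eqref{auto}), conjugation by $F$ carries $\dis Q$ onto $\dis Q'$ and sends orbits to orbits; by Theorem~\ref{thm:decomposition} this gives a bijection $\sigma\colon I\to I$ with $F(A_i)=A_{\sigma i}'$. The heart of the proof is to show each restriction $F|_{A_i}$ is \emph{affine}. Here I would invoke the orbit-module description with origin $e=0_{A_i}$: the group $A_i$ coincides with the orbit module $\orb Q{0_{A_i}}$, because $\dis Q$ acts on the orbit $A_i$ transitively and by translations (each $L_a$ restricts to $A_i$ as an affine map with linear part $1-\varphi_{i,i}$, so every element of $\dis Q$, a product of $L$'s with exponent sum $0$, restricts to a pure translation), whence $\alpha(e)+\beta(e)=\alpha\beta(e)$ reproduces the ambient addition. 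Conjugation by $F$ then intertwines these module operations, and via \eqref{auto} the $t$-action, so $F|_{A_i}$ is a module isomorphism $\orb Q{0_{A_i}}\to\orb{Q'}{F(0_{A_i})}$. Translating the target back to its own origin gives $F|_{A_i}(a)=\psi_i(a)+d_i$ with $d_i=F(0_{A_i})$ and $\psi_i\colon A_i\to A_{\sigma i}'$ a $\Z[t,t^{-1}]$-module isomorphism; commutation with $t$ is exactly the diagonal instance $\psi_i\varphi_{i,i}=\varphi_{\sigma i,\sigma i}'\psi_i$ of (H1).

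With the affine form secured, the remaining conditions fall out by comparing coefficients. Expanding $F(a*b)=F(a)*F(b)$ for $a\in A_i$, $b\in A_j$ using $F|_{A_i}=\psi_i+d_i$ and the diagonal (H1) on the $j,j$ block, the part linear in $a$ forces $\psi_j\varphi_{i,j}=\varphi_{\sigma i,\sigma j}'\psi_i$, the full (H1), while the $b$-linear parts agree automatically; setting $a=0_{A_i}$ and $b=0_{A_j}$ isolates the constants and yields $\psi_j(c_{i,j})=c_{\sigma i,\sigma j}'+\varphi_{\sigma i,\sigma j}'(d_i)-\varphi_{\sigma j,\sigma j}'(d_j)$, which is (H2). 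Thus $(\sigma,\psi_i,d_i)$ witnesses homology. The one genuinely delicate step, and the main obstacle, is the affineness claim: one must be certain the abelian-group structure of each summand is recovered \emph{intrinsically} up to the translation coming from the choice of origin, which is precisely what the orbit-module construction together with the transitive translation action of $\dis Q$ supplies; everything afterward is coefficient matching.
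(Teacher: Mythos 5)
Your proposal is correct and takes essentially the same route as the original argument in \cite[Theorem 4.2]{JPSZ} (the present paper only cites the result without reproving it): the map $a\mapsto\psi_i(a)+d_i$ handles the ``if'' direction, and for the converse you correctly identify the one delicate point---that $F$ restricted to each orbit is affine---and settle it the same way, by noting that $\dis Q$ acts on each orbit transitively by pure translations (linear parts $(1-\varphi_{i,i})^{\sum k_i}=1$), so the orbit-module structure recovers the ambient group addition up to choice of origin, after which (H1) and (H2) follow by coefficient matching exactly as you describe.
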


Let $\mathcal{A}=(A_i;\varphi_{i,j};c_{i,j})$ be an indecomposable affine mesh
described in Remark \ref{rem:transl}. In particular, the orbit $A_1=A$ is an Alexander quandle which originates from the abelian group $(A,+)$ of the given $\Z[t,t^{-1}]$-module. We have the following sequence of lemmas.

\begin{lm}\label{lm11:subdir}
For each $i\in I$, let
$c'_{i,1}\in A$ be such that $c'_{i,1}\in
c_{i,1}+\varphi(A)$. Then the sum of $\mathcal{A}$ is
isomorphic to the sum of the indecomposable affine mesh $\mathcal{A}'=(A_i;\varphi_{i,j};c'_{i,j})$.
\end{lm}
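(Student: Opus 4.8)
The plan is to invoke the isomorphism criterion of Theorem~\ref{thm:isomorphism}: the sums of two indecomposable affine meshes over the same index set are isomorphic quandles precisely when the meshes are homologous. Since $\mathcal{A}$ and $\mathcal{A}'$ share the very same groups $A_i$ and the very same homomorphisms $\varphi_{i,j}$, differing only in their constants, the obvious candidate for a homology is $\sigma=\mathrm{id}_I$ together with $\psi_i=1$ for every $i\in I$. With these choices condition (H1) reads $\varphi_{i,j}=\varphi_{i,j}$ and is automatic, so the entire problem collapses to producing constants $d_i\in A_i$ for which (H2) holds, i.e.\ $c_{i,j}=c'_{i,j}+\varphi_{i,j}(d_i)-\varphi_{j,j}(d_j)$ for all $i,j$. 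Before doing so I would record that $\mathcal{A}'$ is again indecomposable: as $c'_{i,1}$ lies in the same $\varphi(A)$-coset as $c_{i,1}$, the subgroup $\langle\varphi(A)\cup\{c'_{i,1}\}\rangle$ equals $\langle\varphi(A)\cup\{c_{i,1}\}\rangle=A$, and the higher constants $c'_{i,j}$ are tied to the first column by the same formulas of Remark~\ref{rem:transl}.

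The key is the explicit choice of the $d_i$. I would set $d_1=0$ and, for each $i\in C$, $d_i=c_{i,1}-c'_{i,1}$. The coset hypothesis $c'_{i,1}\in c_{i,1}+\varphi(A)$ is exactly what guarantees $d_i\in\varphi(A)=A_i$, so the $d_i$ live in the correct groups. This choice is in fact forced by the $j=1$ instance of (H2): since $\varphi_{i,1}=1$ and $\varphi_{1,1}=\varphi$, that instance reads $c_{i,1}=c'_{i,1}+d_i-\varphi(d_1)$, which with $d_1=0$ gives $d_i=c_{i,1}-c'_{i,1}$.

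It then remains to check (H2) in the remaining families of index pairs, and here Lemma~\ref{lem6:subdir} does the heavy lifting by expressing every constant through the first column: $c_{1,j}=-\varphi(c_{j,1})$ and $c_{i,j}=\varphi(c_{i,1}-c_{j,1})$ for $i,j\in C$, and likewise for the primed mesh. For $i=1$, $j\in C$ one computes $c_{1,j}-c'_{1,j}=-\varphi(c_{j,1}-c'_{j,1})=-\varphi(d_j)=\varphi_{1,j}(d_1)-\varphi_{j,j}(d_j)$; for $i,j\in C$ one gets $c_{i,j}-c'_{i,j}=\varphi\bigl((c_{i,1}-c'_{i,1})-(c_{j,1}-c'_{j,1})\bigr)=\varphi(d_i)-\varphi(d_j)=\varphi_{i,j}(d_i)-\varphi_{j,j}(d_j)$; and the diagonal cases $i=j$ are trivial since all constants vanish there. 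The computations are entirely routine, each reducing to the additivity of $\varphi$, so the only point that genuinely requires the hypothesis is the membership $d_i\in\varphi(A)$; the rest is bookkeeping across the four index-type cases. Thus $(\sigma,\psi_i,d_i)$ is a homology and Theorem~\ref{thm:isomorphism} yields the desired isomorphism.
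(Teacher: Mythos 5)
Your proposal is correct and follows essentially the same route as the paper: the paper's proof invokes Theorem~\ref{thm:isomorphism} with exactly the same homology data $\sigma=\mathrm{id}$, $\psi_i=\mathrm{id}$, $d_1=0$ and $d_i=c_{i,1}-c'_{i,1}\in\varphi(A)=A_i$. If anything you are more thorough, since the paper records the verification of (H2) only for the constants $c_{i,1}$ (leaving the cases $c_{1,j}$ and $c_{i,j}$, which follow from Lemma~\ref{lem6:subdir} exactly as you compute, implicit).
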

\begin{proof}
Let $\sigma=id$, $\psi_1=id$, $d_1=0$ and for every $1\neq i\in I$,
$\psi_i=id$ and $d_i=c_{i,1}-c'_{i,1}\in\varphi(A)=A_i$.
Hence, condition (H1) is satisfied trivially. Moreover,
\begin{align*}
c_{i,1}=c'_{i,1}+d_i-0=c'_{i,1}
+\varphi_{i,1}(d_i)-\varphi_{1,1}(0),
\end{align*}
which shows that the condition (H2) is also satisfied.
\end{proof}

\begin{lm}\label{lm10:subdir}
Let $A/\varphi(A)$ be a cyclic group and $\kappa=|A/\varphi(A)|>1$.
Then
\begin{itemize}
 \item there is exactly one, up to
isomorphism, SI quasi-reductive medial quandle
with two orbits;
\item there is exactly one SI quasi-reductive medial
quandle with three orbits, such that $c_{3,1}\in \varphi(A)$;
\item if $\kappa<\omega$ then there is exactly one SI quasi-reductive medial quandle with $\kappa+1$ orbits.
\end{itemize}
\end{lm}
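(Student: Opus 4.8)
The plan is to turn the classification up to isomorphism into the combinatorics of cosets in $A/\varphi(A)$ by means of homologous meshes (Theorem~\ref{thm:isomorphism}), and then to show that the cyclic quotient $A/\varphi(A)$ has enough module automorphisms for the count to collapse to a single isomorphism class in each of the three situations. Throughout I fix the module $A$ and the mesh of Remark~\ref{rem:transl}: a non-connected SI quasi-reductive quandle built from $A$ is $\siq(A,t,C)$ for a subset $C$ of a transversal to $\varphi(A)$ with $C\cup\varphi(A)$ generating $A$, and it has $|C|+1$ orbits.

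First I would describe what a homology $(\sigma,(\psi_i),(d_i))$ between two meshes of this shape must look like. Since $A_1$ is the unique orbit carrying the non-trivial block of the monolith (as recorded just before Lemma~\ref{lem4:subdir}), any isomorphism preserves it, so $\sigma(1)=1$. Condition (H1) at $(1,1)$ forces $\psi_1\varphi=\varphi\psi_1$, i.e.\ $\psi_1$ is a $\Z[t,t^{-1}]$-module automorphism of $A$; (H1) at $(i,1)$, where $\varphi_{i,1}=1$, forces $\psi_i=\psi_1|_{\varphi(A)}$; and the remaining (H1) equations then hold automatically, because all the other mesh maps are powers of $\varphi$ and $\psi_1$ commutes with $\varphi$. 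Using Lemma~\ref{lm11:subdir} to normalise the constants inside their cosets (equivalently, choosing the $d_i$), condition (H2) reduces to the single requirement that the automorphism $\bar\psi$ of $A/\varphi(A)$ induced by $\psi_1$ carry the set of cosets $\{\,\overline{c}:c\in C\,\}$ bijectively onto $\{\,\overline{c}:c\in C'\,\}$. Hence the isomorphism type of $\siq(A,t,C)$ depends only on the subset $\overline{C}\subseteq A/\varphi(A)$, up to the action of the image of $\mathrm{Aut}_{\Z[t,t^{-1}]}(A)$ in $\mathrm{Aut}(A/\varphi(A))$.

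The key step is to prove that this image is all of $\mathrm{Aut}(A/\varphi(A))$. Here I would use that $A$ is subdirectly irreducible with finite simple socle $R/\mathfrak m$ (the remark after Theorem~\ref{thm:simodule}); the residue field at the maximal ideal $\mathfrak m$ of $\Z[t,t^{-1}]$ is finite of some prime characteristic $p$, so every element of $A$ is annihilated by a power of $p$ and $A/\varphi(A)\cong\Z_{p^s}$ is a finite cyclic $p$-group (a degenerate infinite cyclic quotient, if it arose, would be handled by $\pm\mathrm{id}$). Given a unit $u\in(\Z/\kappa)^{\times}$, any integer representative $v$ of $u$ is coprime to $p$, so multiplication by $v$ is a bijective module endomorphism of the $p$-primary module $A$ whose induced map on $A/\varphi(A)$ is multiplication by $u$. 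Thus the image of $\mathrm{Aut}_{\Z[t,t^{-1}]}(A)$ equals $(\Z/\kappa)^{\times}$ and acts (simply) transitively on the generators of the cyclic group $A/\varphi(A)$.

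With this in hand the three bullets follow. For $\kappa+1$ orbits one has $|C|=\kappa$, so by Lemma~\ref{lem8:subdir} the cosets $\overline{c}$, $c\in C$, are pairwise distinct and hence exhaust $A/\varphi(A)$; for any two such $C,C'$ the identity automorphism already matches the full coset sets, giving a unique isomorphism type, and existence follows by choosing any full transversal (this is where $\kappa<\omega$ is used). For two orbits, $\overline{C}=\{\overline{c}\}$ with $\overline{c}$ a generator forced by the generation condition, and transitivity on generators shows all choices give isomorphic quandles, while existence holds since the non-trivial cyclic group $A/\varphi(A)$ has a generator. For three orbits with $c_{3,1}\in\varphi(A)$, Lemma~\ref{lm11:subdir} normalises $c_{3,1}=0$, so $\overline{C}=\{\overline{c_{2,1}},0\}$ with $\overline{c_{2,1}}$ a generator; since $\bar\psi$ fixes $0$, transitivity on generators again yields a unique type. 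The main obstacle is precisely this surjectivity of $\mathrm{Aut}_{\Z[t,t^{-1}]}(A)\to\mathrm{Aut}(A/\varphi(A))$, i.e.\ the lifting of every automorphism of the cyclic quotient to a module automorphism; it rests on the structural fact that $A$ is a $p$-primary subdirectly irreducible module, after which integer multiplication supplies the lift and the rest is bookkeeping with (H1)--(H2).
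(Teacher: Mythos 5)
Your proposal is correct and follows essentially the paper's own route: reduce isomorphism of the quandles to homologous meshes via Theorem~\ref{thm:isomorphism}, normalize the constants within their $\varphi(A)$-cosets using Lemma~\ref{lm11:subdir}, and then move generator cosets of the cyclic quotient around by automorphisms of $A$. The only substantive difference is that you explicitly justify the key transitivity step---lifting every automorphism of the cyclic group $A/\varphi(A)$ to a $\Z[t,t^{-1}]$-module automorphism of $A$, via the $p$-primary structure of SI $\Z[t,t^{-1}]$-modules and multiplication by integers coprime to $p$---whereas the paper merely asserts the existence of a suitable isomorphism $\psi\colon A\to A$ sending one generator representative to another.
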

\begin{proof}
Since $A$ is generated by the set
$\varphi(A)\cup\{c_{i,1}\mid i\in I\}$, at least for one constant $c\in \{c_{i,1}\mid i\in I\}$ a coset $c+\varphi(A)$ must
be a generator of the group $A/\varphi(A)$. Hence, if $Q$ has only two
orbits, the constant $c=c_{2,1}\notin \varphi(A)$ and $c+\varphi(A)$ is one of the generators of $A/\varphi(A)$.
Therefore, there exists an isomorphism $\psi:A\to A$ such that, for
any other $d\notin \varphi(A)$, where $d+\varphi(A)$ is a generator of $A/\varphi(A)$, we have
$\psi(c)=d$. Hence, for $\sigma=id$, $\psi_1=\psi_2=\psi$ and
$d_1=d_2=0$, the conditions  (H1) and (H2) are satisfied for affine
meshes:
$((A,\varphi(A));\,\left(\begin{smallmatrix}\varphi&\varphi^2\\1&\varphi\end{smallmatrix}\right);
\,\left(\begin{smallmatrix}0&-\varphi(c)\\c&0\end{smallmatrix}\right))$
and
$((A,\varphi(A));\,\left(\begin{smallmatrix}\varphi&\varphi^2\\1&\varphi\end{smallmatrix}\right);
\,\left(\begin{smallmatrix}0&-\varphi(d)\\d&0\end{smallmatrix}\right))$.
In consequence, they are isomorphic.

The same arguments works in the case of three orbits with
$c_{3,1}=0$. So by Lemma \ref{lm11:subdir} all SI quasi-reductive medial quandles with 3 orbits where
$c_{3,1}\in \varphi(A)$ are isomorphic.

Finally, in the case of $\kappa+1$ orbits, the required condition that,
for each $i\neq j\in I\setminus\{1\}$, $c_{i,1}\notin c_{j,1}+\varphi(A)$,
implies (by Lemma \ref{lm11:subdir}) that there is only one way for choosing constants in
$A$. So, the statement is obvious.
\end{proof}

\begin{lm}\label{lm12:subdir}
Suppose that for some $1\neq i\in I$, $c_{i,1}=0$. Then the sum of $\mathcal{A}$ is not isomorphic to
the sum of the indecomposable affine mesh $\mathcal{A}'=(A_i;\varphi_{i,j};c'_{i,j})$ with
$c'_{i,1}\notin\varphi(A)$ for each $1\neq i\in I$.
\end{lm}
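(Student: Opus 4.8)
The plan is to argue by contradiction using the characterization of isomorphic sums via homologous meshes (Theorem~\ref{thm:isomorphism}). Since $\mathcal{A}$ and $\mathcal{A}'$ share the same groups $A_k$ and the same homomorphisms $\varphi_{i,j}$ (only the constants differ), any homology between them consists of a bijection $\sigma$ of $I$, group isomorphisms $\psi_k\colon A_k\to A_{\sigma k}$, and constants $d_k\in A_{\sigma k}$ satisfying (H1) and (H2), where in (H1)--(H2) we may use $\varphi_{i,j}$ throughout. Assume such a homology exists.

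First I would show that $\sigma$ fixes the distinguished orbit, $\sigma(1)=1$. The idea is that $\bigcap_{j\in I}\Ker(\varphi_{i,j})$ is a homology invariant of each orbit: condition (H1) gives $\varphi_{\sigma i,\sigma j}\psi_i(x)=\psi_j\varphi_{i,j}(x)$, so $\psi_i(\Ker(\varphi_{i,j}))=\Ker(\varphi_{\sigma i,\sigma j})$ for every $j$, and intersecting over $j$ (reindexing $j'=\sigma j$, which ranges over all of $I$) yields $\psi_i\bigl(\bigcap_{j}\Ker(\varphi_{i,j})\bigr)=\bigcap_{j'}\Ker(\varphi_{\sigma i,j'})$. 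From the mesh of Remark~\ref{rem:transl} this intersection is easily computed: for $i=1$ one has $\varphi_{1,1}=\varphi$ and $\varphi_{1,j}=\varphi^2$ for $j\neq 1$, so the intersection equals $\Ker(\varphi)\neq\{0\}$; for $i\neq 1$ one has $\varphi_{i,1}=1$, whose kernel is trivial, so the intersection is $\{0\}$. Hence $\bigcap_{j}\Ker(\varphi_{\sigma i,j})$ is non-trivial exactly when $i=1$, which forces $\sigma(1)=1$. (This is the same invariant that singles out $A_1$ as the unique orbit on which $\theta=\pi\cap\lambda$ is non-trivial.)

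With $\sigma(1)=1$, I would instantiate (H2) at the index $i$ with $c_{i,1}=0$ and at $j=1$. Since $\sigma(i)\neq 1$ we have $\varphi_{\sigma i,1}=1$ and $\varphi_{1,1}=\varphi$, so (H2) reads $\psi_1(c_{i,1})=c'_{\sigma i,1}+\varphi_{\sigma i,1}(d_i)-\varphi_{1,1}(d_1)=c'_{\sigma i,1}+d_i-\varphi(d_1)$. The left-hand side equals $\psi_1(0)=0$ because $\psi_1$ is a group homomorphism, whence $c'_{\sigma i,1}=\varphi(d_1)-d_i$. But $d_i\in A_{\sigma i}=\varphi(A)$ and $\varphi(d_1)\in\varphi(A)$, so $c'_{\sigma i,1}\in\varphi(A)$, contradicting the hypothesis that $c'_{k,1}\notin\varphi(A)$ for every $1\neq k\in I$. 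Thus no homology exists and the two sums are not isomorphic. The main obstacle is the middle step, establishing $\sigma(1)=1$, i.e. that a homology must preserve the orbit $A_1$; once the invariant $\bigcap_{j}\Ker(\varphi_{i,j})$ pins this down, the concluding computation with (H2) is routine.
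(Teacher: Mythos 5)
Your proof is correct and follows essentially the same route as the paper: the paper's argument is precisely your concluding (H2) computation --- $\psi(c_{i,1})=\psi(0)=0$ lies in $\varphi(A)$, while the right-hand side $c'_{\sigma i,1}+d_i-\varphi(d_1)$ lies in the coset $c'_{\sigma i,1}+\varphi(A)\neq\varphi(A)$, so (H2) fails. The only difference is that the paper tacitly assumes the bijection $\sigma$ fixes the index $1$ (it never mentions $\sigma$ at all), whereas you justify $\sigma(1)=1$ explicitly via the homology invariant $\bigcap_{j\in I}\Ker(\varphi_{i,j})$, which is non-trivial exactly for $i=1$ (since $\varphi$ is non-injective and $\varphi_{i,1}=1$ for $i\neq 1$) --- a worthwhile gain in rigor, though not a different method.
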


\begin{proof}
Let $c_{i,1}=0$ for some $1\neq i\in I$. Since $0\in
\varphi(A)$ and $c'_{i,j}+\varphi(A)\neq
\varphi(A)$, for any isomorphism $\psi:A\to A$, every
$d_1\in A$ and $d_i\in \varphi(A)$,  we have
\begin{align*}
\psi(c_{i,1})=0\neq
c'_{i,j}+\varphi_{i,1}(d_i)-\varphi_{1,1}(d_1)=c'_{i,j}+d_i-\varphi(d_1)\in
c'_{i,j}+\varphi(A).
\end{align*}
This means that the condition (H2) fails for any isomorphism
$\psi:A\to A$.
\end{proof}

By Lemmas \ref{lm11:subdir} and \ref{lm12:subdir} we immediately
obtain

\begin{cor}
Let $c_{i,1}\in\varphi(A)$ for some $1\neq i\in I$. Then the sum
of $\mathcal{A}$ is not isomorphic to the sum of the indecomposable affine mesh
$\mathcal{A}'=(A_i;\varphi_{i,j};c'_{i,j})$ with $c'_{i,1}\notin\varphi(A)$
for each $1\neq i\in I$.
\end{cor}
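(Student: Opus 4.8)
The plan is to combine the two preceding lemmas. Lemma~\ref{lm11:subdir} allows me to move each constant $c_{i,1}$ freely within its coset modulo $\varphi(A)$ while preserving the isomorphism type of the sum, and Lemma~\ref{lm12:subdir} already rules out isomorphism once one of the constants $c_{i,1}$ equals~$0$. So the strategy is first to normalize the given mesh $\mathcal A$ so that it falls under the hypothesis of Lemma~\ref{lm12:subdir}.

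First I would fix the index $1\neq i\in I$ with $c_{i,1}\in\varphi(A)$ and note that this forces $0\in c_{i,1}+\varphi(A)$, since $\varphi(A)$ is a subgroup of the abelian group $A$ and hence $-c_{i,1}\in\varphi(A)$. Applying Lemma~\ref{lm11:subdir} with the new constant $c''_{i,1}:=0$ (and leaving the remaining constants in their cosets) then yields an indecomposable affine mesh $\mathcal A''=(A_i;\varphi_{i,j};c''_{i,j})$ whose sum is isomorphic to the sum of $\mathcal A$ and which satisfies $c''_{i,1}=0$.

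Next I would apply Lemma~\ref{lm12:subdir} to $\mathcal A''$. Since $c''_{i,1}=0$ for some $1\neq i\in I$, the sum of $\mathcal A''$ is not isomorphic to the sum of $\mathcal A'$, because by hypothesis the constants of $\mathcal A'$ satisfy $c'_{i,1}\notin\varphi(A)$ for every $1\neq i\in I$. Chaining the isomorphism between the sums of $\mathcal A$ and $\mathcal A''$ with this non-isomorphism gives that the sum of $\mathcal A$ is not isomorphic to the sum of $\mathcal A'$, as required.

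I expect no genuine obstacle here: the corollary is an immediate consequence of the two lemmas, exactly as the phrase ``we immediately obtain'' suggests. The only point needing (minimal) care is verifying $0\in c_{i,1}+\varphi(A)$ so that Lemma~\ref{lm11:subdir} can legitimately be invoked, and this is clear from $c_{i,1}\in\varphi(A)$ together with the group structure of $\varphi(A)$.
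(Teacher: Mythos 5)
Your proposal is correct and matches the paper's intended argument exactly: the paper derives the corollary from Lemmas \ref{lm11:subdir} and \ref{lm12:subdir} in precisely this way, using Lemma \ref{lm11:subdir} to normalize $c_{i,1}$ to $0$ within its coset $c_{i,1}+\varphi(A)=\varphi(A)$ and then invoking Lemma \ref{lm12:subdir}. Your explicit check that $0\in c_{i,1}+\varphi(A)$ is the only detail the paper leaves unstated, and you handle it correctly.
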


In the sequel, the group endomorphism $\mathbb{Z}_n\rightarrow \mathbb{Z}_n$ defined by $x\mapsto ax$ for $a\in\mathbb{Z}_n$ will be denoted by $a$.

\begin{exm}
Let $\varphi=\left(\begin{smallmatrix}0&0\\1&0\end{smallmatrix}\right):
\Z_2^2\to \Z_2^2$ and
$c=\left(\begin{smallmatrix}1\\0\end{smallmatrix}\right)$.

It was shown in \cite[Table 3]{JPSZ}, that up to isomorphism, there are
exactly two reductive, but not 2-reductive medial quandles of size
6:
$$((\Z_{2^{2}},2\Z_{2^{2}});\,\left(\begin{smallmatrix}2&0\\1&2\end{smallmatrix}\right);
\,\left(\begin{smallmatrix}0&-2\\1&0\end{smallmatrix}\right))\;\; {\rm and}\;\;
((\Z_2^2, \varphi(\Z_2^2);\,\left(\begin{smallmatrix}\varphi&0\\1&\varphi\end{smallmatrix}\right);
\,\left(\begin{smallmatrix}0&-\varphi(c)\\c&0\end{smallmatrix}\right)).$$
They can be actually written as $\siq(\Z_4,3,\{1\})$ and $\siq(\Z_2^2,\left(\begin{smallmatrix}1 & 0 \\1 & 1\end{smallmatrix}\right),\{c\})$
and by  Theorem \ref{thm1:subdir} both of them are subdirectly irreducible.

Further, there are nine reductive, but not 2-reductive medial
quandles of size 8. Only two of them are subdirectly irreducible:
$\siq(\Z_4,3,\{0,1\})$ and $\siq(\Z_2^2,\left(\begin{smallmatrix}1 & 0 \\1 & 1\end{smallmatrix}\right),\{0,c\})$.
Note that both
are strictly 3-reductive.
\end{exm}

Let us recall that for an abelian group $A$ and its automorphism $t\in Aut(A,+)$,
$A$ may be considered as a $\Z[t,t^{-1}]$-module by setting
\begin{align*}
t\cdot a=t(a),\qquad\text{for all } a\in A.
\end{align*}

Hence, it is clear that each congruence of
a cyclic group $A$ is a congruence of the $\Z[t,t^{-1}]$-module~$A$.
Consequently, a $\Z[t,t^{-1}]$-module~$A$ with the underlying group
cyclic is subdirectly irreducible if and only if $A$ is subdirectly irreducible as an abelian
group. The only cyclic SI groups are
 groups $\Z_{p^s}$ of order $p^s$, for some prime
number~$p$.

\begin{exm}\label{ex:lat}
The Alexander quandle $(\Z_{p^r},k)$ where $r>0$, $p$ is a prime
and $k$ coprime to~$p$ is an SI latin quandle.
\end{exm}

The only non-zero nilpotent endomorphisms of the group $\Z_{p^s}$
are of the form $\varphi=p^ka$, for some $0<k<s$ and $a$ coprime with~$p$, and the Alexander quandles
$(\Z_{p^s},1-p^ka)$ are strictly $(\left\lceil \frac{s}{k} \right\rceil +1)$-reductive.

Clearly, each pair of endomorphisms of a cyclic group conjugates if
and only if they are equal. Hence, by Theorem \ref{thm:isomorphism},
for a group $\Z_{p^s}$ and two different nilpotent endomorphisms of
$\Z_{p^s}$ we always obtain non-isomorphic quandles. So, in non-isomorphic sums of affine meshes with cyclic orbits,
constants must play a crucial role.

Now we give the characterization of non-isomorphic SI finite reductive medial quandles with each orbit cyclic.
\begin{thm}\label{thm2:cyclic}
Let $n\geq 1$, $I=\{1,2,\ldots, n,n+1\}$ and $K=\{2,\ldots, n,n+1\}$. Let $\mathcal{A}=(A_i;\,\varphi_{i,j};\,c_{i,j})$ and $\mathcal{A}'=(A_i;\,\varphi_{i,j};\,c'_{i,j})$ be two
indecomposable affine meshes over $I$ described in Remark \ref{rem:transl} with $A=\Z_{p^s}$, for some prime power $p^s$, and
$\varphi=p^ka$, for some $0<k<s$ and $a$ coprime with~$p$.
Assume that $c_{i,1}, c'_{i,1}\notin\varphi(A)$, for each
$i\in K$, or there is (exactly one) $i\in K$ such that
$c_{i,1}, c'_{i,1}\in\varphi(A)$.
Then the sums $\mathcal{A}$ and $\mathcal{A}'$ are isomorphic if and
only if $n\leq \left\lceil \frac{s}{k} \right\rceil$ or there is a permutation $\sigma$ of the set $K$
such that,
for any $i,j\in K$, the constants
satisfy the following condition:
\begin{align}
c_{i,1}c'_{\sigma(j),1}-c_{j,1}c'_{\sigma(i),1}=0.
\end{align}
\end{thm}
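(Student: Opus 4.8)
The plan is to invoke the isomorphism criterion of Theorem~\ref{thm:isomorphism}: the sums of $\mathcal A$ and $\mathcal A'$ are isomorphic exactly when the two meshes are homologous. So the whole argument reduces to deciding when a homology $(\sigma,(\psi_i),(d_i))$ between the two meshes of Remark~\ref{rem:transl} exists, and then translating that existence into the stated numerical condition.

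First I would pin down the homology data. Since the orbit $A_1=A$ has $p^s$ elements while every other orbit $A_i=\varphi(A)$ has $p^{s-k}<p^s$ elements, any bijection $\sigma$ coming from a homology must fix $1$, hence restricts to a permutation of $K$. As $A_1$ is cyclic, $\psi_1$ is multiplication by a unit $u\in\Z_{p^s}^\ast$, and (H1) applied to $\varphi_{i,1}=1$ forces $\psi_i=\psi_1|_{\varphi(A)}$ for every $i\in K$; the remaining instances of (H1) hold automatically because all the maps are scalar multiplications, which commute. It then remains to analyse (H2). Using that each $c_{i,j}$ is determined by $c_{i,1}$ and $c_{j,1}$ (Lemma~\ref{lem6:subdir}) one checks that all the equations (H2) with $i,j\in K$ and with $i=1$ are consequences of those with $j=1$; the latter read $u\,c_{i,1}=c'_{\sigma i,1}+d_i-\varphi(d_1)$ with $d_i\in\varphi(A)$ and $d_1\in A$, and since the correction term lies in $\varphi(A)$ this is solvable in the $d$'s precisely when
\[
u\,c_{i,1}\equiv c'_{\sigma i,1}\pmod{\varphi(A)}\qquad(i\in K).
\]
Thus $\mathcal A\cong\mathcal A'$ if and only if there are a unit $u$ and a permutation $\sigma$ of $K$ with $u\,c_{i,1}\equiv c'_{\sigma i,1}$ modulo $\varphi(A)=p^k\Z_{p^s}$.

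Next I would convert this ``single scalar $u$'' condition into the displayed cross-product condition. One direction is immediate: if $u\,c_{i,1}\equiv c'_{\sigma i,1}$ for all $i$, then $c_{i,1}c'_{\sigma j,1}-c_{j,1}c'_{\sigma i,1}\equiv u(c_{i,1}c_{j,1}-c_{j,1}c_{i,1})=0$. Conversely, the cross-product relations express that the two families $(c_{i,1})_{i\in K}$ and $(c'_{\sigma i,1})_{i\in K}$ are proportional; to recover the scalar I would use indecomposability, which forces $C\cup\varphi(A)$ to generate $A$ and hence at least one $c_{i_0,1}$ to be coprime to $p$, i.e. a generator of $A/\varphi(A)$. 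Fixing such an $i_0$ and setting $u:=c_{i_0,1}^{-1}c'_{\sigma i_0,1}$ yields $c'_{\sigma i,1}\equiv u\,c_{i,1}$ for all $i$; since both families generate $A/\varphi(A)$ each contains a unit, so proportionality forces $u$ itself to be a unit, completing the equivalence.

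Finally, the count $n\le\lceil s/k\rceil$ is where the two clauses genuinely interact, and this is the step I expect to be the main obstacle. Here I would exploit that $\varphi=p^ka$ is nilpotent of index $\lceil s/k\rceil$ and that $A/\varphi(A)$ is cyclic of order $p^k$: the admissible constants occupy distinct cosets (Lemma~\ref{lem8:subdir}), so reading the minors $c_{i,1}c'_{\sigma j,1}-c_{j,1}c'_{\sigma i,1}$ in $\Z_{p^s}$ rather than in $A/\varphi(A)$ is governed by the $p$-adic valuations of the constants, and the representatives themselves may be shifted within their cosets by Lemma~\ref{lm11:subdir}. The delicate point is to show that when $n$ does not exceed the nilpotency index $\lceil s/k\rceil$ the scalar $u$ can always be realised, so that the proportionality is witnessed integrally and the meshes are forced to be isomorphic, whereas once $n$ exceeds this bound the valuations must spread out enough that integral proportionality becomes a genuine obstruction, captured exactly by the vanishing of those minors for a suitable $\sigma$. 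Carefully tracking these valuations, exploiting the freedom in the representatives, and verifying that the threshold is precisely $\lceil s/k\rceil$ is the technical heart of the argument.
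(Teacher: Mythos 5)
Your reduction coincides with the paper's up to its midpoint: you invoke Theorem~\ref{thm:isomorphism}, note that (H1) is automatic and that, via Lemma~\ref{lem6:subdir}, all instances of (H2) follow from those with $j=1$, so that the question becomes the existence of a permutation $\sigma$ of $K$, a unit $u$ (giving $\psi$), and elements $r_i\in\varphi(A)$ with $u\,c_{i,1}=c'_{\sigma(i),1}+r_i$, i.e. $u\,c_{i,1}\equiv c'_{\sigma(i),1}\pmod{\varphi(A)}$ for all $i\in K$. This is exactly the paper's reformulation. The gap is in everything after that. First, your ``immediate'' direction is incorrect as written: from $c'_{\sigma(i),1}=u\,c_{i,1}+p^k z_i$ one gets $c_{i,1}c'_{\sigma(j),1}-c_{j,1}c'_{\sigma(i),1}=p^k\bigl(c_{i,1}z_j-c_{j,1}z_i\bigr)$, which vanishes only modulo $p^k$, whereas the theorem asserts exact vanishing in $\Z_{p^s}$. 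The congruence-proportionality condition and the exact minor condition are genuinely different (note, for instance, that shifting a constant within its coset of $\varphi(A)$, which by Lemma~\ref{lm11:subdir} does not change the isomorphism class, does change the minors by multiples of $p^k$), so no argument that works only modulo $\varphi(A)$ can produce the stated criterion.

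Second, and decisively, the dichotomy at $n\leq\lceil s/k\rceil$ --- which you correctly identify as ``the technical heart'' --- is left entirely unproved: your final paragraph is a strategy sketch about $p$-adic valuations, not an argument. The paper settles this point by a concrete piece of linear algebra that your proposal never reaches: it parametrizes the unit as $y+p^lb$ and rewrites the congruences as the system \eqref{eq:cyclic} of $n$ linear equations over $\Z_{p^s}$ in the $n+2$ unknowns $y,b,x_2,\dots,x_{n+1}$, then compares $rk(B)$ with $rk(B|C)$. Strict $(m+1)$-reductivity, $m=\lceil s/k\rceil$, together with the existence of a constant outside $\varphi(A)$, yields $rk(B)=rk(B|C)=n$ when $n\leq m$, so a homology always exists; when $n>m$ one has $rk(B)=m$ and consistency of the augmented system is what produces the exact vanishing of the minors $c_{i,1}c'_{\sigma(j),1}-c_{j,1}c'_{\sigma(i),1}$. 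Without carrying out this (or an equivalent) solvability analysis over $\Z_{p^s}$, neither implication of the theorem is established by your proposal.
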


\begin{proof}
By Theorem \ref{thm:isomorphism}, two indecomposable affine meshes
over the same index set $I$ are isomorphic if and only if the meshes
are homologous. Hence, to show that
the meshes $\mathcal{A}$ and $\mathcal{A}'$ are isomorphic it is enough to check
the condition (H2) only for constants $c_{i,1}\in A$, $i\in I$,
(the condition (H1) is trivially satisfied). So, we have to check whether
there are a permutation $\sigma$ of the set $K$,
a group isomorphism $\psi:A\to A$ and constants $d_1\in A$ and
$d_i\in A_{\sigma(i)}$ such that for every $i\in K$,
\begin{align*}
\psi(c_{i,1})=c'_{\sigma(i),1}+\varphi_{\sigma(i),1}(d_i)-\varphi_{1,1}(d_1)=c'_{\sigma(i),1}+d_i-\varphi(d_1).
\end{align*}
Since for $i\neq
1$, $d_i\in \varphi(A)$, then there is $a_i\in A$ such that
$d_i=\varphi(a_i)$ and $d_i-\varphi(d_1)=\varphi(a_i-d_1)\in \varphi(A)$. Therefore our problem can be reformulated in the following way: Are
there a permutation $\sigma$ of the set $K$, a group
isomorphism $\psi:A\to A$ and constants $r_i\in \varphi(A)$ such
that for every $i\in K$,
\begin{align*}
\psi(c_{i,1})=c'_{\sigma(i),1}+r_i?
\end{align*}
The condition $r\in \varphi(\Z_{p^s})$ is equivalent to the fact
that there is $z\in \Z_{p^s}$ such that $r=p^k z$. Further, each
isomorphism of the group $\Z_{p^s}$ is defined in the way: $1\mapsto
y+p^l b$ where $y\in \{1,\ldots,p^l-1\}$ and $b\in
\{0,\ldots,p^l-1\}$.

Hence, the problem reduces to the question about existing solutions
of the following system of $n$ linear equations:
\begin{align}\label{eq:cyclic}
c_{i,1}y+c_{i,1}p^l b+p^kx_i=c'_{\sigma(i),1},
\end{align}
with $2\leq i\leq n+1$ and $(n+2)$ unknowns: $y\in
\{1,\ldots,p^l-1\}$, $b\in \{0,\ldots,p^l-1\}$ and
$x_2,\ldots,x_{n+1}\in \Z_{p^s}$, for some permutation $\sigma$ of the
set $K$.

Let $B=\left(\begin{smallmatrix}
c_{2,1}& c_{2,1}p^l & p^k    & 0      &\ldots & 0\\c_{3,1}&c_{3,1}p^l&0&p^k&\ldots&0\\
\vdots &\vdots      & \vdots & \vdots &\vdots & \vdots\\
c_{n+1,1}&c_{n+1,1}p^l&0&0&\ldots&p^k\end{smallmatrix}\right)$ and
$C=\left(\begin{smallmatrix}
c'_{\sigma(2),1}\\
c'_{\sigma(3),1}\\
\vdots\\
c'_{\sigma(n+1),1}
\end{smallmatrix}\right)$.

The system \eqref{eq:cyclic} is solvable if and only if
$rk(B)=rk(B|C)$ where $rk$ denotes the rank of a matrix. Let $m=\left\lceil \frac{s}{k} \right\rceil$.
Since the sums $\mathcal{A}$ and $\mathcal{A}'$ are strictly
$(m+1)$-reductive and there is $i\in K$ such that $c_{i,1}\notin
\varphi(\Z_{p^s})$, then in the case $n\leq m$,
$rk(B)=rk(B|C)=n$
and the system
\eqref{eq:cyclic} always has a solution.

On the other hand, if $n> m$, then $rk(B)=m$. In this case, the
system has a solution if and only if there is a permutation $\sigma$ of
the set $K$ and
$c_{i,1}c'_{\sigma(j),1}-c_{j,1}c'_{\sigma(i),1}=0$ for any $i,j\in
K$.
This completes the proof.
\end{proof}

\begin{exm}
Using Theorem \ref{thm2:cyclic} it is easy to check that the quandles
$\siq(\Z_{49},43,\{1,3,4\})$ and $\siq(\Z_{49},43,\{2,5,6\})$
are not isomorphic. But their lattices of congruences are (to
compute the lattice of congruences we used \cite{FK}).

On the other hand, for each  $\Z[t,t^{-1}]$-module $\Z_{p^s}$, for a prime power $p^s$, such that for $\varphi=1-t$, $\varphi^2(\Z_{p^s})=0$ and $|\Z_{p^s}/\varphi(\Z_{p^s})|\geq 3$,
there are exactly two non-isomorphic
subdirectly irreducible medial quandles with 3 orbits, namely
$\siq(\Z_{p^s},1-\varphi,\{0,1\})$ and $\siq(\Z_{p^s},1-\varphi,\{1,c\})$ where $c\in \Z_{p^s}\setminus \varphi(\Z_{p^s})$.
\end{exm}

\section{Classification of SI 2-reductive or involutory medial quandles}\label{sec:classification}

According to Theorems~\ref{all_si} and~\ref{thm1:subdir}, there are four distinct classes of SI medial quandles.

\begin{theorem}\label{thm:all_class}
Let $Q$ be a subdirectly irreducible medial quandle. Then
$Q$ falls within one of the following four disjoint classes:
\begin{itemize}
\item $Q$ is latin (a finite or infinite cancellative Alexander quandle);
\item $Q$ is connected infinite quasi-reductive (a non-cancellative Alexander quandle);
\item $Q$ is reductive (a finite or infinite one);
\item $Q$ is non-connected infinite quasi-reductive (but non-reductive).
\end{itemize}
\end{theorem}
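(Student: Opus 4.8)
The plan is to reorganize the dichotomy of Main Theorem~\ref{all_si} along the primary split \emph{connected versus non-connected}, refining it by cancellativity, finiteness, and reductivity, each distinction being governed by a result already available. Two facts drive everything. First, a latin quandle is connected: if $R_a$ is bijective then for any target $b$ there is $c$ with $c\ast a=b$, i.e. $L_c(a)=b$, so $a,b$ lie in one $\lmlt Q$-orbit. Second, by the alternative definition of quasi-reductivity, a quandle is quasi-reductive exactly when $\theta=\pi\cap\lambda$ is non-trivial, and for an Alexander quandle $(A,f)$ this reads $\Ker(1-f)\neq\{0\}$, i.e. failure of cancellativity; hence \emph{latin} and \emph{quasi-reductive} are mutually exclusive.

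For coverage I would first note that the one-element quandle is not subdirectly irreducible, and that $|Q|=2$ forces the two-element projection quandle, which is $1$-reductive, hence reductive (class three). If $|Q|>2$, Main Theorem~\ref{all_si} gives that $Q$ is latin or quasi-reductive. If $Q$ is connected it is an Alexander quandle $(A,f)$ with $1-f$ surjective; when $\Ker(1-f)=\{0\}$ it is cancellative, i.e. latin (class one), and when $\Ker(1-f)\neq\{0\}$ it is non-cancellative, so $A$ is infinite (a surjective non-injective endomorphism needs an infinite group) and, by Remark~\ref{prop:non-connected}, non-reductive (class two). If $Q$ is non-connected then by the first fact it is not latin, hence quasi-reductive, and by Theorem~\ref{thm:KK} of set type; when finite it is reductive by Theorem~\ref{finite_si} (class three), and when infinite it is either reductive (class three) or not (class four). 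This exhausts all cases.

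Disjointness I would read off three observations, namely: cancellative (latin) versus non-cancellative (quasi-reductive) separates class one from classes two, three and four; by Remark~\ref{prop:non-connected} a non-trivial reductive medial quandle is neither connected nor cancellative, so class three is disjoint from classes one and two and from its non-reductive counterpart class four; and the connected/non-connected distinction separates class two from class four. Checking the six pairs then gives that no quandle lies in two classes: class one is cancellative while classes two, three, four are not; classes two and three are disjoint because reductive excludes connected-non-trivial; classes two and four split by connectedness; classes three and four split by reductivity.

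I expect the only real work to be the boundary bookkeeping rather than any single hard step: confirming that a connected quasi-reductive SI medial quandle is automatically infinite and non-reductive, that latinity of an Alexander quandle is exactly cancellativity of $1-f$, that the two-element projection quandle is correctly slotted into the reductive class, and that the trivial quandle is not counted. Each of these reduces immediately to the cited statements, so the theorem is essentially the synthesis of Main Theorem~\ref{all_si}, Theorems~\ref{thm:KK} and~\ref{finite_si}, and Remark~\ref{prop:non-connected}, with Theorem~\ref{thm1:subdir} supplying the explicit descriptions in the parenthetical remarks.
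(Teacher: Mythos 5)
Your proposal is correct and takes essentially the same route as the paper: the paper gives no separate proof of Theorem~\ref{thm:all_class}, presenting it as the synthesis of Main Theorem~\ref{all_si} with Theorems~\ref{thm:KK} and~\ref{finite_si}, Remark~\ref{prop:non-connected} and the explicit descriptions from Theorem~\ref{thm1:subdir}, which is precisely the assembly you carry out. One cosmetic remark: for a general Alexander quandle $(A,f)$ quasi-reductivity reads $\Ker(1-f)\cap\im(1-f)\neq\{0\}$ rather than $\Ker(1-f)\neq\{0\}$, but since you invoke this only in the connected case, where $\im(1-f)=A$, your argument is unaffected.
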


\begin{remark}
 We have produced examples of SI medial quandles for each of the classes.
 A family of finite SI latin quandles was given in Example \ref{ex:lat}
 and infinite SI latin quandles were presented in Example \ref{exm:Prugr1}.
 Infinite connected quasi-reductive ones were constructed in Example~\ref{exm:Prugr}.

 All non-connected quasi-reductive SI medial quandles were constructed in Theorem \ref{thm1:subdir} and Corollary \ref{cor:SI_red}.
 Examples of finite reductive SI medial quandles were described in Example~\ref{exm:sired}. Infinite
 reductive SI medial quandles are presented in Example \ref{exm:si3red}.

 Finally, infinite quasi-reductive (but non-reductive) quandles of two different types are given in Examples \ref{exm:pp} and \ref{exm:pp1}.
\end{remark}

However, in order to construct all possible SI medial quandles,
we need
the classification of subdirectly irreducible $\Z[t,t^{-1}]$-modules.
But this question is still open.
In the finite case George Bergman presented a classification (Kearnes mentions it in \cite{K91}) of the isomorphism
types of finite subdirectly irreducible modules over an arbitrary ring as duals of finite cyclic left quotient modules,
but his description is not constructive.

Nevertheless, there are some subclasses of medial quandles where we can consider $\Z$-modules instead of $\Z[t,t^{-1}]$-modules and the classification
of subdirectly irreducible $\Z$-modules is known: they are either cyclic groups~$\Z_{p^k}$ or Pr\"ufer groups~$\Z_{p^\infty}$ for a prime $p$.
Examples of such classes are $2$-reductive medial quandles since $\Z[t,t^{-1}]/(1-t)\cong \Z$, and involutory medial quandles since $\Z[t,t^{-1}]/(1+t)\cong \Z$.

According to Proposition~\ref{prop:reductive}, a medial quandle is $2$-reductive if and only if each of its orbit
is a projection quandle, i.e. an Alexander quandle $(A,1)$.
Theorem \ref{thm1:subdir} now immediately gives the known
characterization of finite SI
2-reductive medial quandles which was
presented by Romanowska and Roszkowska in~\cite{RR89}.

\begin{theorem}\cite[Theorem 3.1]{RR89}\label{thm:red_si}
A finite strictly $2$-reductive medial quandle $Q$ is subdirectly irreducible if and only if
$Q$ is isomorphic to $\siq(\Z_{p^k},1,C)$ for some $C\subseteq \Z_{p^k}$ containing at least
one generator of~$\Z_{p^k}$.
\end{theorem}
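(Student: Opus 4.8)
The plan is to specialize Corollary~\ref{cor:SI_red} to the case $m=2$ and then feed in the known list of subdirectly irreducible $\Z$-modules. Since $Q$ is strictly $2$-reductive it is in particular $2$-reductive and, not being a projection quandle, falls under the case $m=2$ of Corollary~\ref{cor:SI_red}. Thus $Q\cong\siq(A,t,C)$ where $A$ is a subdirectly irreducible $\Z[t,t^{-1}]/(1-t)$-module. Under the ring isomorphism $\Z[t,t^{-1}]/(1-t)\cong\Z$ the scalar $t$ acts as the identity, so a subset of $A$ is a $\Z[t,t^{-1}]/(1-t)$-submodule if and only if it is a $\Z$-submodule; hence $A$ is a subdirectly irreducible $\Z$-module. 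As $Q$ is finite, $A$ is finite, and since the only infinite subdirectly irreducible $\Z$-module is the Pr\"ufer group, I would conclude $A\cong\Z_{p^k}$ for some prime power $p^k$.

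Next I would exploit that $t$ acts as the identity: the endomorphism $\varphi\colon a\mapsto(1-t)\cdot a$ is then the zero map, so $\varphi(A)=\{0\}$. Consequently a transversal to $\varphi(A)$ in $A$ is all of $A$, and the generation hypothesis in Theorem~\ref{thm1:subdir}, that $C\cup\varphi(A)$ generate $A$, collapses to the requirement that $C$ generate the cyclic group $\Z_{p^k}$, i.e.\ that $C$ contain at least one generator of $\Z_{p^k}$. This identifies $Q$ with $\siq(\Z_{p^k},1,C)$ for such a $C$, which is the forward implication.

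For the converse I would simply verify that $\siq(\Z_{p^k},1,C)$, with $C$ containing a generator, meets the hypotheses of Theorem~\ref{thm1:subdir}. With $t$ acting as the identity, $\Z_{p^k}$ is a subdirectly irreducible $\Z[t,t^{-1}]$-module (its submodules are exactly the subgroups of $\Z_{p^k}$, with unique minimal nonzero submodule $p^{k-1}\Z_{p^k}$), the map $\varphi=1-t$ is the non-injective zero endomorphism, and $C\cup\varphi(A)=C\cup\{0\}$ generates $\Z_{p^k}$ because $C$ contains a generator. Theorem~\ref{thm1:subdir} then yields that $\siq(\Z_{p^k},1,C)$ is subdirectly irreducible. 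It is moreover strictly $2$-reductive: all the diagonal maps $\varphi_{i,i}$ equal $\varphi=0$, so by Proposition~\ref{prop:reductive} the quandle is $2$-reductive, while it is not a projection quandle, since any generator $i\in C$ is nonzero and induces the nontrivial left translation $b\mapsto b+i$ on $A_1=\Z_{p^k}$.

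The hard part here is essentially bookkeeping rather than a genuine obstacle: the one point that deserves care is the identification of subdirectly irreducible $\Z[t,t^{-1}]/(1-t)$-modules with subdirectly irreducible $\Z$-modules, which rests on the observation that forcing $t$ to act as the identity makes the two submodule lattices coincide, together with the elimination of the infinite Pr\"ufer group by the finiteness of $Q$.
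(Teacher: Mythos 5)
Your proposal is correct and takes essentially the same route as the paper: the paper obtains this theorem as an immediate specialization of Theorem~\ref{thm1:subdir} (equivalently, the $m=2$ case of Corollary~\ref{cor:SI_red}), using $\Z[t,t^{-1}]/(1-t)\cong\Z$ and the fact that the finite subdirectly irreducible $\Z$-modules are exactly the cyclic groups $\Z_{p^k}$. Your additional details --- that $\varphi=1-t=0$ forces the generation condition to collapse to ``$C$ contains a generator'' (which for a cyclic $p$-group is equivalent to $C$ generating), and the explicit check of strict $2$-reductivity in the converse --- simply spell out steps the paper leaves implicit.
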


Moreover, we are now able to describe all SI 2-reductive medial quandles.

\begin{theorem}\label{thm:all2red}
 All infinite subdirectly irreducible $2$-reductive medial quandles are isomorphic to
 $\siq(\Z_{p^\infty},1,C)$ where $p$ is a prime and $C$ is an infinite subset of~$\Z_{p^\infty}$.
 There are $2^\omega$ isomorphism classes of such quandles.
\end{theorem}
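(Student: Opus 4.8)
The plan is to combine the structural classification already available with the known list of subdirectly irreducible $\Z$-modules, and then to reduce the isomorphism question to an action of $\aut{\Z_{p^\infty}}$ on subsets.

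First I would pin down the shape of such a quandle. Let $Q$ be an infinite SI $2$-reductive medial quandle. Since $Q$ is infinite and SI it is not a projection quandle (the only SI projection quandle has two elements), so it is strictly $2$-reductive, and Corollary~\ref{cor:SI_red} applied with $m=2$ gives $Q\cong\siq(A,t,C)$ for some subdirectly irreducible $\Z[t,t^{-1}]/(1-t)$-module~$A$. As $\Z[t,t^{-1}]/(1-t)\cong\Z$, the module $A$ is a subdirectly irreducible abelian group, hence $A\cong\Z_{p^k}$ or $A\cong\Z_{p^\infty}$ for a prime~$p$. In this situation $\varphi=1-t=0$, so the underlying set $A\cup(\varphi(A)\times C)=A\cup(\{0\}\times C)$ has cardinality at most $|A|$ (recall $C\subseteq A$). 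Thus $Q$ infinite forces $A$ infinite, i.e.\ $A=\Z_{p^\infty}$, and the generating requirement $\langle C\cup\varphi(A)\rangle=A$ becomes simply ``$C$ generates $\Z_{p^\infty}$'', which forces $C$ to be infinite. This establishes the first assertion.

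Next I would determine when two such quandles are isomorphic. Using Remark~\ref{rem:transl} with $\varphi=0$, the mesh of $\siq(\Z_{p^\infty},1,C)$ has the single infinite orbit $A_1=\Z_{p^\infty}$, all other orbits being one-point, and its only nonzero constants are $c_{i,1}=i$ for $i\in C$. By Theorem~\ref{thm:isomorphism} an isomorphism corresponds to a homology: the index bijection $\sigma$ must fix the unique infinite orbit, $\psi_1$ is a group automorphism of $\Z_{p^\infty}$, and since $\varphi(A)=\{0\}$ no shift of constants is available. Condition (H2) then collapses to $\psi_1(i)=\sigma(i)$ for all $i\in C$, whence $\siq(\Z_{p^\infty},1,C)\cong\siq(\Z_{p^\infty},1,C')$ if and only if $\psi(C)=C'$ for some $\psi\in\aut{\Z_{p^\infty}}$. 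So the isomorphism classes correspond exactly to the orbits of generating subsets under the action of $\aut{\Z_{p^\infty}}$.

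Finally I would count, and this is where the main obstacle lies. The upper bound is immediate: there are countably many primes and at most $2^\omega$ subsets of each countable $\Z_{p^\infty}$, so at most $2^\omega$ classes. For the lower bound I would produce $2^\omega$ pairwise non-isomorphic quandles for a single fixed~$p$ by means of an automorphism invariant. Writing $O_k$ for the set of elements of order exactly $p^k$, every automorphism of $\Z_{p^\infty}$ preserves orders, so $n_k(C):=|C\cap O_k|$ is invariant under the $\aut{\Z_{p^\infty}}$-action by the criterion of the previous paragraph. Since $|O_k|=p^{k-1}(p-1)\ge 2$ for all sufficiently large $k$, I can realize any prescribed sequence with $n_k\in\{1,2\}$, keeping $n_k\ge1$ on the finitely many small levels so that every $C$ so built meets $O_k$ for all $k$ and hence generates $\Z_{p^\infty}$. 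Distinct sequences yield distinct invariants and therefore non-isomorphic quandles, and there are $2^\omega$ such sequences. Combined with the upper bound, this shows there are exactly $2^\omega$ isomorphism classes.
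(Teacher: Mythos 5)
Your proof is correct, and for the structural part it travels the same road as the paper: the paper also reduces an infinite SI $2$-reductive quandle to $\siq(\Z_{p^\infty},1,C)$ with $C$ infinite (it quotes Remark~\ref{rem:twoorb}, while you invoke Corollary~\ref{cor:SI_red} with $m=2$ --- the same machinery from Theorem~\ref{thm1:subdir}), and it also uses Theorem~\ref{thm:isomorphism} to get the criterion ``isomorphic iff $\psi(C)=C'$ for some $\psi\in\aut{\Z_{p^\infty}}$''; your verification that $\sigma$ fixes the unique infinite orbit and that (H2) collapses to $\psi_1(i)=\sigma(i)$ is just a more explicit version of what the paper asserts in one line. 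Where you genuinely diverge is the lower bound $2^\omega$: the paper fixes the concrete set $D=\{\left[\frac 1{p^k}\right]_\sim\mid k\in\mathbb{N}\}$ and argues that any $\psi\neq 1$ moves $D$ off itself, so that distinct infinite subsets of $D$ give distinct classes, whereas you use the order profile $n_k(C)=|C\cap O_k|$ as an $\aut{\Z_{p^\infty}}$-invariant and realize $2^\omega$ distinct $\{1,2\}$-valued profiles on the levels with $|O_k|\geq 2$. Your route is arguably the more robust of the two: the paper's claim that $\psi(D')\cap D=\emptyset$ for every $\psi\neq 1$ is not literally true, since an automorphism given by a $p$-adic unit $u\equiv 1\pmod{p^m}$ fixes the elements $\left[\frac 1{p^k}\right]_\sim$ with $k\leq m$; the paper's argument survives because such a $\psi$ fixes only finitely many elements of $D$ and sends all others outside $D$, so an infinite subset of $D$ still cannot be carried onto a different one, but this repair is needed, while your order-counting invariant sidesteps the delicacy entirely (automorphisms preserve element orders, full stop). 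Both arguments correctly yield exactly $2^\omega$ isomorphism classes; yours is marginally less explicit about which subsets witness them, the paper's is slightly overstated as written.
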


\begin{proof}
 Consider~$Q$ an infinite subdirectly irreducible $2$-reductive medial quandle.
 According to Remark~\ref{rem:twoorb}, we have to build it from an infinite subdirectly
 irreducible abelian group.
 Since Pr\"ufer groups are not finitely generated, there has to be infinitely many elements in~$C$.
 Any infinite subset of~$\Z_{p^\infty}$ already generates the group.

 There are $\omega$ different groups $\Z_{p^\infty}$ and each the group~$\Z_{p^\infty}$ has $2^\omega$ subsets and hence
 there are at most $2^\omega$ different subdirectly irreducible quandles.
 According to Theorem~\ref{thm:isomorphism}, two quandles $\siq(\Z_{p^\infty},1,C)$ and $\siq(\Z_{p^\infty},1,C')$
 are isomorphic if and only if there exists an automorphism~$\psi$ of~$\Z_{p^\infty}$ that sends $C$ on $C'$.
 Consider now the set $D=\{\left[\frac 1{p^k}\right]_\sim\mid k\in\mathbb{N}\}$.  Each automorphism~$\psi$ of~$\Z_{p^\infty}$ is of the form $\psi(\left[\frac 1{p^k}\right]_\sim)=\left[\frac a{p^k}\right]_\sim$, for some~$a$ coprime to~$p$.
 Thus, $\psi(D')\cap D=\emptyset$, for any subset $D'$ of $D$ and $\psi\neq 1$.
 The set~$D$ has $2^\omega$ infinite subsets and hence there are at least $2^\omega$
 medial quandles of type $\siq(\Z_{p^\infty},1,C)$ where $C\subseteq D$.
\end{proof}

A binary algebra~$Q$ is called \emph{involutory} if $L_a^2=1$, for every $a\in Q$,
i.e., if it satisfies the identity
$$x\ast(x\ast y)\approx y.$$

It is easy to show that an Alexander quandle is involutory if and only if it is~$(A,-1)$,
for some abelian group~$A$.
Now our classification
confirms the result of Roszkowska:

\begin{theorem}\cite[Theorem 4.3]{R99b}\label{thm:sym_si}
A finite involutory medial quandle $Q$ is subdirectly
irreducible if and only if $Q$ is isomorphic to one of the following quandles:
\begin{itemize}
 \item Alexander quandle $(\Z_2,1)$,
 \item Alexander quandle $(\Z_{p^k},-1)$, for an odd prime~$p$ and $k\geq 1$,
 \item $\siq(\Z_{2^k},-1,\{1\})$ or $\siq(\Z_{2^k},-1,\{0,1\})$, for some $k\geq 1$.
\end{itemize}
\end{theorem}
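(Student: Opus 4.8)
The plan is to split the analysis according to the trichotomy latin / reductive / (excluded infinite quasi-reductive) coming from Theorem~\ref{thm:all_class}, and to exploit the ring isomorphism $\Z[t,t^{-1}]/(1+t)\cong\Z$: once the $t$-action is shown to be multiplication by $-1$, the module bookkeeping collapses to abelian groups, whose finite subdirectly irreducibles are exactly the $\Z_{p^k}$.

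For the backward direction I would verify that each listed quandle is involutory and SI. The quandle $(\Z_2,1)$ is the unique two-element SI medial quandle and is trivially involutory. For $(\Z_{p^k},-1)$ with $p$ odd, multiplication by $1-(-1)=2$ is bijective, so the quandle is latin (hence connected); since $\Z_{p^k}$ is a subdirectly irreducible $\Z$-module, Theorem~\ref{thm:simodule} yields subdirect irreducibility, and $f=-1$ gives involutivity. For $\siq(\Z_{2^k},-1,C)$ with $C\in\{\{1\},\{0,1\}\}$, subdirect irreducibility is exactly Theorem~\ref{thm1:subdir} (here $\varphi=2$ is non-injective on $\Z_{2^k}$ and $C\cup\varphi(\Z_{2^k})$ generates $\Z_{2^k}$ because $1$ is a generator), while involutivity is immediate since $1+t=0$ annihilates every homomorphism and every constant of the mesh in Remark~\ref{rem:transl}.

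For the forward direction let $Q$ be a finite involutory SI medial quandle. If $Q$ is a projection quandle then $|Q|=2$ and $Q\cong(\Z_2,1)$. Otherwise, as $Q$ is finite, Theorem~\ref{thm:all_class} leaves two cases. If $Q$ is latin it is a connected Alexander quandle $(A,f)$ with $1-f$ bijective; involutivity forces $f^2=1$, i.e. $(t-1)(t+1)=0$ in the orbit module, and since $t-1=-(1-f)$ is invertible we get $t+1=0$, that is $f=-1$. Then $1-f=2$ is bijective, so $A$ has odd order, and being a subdirectly irreducible $\Z$-module it is $\Z_{p^k}$ with $p$ odd, giving the second family.

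The decisive case is $Q$ reductive and non-projection. By Corollary~\ref{cor:SI_red}, $Q\cong\siq(A,t,C)$ with $A$ a subdirectly irreducible module and $\varphi=1-t$ non-injective. Computing $L_a^2$ on the main orbit $A_1=A$ gives $t^2=1$, while computing $L_{0_i}^2$ on $A_1$ via $0_i\ast b=t\cdot b+i$ gives $t^2\cdot b+(1+t)\,i=b$, hence $(1+t)\,i=0$, so $C\subseteq\Ker(1+t)$. From $t^2=1$ we also get $(1+t)\varphi=(1+t)(1-t)=0$, so $\varphi(A)\subseteq\Ker(1+t)$. Because $C\cup\varphi(A)$ generates $A$ and $\Ker(1+t)$ is a submodule, we conclude $A=\Ker(1+t)$, i.e. $t=-1$. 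I expect this to be the main obstacle: involutivity by itself yields only $t^2=1$, and it is the interplay between the annihilation of the mesh constants and the indecomposability (generation) condition that upgrades $t^2=1$ to $t=-1$. Once $t=-1$, the module $A$ is a subdirectly irreducible $\Z$-module $\Z_{p^k}$, and non-injectivity of $\varphi=2$ forces $p=2$. Finally $\kappa=|A/\varphi(A)|=|\Z_{2^k}/2\Z_{2^k}|=2$, so Corollary~\ref{thm2:subdir} bounds the number of orbits by three; the generation condition puts a generator of $\Z_{2^k}$ into $C$, and Lemmas~\ref{lm11:subdir} and~\ref{lm10:subdir} normalise the constants to $C=\{1\}$ or $C=\{0,1\}$, producing the last family and completing the classification.
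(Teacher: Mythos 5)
Your proposal is correct and follows essentially the same route as the paper: the trichotomy of Theorem~\ref{thm:all_class} together with Corollary~\ref{cor:SI_red}, reduction to subdirectly irreducible $\Z$-modules $\Z_{p^k}$ (odd $p$ in the latin case, $p=2$ in the reductive case since $\varphi=2$ must be non-injective), and normalisation of the constants via Lemma~\ref{lm11:subdir}. The only difference is one of detail, not of method: where the paper simply invokes $\Z[t,t^{-1}]/(1+t)\cong\Z$ and the remark that involutory Alexander quandles are exactly the quandles $(A,-1)$, you explicitly upgrade $t^2=1$ to $t=-1$ in $\siq(A,t,C)$ by combining $(1+t)\varphi=0$ and $(1+t)\cdot C=0$ with the generation condition $A=\langle C\cup\varphi(A)\rangle$ --- a legitimate and indeed necessary verification that the paper leaves implicit.
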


\begin{proof}
 Finite SI abelian groups are $\Z_{p^k}$. The multiplication by $1-(-1)$ is surjective if and only
 if $p\neq 2$. This gives all SI latin medial quandles. For the reductive ones,
 it suffices to notice that $2\Z_{2^k}$ is a subgroup of index~$2$ and that the choice of
 transversal representatives is irrelevant, according to Lemma~\ref{lm11:subdir}.
\end{proof}

Moreover, we can even present all infinite subdirectly irreducible involutory medial quandles.

\begin{theorem}\label{th:inf_inv}
The only infinite SI involutory medial quandles are Alexander quandles $(\Z_{2^\infty},-1)$
and the quasi-reductive quandle $\siq(\Z_{2^\infty},-1,\{\left[\frac 0{2}\right]_\sim\})$.
\end{theorem}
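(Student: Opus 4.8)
The plan is to push the involutory hypothesis through the $\siq$-classification of quasi-reductive SI quandles (Theorem~\ref{thm1:subdir}). Because an involutory Alexander quandle is exactly $(A,-1)$, the operative ring collapses to $\Z[t,t^{-1}]/(1+t)\cong\Z$: in every affine mesh summing to an involutory quandle the diagonal maps $\varphi_{i,i}$ are multiplication by~$2$, and each orbit module is an ordinary abelian group on which $t$ acts as $-1$. By Theorem~\ref{thm:simodule} some orbit carries a subdirectly irreducible $\Z$-module $A_{i_0}$; being infinite, it must be a Prüfer group $\Z_{p^\infty}$, these being the only infinite SI abelian groups.

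The decisive feature is that $\varphi=2$ is surjective on every Prüfer group (divisibility) yet injective only for odd~$p$. Surjectivity shows no power of~$2$ vanishes, so $A_{i_0}$ is not reductive and, by Theorem~\ref{thm:reductive}, $Q$ is not reductive. By Main Theorem~\ref{all_si} the quandle is therefore latin or quasi-reductive; both quandles in the statement are quasi-reductive, each carrying $\Ker\varphi$ inside an orbit as a non-trivial block of~$\theta$. Theorem~\ref{thm1:subdir} then represents any such $Q$ as $\siq(A,-1,C)$ with $\varphi=2$ \emph{non-injective}, and on a Prüfer group this non-injectivity forces $p=2$. Thus $A_{i_0}=\Z_{2^\infty}$, whose socle is the order-$2$ subgroup generated by $\left[\frac{1}{2}\right]_\sim$ and equals $\Ker\varphi$.

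It remains to determine $C$. As $\varphi$ is onto, $\varphi(A)=A$, so $A/\varphi(A)$ is trivial and Remark~\ref{rem:twoorb} caps the number of orbits at two. The choice $C=\emptyset$ returns the connected quandle $(\Z_{2^\infty},-1)=\siq(\Z_{2^\infty},-1,\emptyset)$; since $A/\varphi(A)$ is trivial, a non-empty $C$ has exactly one element, and by the normalization in Lemma~\ref{lm11:subdir} it may be taken to be $\left[\frac{0}{2}\right]_\sim$, yielding $\siq(\Z_{2^\infty},-1,\{\left[\frac{0}{2}\right]_\sim\})$. Conversely, Theorem~\ref{thm1:subdir} guarantees that both quandles are subdirectly irreducible, with monolith block the socle of $\Z_{2^\infty}$.

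The step I expect to be the main obstacle is the non-injectivity clause of Theorem~\ref{thm1:subdir}: it is precisely this hypothesis that pins the socle-bearing module to $\Z_{2^\infty}$ and excludes the odd-prime Prüfer groups, on which $\varphi=2$ is bijective. Non-injectivity forces $p=2$, while the divisibility of~$\varphi$ caps the orbit count at two; together these isolate exactly the two listed quandles.
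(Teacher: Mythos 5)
Your route is essentially the paper's own: collapse the acting ring via $\Z[t,t^{-1}]/(1+t)\cong\Z$, invoke the fact that the Pr\"ufer groups $\Z_{p^\infty}$ are the only infinite SI abelian groups, observe that $\varphi=2$ is always surjective on them (divisibility) and injective exactly for odd~$p$, so the non-injectivity hypothesis of Theorem~\ref{thm1:subdir} pins the socle-bearing module to $\Z_{2^\infty}$; then $\varphi(A)=A$ gives $|C|\leq 1$ via Corollary~\ref{thm2:subdir}/Remark~\ref{rem:twoorb}, and Lemma~\ref{lm11:subdir} makes the choice of the single representative in~$C$ irrelevant. This is exactly the paper's two-sentence argument, expanded, and those steps are sound. (A minor ordering quibble: you assert that the SI orbit module is infinite before you have the $\siq$ representation; the quick justification is that in $\siq(A,t,C)$ a finite~$A$ forces $Q$ finite, or cite Remark~\ref{rem:twoorb}.)

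The genuine gap is the latin branch of Main Theorem~\ref{all_si}, which you dispose of with ``both quandles in the statement are quasi-reductive'' --- that sentence is not an exclusion argument. If $Q$ is latin, it is a connected Alexander quandle $(A,-1)$ with multiplication by~$2$ bijective and $A$ an infinite SI $\Z$-module, i.e. $Q\cong(\Z_{p^\infty},-1)$ with $p$ odd; and these genuinely are infinite SI involutory medial quandles --- the paper itself exhibits them in Example~\ref{exm:Prugr1}, and they are the exact infinite analogues of the quandles $(\Z_{p^k},-1)$, $p$ odd, listed in the finite classification, Theorem~\ref{thm:sym_si}. So the latin branch is non-empty and cannot be argued away; read literally, the displayed statement omits these quandles (the plural ``Alexander quandles'' indicates the intended family is $(\Z_{p^\infty},-1)$ over all primes, with $p=2$ giving the connected quasi-reductive member). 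Your treatment of the quasi-reductive half is correct, but a complete proof must close the latin case with the one remaining line ($\varphi=2$ bijective on an infinite SI abelian group yields $(\Z_{p^\infty},-1)$, $p$ odd, as in the proof of Theorem~\ref{thm:sym_si}) and amend the list accordingly, rather than pass over it in silence.
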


\begin{proof}
 The only infinite subdirectly irreducible abelian group where the multiplication by~$2$ is not a bijection
 is $\Z_{2^\infty}$. Since $2\Z_{2^\infty}=\Z_{2^\infty}$, we have $|C|=1$
 and the choice of the element in~$C$ is not important, according to Lemma~\ref{lm11:subdir}.
\end{proof}

\vskip 2mm
\paragraph*{\bf Acknowledgement}
We would like to thank David Stanovsk\'y for his valuable comments and fruitful discussion, and Keith Kearnes for giving to us details of Bergman's construction. We also wish to thank the
referee for his/her comment that each SI medial quandle of quasi-affine type is connected which allowed to complete the classification of infinite SI medial quandles.

After revised version of our paper was submitted, Joseph D.Cyr \cite{C} sent to us a direct proof that each SI medial quandle of set type is quasi-reductive (which follows from his more general results). In the paper, it follows from Theorems~\ref{thm:KK} and \ref{thm:all_class}.

\end{document}